\documentclass[a4paper, 10pt]{article}
\usepackage{latexsym}
\usepackage{amsmath}
\usepackage{amsthm}
\usepackage{amssymb}
\usepackage[all]{xy}

\setlength{\topmargin}{0in}
\setlength{\textheight}{9in}   
\setlength{\textwidth}{6in}    
\setlength{\oddsidemargin}{0.2in}    
\setlength{\evensidemargin}{0.2in}

\numberwithin{equation}{section}

\newtheorem{thm}[equation]{Theorem}

\newtheorem{lem}[equation]{Lemma}
\newtheorem{cor}[equation]{Corollary}

\theoremstyle{definition}

\theoremstyle{remark}
\newtheorem{rem}[equation]{Remark}
\newtheorem{eg}[equation]{Example}
\newtheorem{alg}[equation]{Algorithm}

\newtheorem{claim}{Claim}

\newcommand{\sk}{\vspace{\baselineskip}}

\title{Homogeneous coloured multipartite graphs}
\author{Deborah C Lockett \\
Department of Pure Mathematics, University of Leeds, \\
Leeds LS2 9JT, UK, d.lockett@outlook.com \\
and}   
\date{John K Truss \\
Department of Pure Mathematics, University of Leeds, \\
Leeds LS2 9JT, UK, pmtjkt@leeds.ac.uk, 
corresponding author, (tel. 01133435128)$^1$}{
\begin{document}
\maketitle 

\setcounter{footnote}{1}\footnotetext{Supported by EPSRC grant EP/H00677X/1.}
\newcounter{number}

\section*{Abstract}

We classify the countable homogeneous coloured multipartite graphs with any finite number of parts. By Fra\"iss\'e's Theorem this amounts to 
classifying the families ${\cal F}$ of pairwise non-embeddable finite coloured multipartite graphs for which the class $Forb({\cal F})$ of 
multipartite graphs which forbid these is an amalgamation class. We show that once we understand such families ${\cal F}$ in the quadripartite case, 
things do not become any more complicated for larger numbers of parts.

\section{Introduction}

The paper makes a contribution towards the classification of certain types of countable homogeneous structures, addressing a special case of a problem 
mentioned in \cite{cherlin}. There is now a large body of results giving classifications of countable homogeneous relational structures. For instance, 
the classes of countable homogeneous (undirected) graphs, tournaments, directed graphs, partial orders, and coloured partial orders were classified in 
\cite{lachlan1}, \cite{lachlan2}, \cite{cherlin}, \cite{schmerl}, \cite{torrezao} respectively. Cherlin suggested that a natural next step in the 
spirit of these earlier classifications would be to look at the countable homogeneous $n$-graphs for positive integers $n$, relational structures 
whose domain is the disjoint union of $n$ parts, on each of which there is an (ordinary) undirected graph, and between any two of which the edges are 
coloured by a (fixed) finite set of colours. 

Special cases of the question were addressed in \cite{jenkinson2} and \cite{rose}. The former case treated just `multipartite' graphs, being those in
which there are no edges within the parts, and only edges or non-edges in between, generalizing the usual notion of `bipartite'. In the latter, $n$ 
was taken to be 2, but arbitrary graphs on the two parts were allowed. In this case it is easy to see that if the whole structure is homogeneous, then 
so are the graphs on the two parts, which must therefore lie in Lachlan and Woodrow's list \cite{lachlan1}. Even here, the work is incomplete, and 
only certain possibilities for these two parts have so far been covered.  

More formally, an \emph{$n$-graph} is a graph on $n$ pairwise disjoint sets of vertices called \emph{parts} each of which is an ordinary graph, with 
finitely many possible edge-types between pairs of parts, which we think of as colours for these edges. Here we shall only consider the 
\emph{multipartite} case, where the parts are all null graphs. We label the parts of an \emph{$m$-partite graph} $G$ as $V_0, V_1, \ldots, V_{m-1}$. 
By a \emph{restriction} of a multipartite graph $G$ we mean an induced subgraph of $G$ whose vertex set is a union of some set of parts of $G$. For distinct $i, j$, we refer to the bipartite restriction of $G$ to two parts $V_i$ and $V_j$ as $V_iV_j$, and $E_{ij} := V_i \times V_j$ is the set of edges 
in $V_iV_j$. The case of `ordinary' bipartite graphs is the special case where there are just two colours, `joined' and `not joined'. We often use the 
shorthand notation $xy$ for an edge $(x,y)$. The \emph{edge-types} of $E_{ij}$ are given by the sets of colours $C_{ij}$, where $|C_{ij}| < \aleph_0$ 
for each distinct $i,j$. Colours are assigned to the edges by the functions $F_{ij}: E_{ij} \to C_{ij}$, which we may assume to be surjective. Let $F$ 
be the union of all of these colouring functions for each of the bipartite restrictions; so if $x \in V_i, ~y \in V_j$, then $F(x,y):= F_{ij}((x,y))$.

For $A$ a subgraph of $G$, let $V_i^A$ stand for the set of vertices of $A$ in part $V_i$, and so on. We use the convention that vertices will be labelled by a subscript to indicate which part they are in, for example $v_0 \in V_0$, $x_i \in V_i$. For a subset $A$ of the vertices of $G$, we use $\langle A \rangle$ to denote the induced subgraph on $A$, however if the meaning is clear we often just refer to $A$ itself as the subgraph. 

In model-theoretic terms, we are working in a finite relational language $L$, where we have finitely many unary relations which are required to form a partition of $V$, and other binary relations corresponding to each of the possible colours for edges between vertices in different parts (there are a finite number of these, because we only consider graphs with finitely many parts, and finitely many edge-types/colours between each pair of parts). We refer to coloured multipartite graphs in the language $L$ as \emph{$L$-graphs}.

A relational structure $M$ is said to be \emph{homogeneous} if every isomorphism between finite substructures of $M$ extends to an automorphism of 
$M$. Our ultimate aim is a classification of the countable homogeneous coloured multipartite graphs. That is, we aim to classify the countable homogeneous models of each finite language for multipartite graphs (where the language determines the number of parts, and the number of edge-types/colours for each pair of parts, as described above). 

We use the notion of \emph{substructure} as in the model-theoretic sense, that is, a substructure of $M$ is a subset of the domain of $M$ with all relations inherited from $M$ (like `induced subgraph' for graphs). An \emph{isomorphism} between $L$-structures $M_1,M_2$ is a bijection $\phi$ such that for each $k$-ary relation $R$ in $L$, and $x_1, \ldots, x_k \in M_1$, $R(x_1, \ldots, x_k)$ holds in $M_1$ if and only if $R(\phi(x_1), \ldots, \phi(x_k))$ holds in $M_2$. An \emph{embedding} of $A$ in $M$ is an isomorphism between $A$ and a substructure $A'$ of $M$. 
We refer to an isomorphism between finite substructures of $M$ as a \emph{finite partial automorphism} of $M$. 

In particular, note that if $\phi$ is a partial automorphism of the multipartite graph $G$, then the parts are fixed by $\phi$ (that is, $x \in {\rm dom} \phi \cap V_i$ if and only if $\phi(x) \in V_i$), since there is a relation in the language which determines which part each vertex is in, and this relation must be preserved by any isomorphism.

\subsection{Preliminary results} \label{prelim}

This project follows the work of Jenkinson, Seidel and Truss (see \cite{jenkinson2} which combines and extends results from \cite{jenkinson1} and 
\cite{seidel}). They mainly concentrated on the non-coloured case, that is, where each bipartite restriction is just an ordinary non-coloured graph. 
With our set-up, this actually corresponds to the case where we have (at most) two colours between each pair of parts, the `colours' being `joined' (edges) and `not joined' (non-edges). In \cite{jenkinson2}, a complete characterization of the ordinary non-coloured countable homogeneous multipartite graphs was obtained, and we aim to build on these results looking at the more general coloured case. 

In fact, coloured multipartite graphs were considered in \cite{jenkinson2}, but only for graphs with just two parts. It was found that there are only a few simple possibilities in this case, and we begin by stating this classification result for the homogeneous coloured bipartite graphs. 

\begin{thm}[\cite{jenkinson2}] \label{bipartite}
If $G$ is a countable homogeneous $C$-coloured bipartite graph where $1 \le |C| < \aleph_0$, then one of the following holds:
\begin{enumerate}
\item[(i)] $|C|=1$ and all the edges have the same colour;
\item[(ii)] $|C|=2$ and the edges of one colour are a perfect matching, and those of the other colour are its complement;
\item[(iii)] $|C| \ge 2$ and $G$ is $C$-generic.
\end{enumerate}
\end{thm}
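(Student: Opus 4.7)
The plan is to split on $|C|$. If $|C|=1$, case (i) is immediate, so assume $|C|\geq 2$ and write the parts as $V_0,V_1$. By homogeneity $\mathrm{Aut}(G)$ is transitive on each $V_i$ and on (ordered) pairs in $V_i$, so for each colour $c$ and each $i\in\{0,1\}$ the degree $d_c^i:=|N_c(v)|$ ($v\in V_i$) is a well-defined invariant, and for each distinct $v,v'\in V_i$ the quantity $j_c^i:=|N_c(v)\cap N_c(v')|$ is also constant. Since each cross-pair carries exactly one colour, $\sum_{c\in C}d_c^i=|V_{1-i}|=\aleph_0$, so at least one $d_c^i=\aleph_0$.

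The central step is a dichotomy: for every $c$ and $i$, $d_c^i\in\{1,\aleph_0\}$. To rule out an intermediate value $k=d_c^i$ with $2\leq k<\aleph_0$, writing $j=j_c^i$: the case $j=k$ means all of $V_i$ shares a common $c$-neighbourhood of size $k$, forcing $d_c^{1-i}$ to take two different values on the ``hit'' and ``missed'' portions of $V_{1-i}$ and contradicting homogeneity. For $j<k$, a sunflower-type argument on the infinite family of $k$-subsets $\{N_c(v):v\in V_i\}$ with constant pairwise intersection $j$ produces a common core $C\subseteq V_{1-i}$ with $|C|=j$; then every $w\in C$ satisfies $d_c^{1-i}(w)=\aleph_0$ while every $w\notin C$ satisfies $d_c^{1-i}(w)\leq 1$, again violating homogeneity (the boundary case $j=0$ is handled directly, since the resulting ``matching-like'' map from $V_{1-i}$ onto $V_i$ with fibres of size $k\geq 2$ produces a nontrivial $\mathrm{Aut}(G)$-invariant equivalence on $V_{1-i}$).

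Granted the dichotomy, suppose some $c$ has $d_c^0=1$. If $d_c^1=\aleph_0$, the equivalence relation on $V_0$ defined by ``$v,v'$ share their unique $c$-partner'' would be a nontrivial invariant of $\mathrm{Aut}(G)$, contradicting transitivity on pairs in $V_0$; so $d_c^1=1$ and the $c$-edges form a perfect matching. To force $|C|=2$, I would use an amalgamation obstruction in $\mathrm{Age}(G)$: if $c_2,c_3\in C\setminus\{c\}$ are distinct, the structures $A=\{u,v,w_A\}$ with $F(v,w_A)=c$, $F(u,w_A)=c_2$ and $B=\{u,v,w_B\}$ with $F(v,w_B)=c$, $F(u,w_B)=c_3$ both lie in $\mathrm{Age}(G)$, but cannot be amalgamated over $\{u,v\}$: keeping $w_A,w_B$ distinct gives $v$ two $c$-partners (violating the matching), while identifying them demands two different colours on the edge $(u,w)$. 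Since $\mathrm{Age}(G)$ has the amalgamation property, $|C|=2$ and the second colour fills the complement of the matching, yielding (ii).

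Otherwise every $d_c^i=\aleph_0$, and I expect to verify the usual one-vertex extension property of the $C$-generic: for every finite $B\subseteq G$, every $i\in\{0,1\}$, and every $f:B\cap V_{1-i}\to C$, there is $v\in V_i\setminus B$ with $F(v,b)=f(b)$ for all $b\in B\cap V_{1-i}$. Proceed by induction on $|B\cap V_{1-i}|$, adding one colour constraint at a time by using $d_c^i=\aleph_0$ to locate an auxiliary vertex of the required colour and then transporting by homogeneity. A countable structure satisfying this extension property is isomorphic to the Fra\"\i ss\'e limit of the class of all finite $C$-coloured bipartite graphs, i.e.\ the $C$-generic, giving (iii). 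The main obstacle will be the dichotomy in the second paragraph, and in particular the $j<k$ sub-case, which requires the combinatorial analysis of $k$-sets with constant pairwise intersection indicated above.
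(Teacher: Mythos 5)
The paper itself does not prove this theorem---it is quoted from \cite{jenkinson2} (Theorem~\ref{bipartite} has no accompanying proof in the text)---so there is no internal argument to compare against. Judged on its own merits, your outline contains the right structure and the degree dichotomy $d_c^i\in\{1,\aleph_0\}$ is indeed the heart of the matter, but a few points need attention. First, the line $\sum_{c}d_c^i=|V_{1-i}|=\aleph_0$ tacitly assumes both parts are infinite; since the theorem allows finite parts in case~(i), you need a short preliminary reduction showing that a finite part together with $|C|\ge 2$ already violates homogeneity, so that from then on both parts may be taken to be countably infinite. Second, in the sub-case $1\le j<k$ you appeal to ``a sunflower-type argument'' on an infinite family of $k$-sets with \emph{constant} pairwise intersection $j$: what you actually need is Deza's theorem (an infinite $\lambda$-design of $k$-sets is a sunflower), not the Erd\H{o}s--Rado sunflower lemma, and this should be stated precisely since the latter gives sunflowers only inside very large but finite families and does not by itself force a common core for the \emph{whole} family. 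The argument is correct once cited, but as written it reads like hand-waving.

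The most serious soft spot is the final paragraph. ``Adding one colour constraint at a time by using $d_c^i=\aleph_0$ to locate an auxiliary vertex of the required colour and then transporting by homogeneity'' does not quite work as a direct induction: having realized the colours on $b_1,\dots,b_n$ with some witness $v'$, nothing lets you ``transport'' $v'$ so as to additionally hit $b_{n+1}$ with the prescribed colour while holding the other $b_j$ fixed. The clean route is via Fra\"iss\'e: show by induction on $n$ that every $(n{+}1)$-star $S$ with prescribed leaf colours lies in $\mathrm{Age}(G)$, using the amalgamation property. If the prescribed colours are not all equal, choose $b_n,b_{n+1}$ with distinct colours; $S-\{b_n\}$ and $S-\{b_{n+1}\}$ are realized by induction and amalgamate over $S-\{b_n,b_{n+1}\}$, and the two new points cannot be identified (different colours to the centre), so $S$ is realized. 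If all leaf colours equal $c$, the fact that $d_c^i=\aleph_0$ directly provides an arbitrarily large monochromatic $c$-star. Once $\mathrm{Age}(G)$ is shown to be all finite $C$-coloured bipartite graphs, the uniqueness clause of Fra\"iss\'e's Theorem gives (iii). With these repairs---the finite-part reduction, the explicit appeal to Deza's theorem, and the amalgamation-based induction for genericity---your argument is sound.
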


Here, we say that the $C$-coloured bipartite graph $G$ is \emph{generic} (or \emph{$C$-generic}) if both parts are countably infinite, and for any 
finite subset $U \in V_i$ ($i = 0$ or $1$) and map $f: U \to C$, there is $x \in V_{1-i}$ such that $F(x,u) = f(u)$ for each $u \in U$. 

Note that in case (i) the parts can each be any (finite or) countable size, in case (ii) the parts must have the same countable size, and in case (iii) the parts 
are both countably infinite.

As well as being the natural starting point for the classification of general homogeneous multipartite graphs, this result is in fact also the basis 
for the classification of graphs with more parts, in view of the following result. 

\begin{lem}[\cite{jenkinson2}] \label{restriction}
Any restriction of a homogeneous multipartite graph to a subset of its set of parts is also homogeneous.
\end{lem}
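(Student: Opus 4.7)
The plan is to show directly that every finite partial automorphism of the restriction extends to a full automorphism of the restriction, by lifting to $G$, applying homogeneity of $G$, and then restricting back.

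Let $G$ be a countable homogeneous multipartite graph with parts $V_0, \ldots, V_{m-1}$, fix a subset $S \subseteq \{0, 1, \ldots, m-1\}$, and write $G' = \langle \bigcup_{i \in S} V_i \rangle$ for the restriction. I would start by taking an arbitrary finite partial automorphism $\phi$ of $G'$. Since $G'$ is a substructure of $G$ in the model-theoretic sense (all relevant relations are inherited), and since $\phi$ preserves every relation of the language that is witnessed within $G'$ (namely the unary part-relations for $i \in S$ and all binary colour-relations on edges between those parts), $\phi$ is automatically a finite partial automorphism of $G$ as well, once we notice that none of the relations omitted from $G'$ have any tuples inside $\mathrm{dom}\,\phi$ to verify.

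Next, I would apply the homogeneity of $G$: the finite partial automorphism $\phi$ of $G$ extends to an automorphism $\Phi$ of $G$. By the remark immediately preceding Section~\ref{prelim}, any automorphism of $G$ setwise fixes each part $V_i$, because the parts are definable by unary relations in $L$. Hence $\Phi$ setwise fixes $\bigcup_{i \in S} V_i$, and the restriction $\Phi \restriction \bigcup_{i \in S} V_i$ is a bijection of $G'$ to itself.

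Finally, I would verify that this restricted map is an automorphism of $G'$: the colour-relations of $G'$ are exactly the restrictions to $S$-parts of the corresponding colour-relations of $G$, which $\Phi$ preserves, and likewise for the unary part-relations indexed by $S$. So $\Phi \restriction G'$ is an automorphism of $G'$ extending $\phi$. There is no real obstacle here; the argument is essentially the observation that restriction to a union of definable subsets commutes with automorphism extension, combined with the fact that partial automorphisms of multipartite graphs respect the partition.
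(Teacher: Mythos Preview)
Your argument is correct and follows essentially the same approach as the paper's proof: lift the finite partial automorphism of $G'$ to $G$, extend by homogeneity of $G$, and restrict back using that automorphisms preserve parts. The paper's version is more terse, but the logical structure is identical.
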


\begin{proof} 
Let $G$ be a homogeneous multipartite graph, $G'$ a restriction of $G$, and $\phi$ a finite partial automorphism of $G'$. Then $\phi$ is also a finite 
partial automorphism of $G$, and since $G$ is homogeneous this extends to an automorphism $\psi$ of $G$. Since any automorphism preserves the parts, 
the restriction of $\psi$ to $G'$ is an automorphism of $G'$, which extends $\phi$. Hence $G'$ is also homogeneous. 
\end{proof}

In classifying ordinary graphs, we only work up to isomorphism. To simplify further, for homogeneous graphs we may choose to work up to anti-isomorphism (that is, taking the complement of a graph, interchanging edges and non-edges), since a graph is homogeneous if and only if its complement is homogeneous. Correspondingly, for coloured multipartite graphs we only want to work up to \emph{colour-isomorphism}.

For $k = 0,1$, let $G^k$ be a coloured $m$-partite graph on parts $V_0^k, \ldots, V_{m-1}^k$, where $F_{ij}^k: E_{ij}^k \to C_{ij}^k$ is the colouring function for the bipartite restriction $V_i^kV_j^k$ (for each distinct $i,j$). 
We say that $G^0, G^1$ are \emph{colour-isomorphic} if there is a bijection $\theta: \{0, \ldots, m-1\} \to \{0, \ldots, m-1\}$, such that there are bijections $\sigma_{ij} : C_{ij}^0 \to C_{\theta(i)\theta(j)}^1$ for each distinct $i,j$,
and bijections $\tau_i: V_i^0 \to V_{\theta(i)}^1$ for each $i \in \{0, \ldots, m-1\}$, 
such that $\sigma_{ij}(F_{ij}^0(x,y)) = F_{\theta(i)\theta(j)}^1(\tau_i(x), \tau_j(y))$ for each $x \in V_i^0, ~y \in V_j^0$. 
Informally, this says that there is an isomorphism between $G^0, G^1$ up to some relabelling of the colours.

Observe that to classify homogeneous coloured multipartite graphs, it is sufficient to work up to colour-isomorphism. Basically this says that relabelling the colours of a homogeneous multipartite graph does not affect it. 

\begin{lem} 
Let $G^0, G^1$ be colour-isomorphic multipartite graphs. Then $G^0$ is homogeneous if and only if $G^1$ is homogeneous.
\end{lem}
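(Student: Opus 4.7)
The plan is to use the symmetry of the statement: by swapping the roles of $G^0$ and $G^1$, it suffices to establish one direction, say that if $G^0$ is homogeneous then so is $G^1$. The main idea is to transport a finite partial automorphism of $G^1$ back to $G^0$ via the colour-isomorphism data $(\theta,\{\sigma_{ij}\},\{\tau_i\})$, apply homogeneity there, and then push the resulting total automorphism forward to $G^1$.

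More concretely, suppose $G^0$ is homogeneous, and let $\phi$ be a finite partial automorphism of $G^1$. I would first assemble the $\tau_i$ into a single bijection $\tau : V(G^0)\to V(G^1)$ defined by $\tau(y)=\tau_i(y)$ for $y\in V_i^0$; since $\phi$ preserves the parts of $G^1$ (as noted earlier in the paper), the conjugate $\phi':= \tau^{-1}\circ \phi\circ \tau$, restricted to $\tau^{-1}(\mathrm{dom}\,\phi)$, is a well-defined bijection between finite subsets of $G^0$ that sends $V_i^0$ into $V_i^0$. Then I would check that $\phi'$ preserves the edge-colouring on $G^0$: for $y_1\in V_i^0$, $y_2\in V_j^0$ in its domain, the defining relation $\sigma_{ij}(F_{ij}^0(y_1,y_2)) = F_{\theta(i)\theta(j)}^1(\tau_i(y_1),\tau_j(y_2))$ lets me rewrite any $F^0$-colour in terms of $F^1$-colours, and since $\phi$ preserves $F^1$-colours, invoking injectivity of $\sigma_{ij}$ gives $F^0_{ij}(\phi'(y_1),\phi'(y_2))=F^0_{ij}(y_1,y_2)$. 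Hence $\phi'$ is a finite partial automorphism of $G^0$.

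By homogeneity of $G^0$, $\phi'$ extends to an automorphism $\psi'$ of $G^0$. I would then set $\psi := \tau\circ \psi'\circ \tau^{-1}$ and verify that $\psi$ is an automorphism of $G^1$ extending $\phi$: preservation of parts is immediate from the definitions of $\tau$ and $\theta$, preservation of $F^1$-colours is just the same colour-isomorphism relation read in the opposite direction (again using bijectivity of the $\sigma_{ij}$), and $\psi$ agrees with $\phi$ on $\mathrm{dom}\,\phi$ by construction. This shows $G^1$ is homogeneous, and the reverse direction follows by interchanging the roles of $G^0$ and $G^1$ (noting that $(\theta^{-1},\{\sigma_{ij}^{-1}\},\{\tau_i^{-1}\})$ witnesses that $G^1$ and $G^0$ are colour-isomorphic).

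There is no real obstacle here; the content is entirely bookkeeping around the colour-isomorphism data, with the only point requiring any care being the correct use of the relation $\sigma_{ij}(F_{ij}^0)=F_{\theta(i)\theta(j)}^1(\tau_i,\tau_j)$ to convert colour-preservation between $G^0$ and $G^1$, which works cleanly because each $\sigma_{ij}$ is a bijection and each $\tau_i$ restricts to a bijection of the corresponding parts.
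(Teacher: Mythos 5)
Your proof is correct and fills in precisely the routine ``transport of structure'' verification that the paper summarizes by stating the result is ``straightforward from the definition of colour-isomorphic''; the approach is the intended one, just written out in full.
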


\begin{proof}
Similar to Lemma 1.2 from \cite{jenkinson2}, this is straightforward from the definition of colour-isomorphic.
\end{proof}

Next we see how (as in \cite{jenkinson2}) the problem of classifying homogeneous multipartite graphs may be reduced to classifying those for which all bipartite restrictions are generic. We write $G - V_i$ (for example) to denote the set-theoretic difference.
 
\begin{lem}  \label{perfectm}
Suppose that $G$ is a multipartite graph and that parts $V_i$ and $V_j$ are related by a perfect matching. Then $G$ is homogeneous if and only if $G - V_j$ is homogeneous and the map from $G - V_j$ to $G - V_i$ induced by the perfect matching is a colour-isomorphism.
\end{lem}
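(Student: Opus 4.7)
The plan is to handle the two directions separately, writing $x'$ for the unique match of a vertex $x \in V_i \cup V_j$ (so $x \in V_i$ iff $x' \in V_j$). The forward direction uses Lemma \ref{restriction} together with the fact that any automorphism of $G$ preserves the perfect matching, while the reverse direction enlarges an arbitrary finite partial automorphism of $G$ to one closed under taking matched partners, extends it via the homogeneity of $G - V_j$, and then lifts back to $G$.

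For the forward direction, suppose $G$ is homogeneous. Then $G - V_j$ is homogeneous by Lemma \ref{restriction}, so it remains to exhibit, for each $k \ne i, j$, a bijection $\sigma_{ik}: C_{ik} \to C_{jk}$ witnessing that the map $\pi: G - V_j \to G - V_i$ (identity on $V_k$ for $k \ne i, j$; matching on $V_i \to V_j$) is a colour-isomorphism. I would define $\sigma_{ik}(c) = F(x_0', w_0)$ for any chosen pair $(x_0, w_0) \in V_i \times V_k$ with $F(x_0, w_0) = c$. Well-definedness is the key step: if $(x_1, w_1)$ is another such pair, then $\{x_0 \mapsto x_1, w_0 \mapsto w_1\}$ is a finite partial automorphism of $G$, so by homogeneity it extends to some $\psi \in \mathrm{Aut}(G)$; since automorphisms preserve the perfect matching, $\psi(x_0') = x_1'$, whence $F(x_1', w_1) = F(\psi(x_0'), \psi(w_0)) = F(x_0', w_0)$. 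Bijectivity of $\sigma_{ik}$ follows by the symmetric argument from the $V_j$ side, and no condition arises on pairs of parts in which $\pi$ is the identity.

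For the reverse direction, assume $G - V_j$ is homogeneous and the matching furnishes a colour-isomorphism with bijections $\sigma_{ik}$. Given a finite partial automorphism $\phi$ of $G$, enlarge it to $\phi_1$ by including matched partners: for each $x \in \mathrm{dom}(\phi) \cap (V_i \cup V_j)$, set $\phi_1(x') := \phi(x)'$. The only nontrivial verification that $\phi_1$ remains a partial automorphism is that for $x \in V_i$ and $w \in \mathrm{dom}(\phi) \cap V_k$ with $k \ne i, j$, the colour-iso identity gives
\[
F(x', w) = \sigma_{ik}(F(x, w)) = \sigma_{ik}(F(\phi(x), \phi(w))) = F(\phi(x)', \phi(w)),
\]
and symmetrically starting from the $V_j$ side. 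Then $\phi_1|_{G - V_j}$ extends by homogeneity to some $\psi_0 \in \mathrm{Aut}(G - V_j)$, and I would define $\psi$ on $G$ by $\psi|_{G - V_j} := \psi_0$ and $\psi(v) := \psi_0(v')'$ for $v \in V_j$. Verifying that $\psi \in \mathrm{Aut}(G)$ extends $\phi$ is a further direct application of the colour-iso identity.

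The main obstacle I anticipate is pure bookkeeping: tracking the correct $\sigma_{ik}$ (or its inverse) for each pair of parts, and checking colour preservation case by case, including the bipartite restrictions involving $V_i$ or $V_j$ themselves (where the matching edges, rather than any $\sigma$, control the verification). The conceptual content of both directions is brief; length comes almost entirely from the case split over pairs of parts.
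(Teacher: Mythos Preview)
Your proposal is correct and is precisely the argument one would expect: the paper itself does not spell out a proof, instead pointing to Lemma~1.3 of \cite{jenkinson2} and noting that the coloured case is a straightforward generalization. Your write-up is that generalization carried out in full, with the forward direction using homogeneity to show that the induced colour maps $\sigma_{ik}$ are well defined, and the reverse direction closing a finite partial automorphism under matched partners before extending inside $G-V_j$ and lifting via the matching; the residual bookkeeping you flag (pairs involving $V_i$ or $V_j$, where the matching colours rather than the $\sigma_{ik}$ govern the check) is exactly what remains and poses no difficulty.
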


\begin{proof}
This is a straightforward generalization of Lemma 1.3 from \cite{jenkinson2} to the coloured case.
\end{proof}

Thus if $G$ is a homogeneous multipartite graph with a perfect matching on $V_iV_j$, then characterizing $G$ reduces to characterizing $G - V_i$. Given this strong condition for the occurrence of perfect matchings between parts in homogeneous multipartite graphs, it suffices to concentrate on graphs without perfect matchings on any bipartite restriction.

Now we may observe that if some part $V_i$ of $G$ was finite, then each bipartite restriction of which it was a part would have to be complete in one colour. As before, characterizing $G$ reduces to characterizing $G - V_i$, so we may reduce the problem to only considering graphs with no finite parts. Also note that we may view a bipartite restriction on two infinite parts with all edges the same colour as trivially $C$-generic where $|C|=1$. So from now on we may assume that all parts are countably infinite and all bipartite restrictions are generic. We call such a countable $m$-partite graph \emph{$m$-generic}. 

Furthermore, we may assume that our homogeneous multipartite graphs are not already covered by the classification in \cite{jenkinson2}, that is, some bipartite restriction has at least three edge-types/colours.

\subsection{Fra\" iss\' e's Theorem and amalgamation classes}

Let us now give further details of some relevant model-theoretic notions. 
For this we work in a general setting, in a fixed countable relational language $L$ (and note that in the case of coloured multipartite graphs, $L$ is in fact finite). 

An \emph{amalgamation class} is a family ${\cal C}$ of finite $L$-structures which is closed under isomorphism and taking substructures, and which has 
the \emph{amalgamation property}: if $A, B_1, B_2 \in {\cal C}$ and $f_i : A \to B_i ~(i = 1,2)$ are embeddings, then $B_1, B_2$ can be 
\emph{amalgamated over $A$}, that is, there are $C \in {\cal C}$ and embeddings $g_i : B_i \to C ~(i = 1,2)$ such that $g_1 \circ f_1 = g_2 \circ f_2$. 
We say that $C$ is an \emph{amalgam} of $B_1, B_2$ over $A$. 

For a countable $L$-structure $M$, the \emph{age} of $M$, $Age(M)$, is defined to be the family of finite $L$-structures which can be embedded in $M$. 

\begin{thm}[Fra\" iss\' e's Theorem] \label{fraissethm}
If $M$ is a countable homogeneous structure in a countable language, then $Age(M)$ is an amalgamation class. Conversely, if ${\cal C}$ is an amalgamation class of finite $L$-structures, then there is a countable homogeneous $L$-structure $M$, unique up to isomorphism, with $Age(M) = {\cal C}$.
\end{thm}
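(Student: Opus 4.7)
The plan is to prove the two implications separately. For the forward direction, the closure of $Age(M)$ under isomorphism and substructure is immediate from the definition. For amalgamation, given $A, B_1, B_2 \in Age(M)$ with embeddings $f_i : A \to B_i$, I fix embeddings $g_i : B_i \to M$. The composite $g_2 \circ f_2 \circ f_1^{-1} \circ g_1^{-1}$ is a finite partial isomorphism of $M$ from $g_1(f_1(A))$ to $g_2(f_2(A))$, which by homogeneity extends to some $\psi \in \mathrm{Aut}(M)$. Replacing $g_1$ by $\psi \circ g_1$ yields embeddings of $B_1$ and $B_2$ into $M$ that agree on $A$, and the union of their images is a finite substructure of $M$, hence an amalgam lying in $Age(M)$.

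For the converse I would build $M$ as the union of a chain $M_0 \subseteq M_1 \subseteq \cdots$ of finite members of $\mathcal{C}$ via a bookkeeping argument that interleaves two kinds of obligation. Of type (a): for each $A \in \mathcal{C}$ in a fixed enumeration, arrange that $A$ embeds into some $M_n$, which reduces to jointly embedding $A$ and the current $M_n$ in $\mathcal{C}$ — a consequence of amalgamating over the empty substructure, or over a minimal common substructure if $\mathcal{C}$ lacks the empty one. Of type (b): for each finite partial isomorphism $\phi : U \to V$ between substructures of some $M_n$ and each point $a \in M_n \setminus U$, adjoin a new point $b$ so that $\phi \cup \{(a,b)\}$ remains a partial isomorphism, by amalgamating $M_n$ with $\langle U \cup \{a\}\rangle$ over $U$, using $\phi$ to identify $U \subseteq \langle U \cup \{a\}\rangle$ with $V \subseteq M_n$. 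Dovetailing both types across all countably many triples $(n, \phi, a)$ and all $A \in \mathcal{C}$ produces $M$ with $Age(M) = \mathcal{C}$ and the one-point extension property, which lifts to full homogeneity by a standard back-and-forth on the countable structure $M$.

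Uniqueness is then obtained by a further back-and-forth between two such structures $M, M'$: at each step, a finite partial isomorphism is extended by one more element from either side, using the fact that the relevant one-point extension of the domain occurs inside a finite structure in the common age $\mathcal{C}$, and so is realised in the target by the one-point extension property. The hard part is really the bookkeeping in the construction of $M$ — the two families of obligations must be interleaved so that each is satisfied in the limit, while ensuring that the growing finite structure never escapes $\mathcal{C}$. It is precisely in checking that each extension step can be carried out within $\mathcal{C}$ that the amalgamation property is indispensable, so the construction is a careful inductive application of amalgamation guided by a diagonal enumeration.
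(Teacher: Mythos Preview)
The paper does not prove this theorem; it is stated as a classical background result and invoked without proof. Your sketch is the standard argument and is correct: for the forward direction you align the two embedded copies of $A$ inside $M$ via homogeneity, and for the converse you build $M$ as a union of a chain in $\mathcal{C}$ with dovetailed obligations ensuring both $Age(M) = \mathcal{C}$ and the one-point extension property, then appeal to back-and-forth for homogeneity and uniqueness.

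Two small remarks. First, in the purely relational setting of the paper the empty structure is a substructure of every member of $\mathcal{C}$, so the joint embedding property does follow from amalgamation over $\emptyset$ as you say; in more general formulations of Fra\"iss\'e's Theorem this is a separate hypothesis. Second, your construction implicitly needs $\mathcal{C}$ to have only countably many isomorphism types so that the bookkeeping enumeration is possible; in the paper's context the language is finite, so this is automatic, but it is worth noting since the theorem as stated only assumes a countable language.
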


Now let us return to the specific case where $L$ is a language for coloured multipartite graphs, and let $G, A$ be $L$-graphs. If a finite graph $A$ can be embedded in $G$ then we say it is \emph{realized} in $G$, otherwise it is \emph{omitted} (or \emph{forbidden}). 
Futhermore, $A$ is \emph{minimally omitted} if it is omitted and every proper induced subgraph of $A$ is realized in $G$. 
For $G$ a homogeneous $m$-generic graph, $O(G)$ is defined to be the class of finite graphs minimally omitted from $G$. 

\begin{lem} \label{forbfam}
Suppose $G_0, G_1$ are countable homogeneous multipartite graphs in the same language $L$. Then $G_0, G_1$ are isomorphic if and only if they minimally omit the same class of finite multipartite $L$-graphs. 
\end{lem}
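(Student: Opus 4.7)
The plan is to reduce the statement to Fra\"iss\'e's Theorem (Theorem \ref{fraissethm}), by showing that $Age(G_0) = Age(G_1)$ if and only if $O(G_0) = O(G_1)$.

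The forward direction is immediate: if $G_0 \cong G_1$, then the two structures have the same age, hence the same class of omitted finite $L$-graphs, hence a fortiori the same class of minimally omitted ones.

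For the converse, assume $O(G_0) = O(G_1)$, and suppose, for a contradiction (or by symmetry), that some finite $L$-graph $A$ lies in $Age(G_0) \setminus Age(G_1)$. So $A$ is omitted from $G_1$. The key observation is that the collection of induced subgraphs of $A$ which are omitted from $G_1$ is non-empty (it contains $A$) and finite, since $A$ is finite. Choose an element $B$ of minimal size in this collection. Then $B$ is omitted from $G_1$, but every proper induced subgraph of $B$ is realized in $G_1$ (otherwise it would contradict the choice of $B$). Hence $B \in O(G_1) = O(G_0)$, so $B$ is omitted from $G_0$. But $B$ embeds as an induced subgraph of $A$, and $A$ embeds in $G_0$, so $B$ embeds in $G_0$, contradiction. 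Thus $Age(G_0) \subseteq Age(G_1)$, and by symmetry equality holds.

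Fra\"iss\'e's Theorem then yields $G_0 \cong G_1$, since countable homogeneous $L$-structures with the same age are isomorphic. I do not expect any real obstacle here: the only subtle point is the minimal-subgraph argument, which relies on $A$ being finite so that a minimal omitted induced subgraph of $A$ exists; once one spots this, the rest is routine bookkeeping. No properties of coloured multipartite graphs beyond being a relational class closed under induced substructure are needed.
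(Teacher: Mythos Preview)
Your proof is correct and is precisely the standard Fra\"iss\'e argument; the paper itself gives no details but simply cites the corresponding lemma in \cite{jenkinson2}, whose proof is exactly the one you have written out.
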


\begin{proof}
This is a straightforward generalization of Lemma 1.4 from \cite{jenkinson2}.
\end{proof}

We use $Forb({\cal F})$ to denote the class of all finite $L$-graphs which omit all members of a family ${\cal F}$ of finite $L$-graphs. Classifying 
the countable homogeneous coloured multipartite graphs then amounts to describing all families of pairwise non-embeddable $L$-graphs ${\cal F}$ for 
which $Forb({\cal F})$ is an amalgamation class. For if we note that for any $L$-graph $G$, $Forb(O(G)) = Age(G)$ (as one sees by considering for 
finite $A \not \in Age(G)$, a minimally omitted subgraph of $A$), Fra\" iss\'e's Theorem says: if $G$ is a countable homogeneous $L$-graph, then 
$Forb(O(G))$ is an amalgamation class; and conversely, if ${\cal F}$ is a family of finite $L$-graphs such that $Forb({\cal F})$ is an amalgamation 
class, then there is a countable homogeneous $m$-generic $L$-graph $G$ (unique up to isomorphism) with $Age(G) = Forb({\cal F})$ (and if the members 
of ${\cal F}$ are pairwise non-embeddable, then $O(G) = {\cal F}$).

To verify that a class of $L$-graphs is an amalgamation class, it will suffice to show that we can always perform `two-point amalgamations' (since we 
can then simply repeat a finite number of times to amalgamate more points). That is, we can always amalgamate $B_1, B_2$ over $A$ for $B_1, B_2$ which 
each have only one more point than $A$. In practice we also usually assume that $A$ is a substructure of $B_1$ and $B_2$, and that $A = B_1 \cap B_2$,
which can always be achieved by taking isomorphic copies.

\section{Non-monic realization}

From now on, we work in a fixed finite language $L$ for coloured $m$-partite graphs, and let $G$ be a countable homogeneous 
$m$-generic $L$-graph. In this section we show that, as in the ordinary non-coloured case in \cite{jenkinson2}, we are able to 
deduce that all members of $O(G)$ are monic, where a multipartite graph is said to be \emph{monic} if it has at most one 
vertex in each part. Correspondingly, a multipartite graph is \emph{non-monic} if there is some part in which it has more than 
one vertex. Note that a monic graph only has one edge in each bipartite restriction on which it is defined. To describe the 
colours of the edges in a monic graph $A$, we extend the notation for the colouring function. So if 
$v_i \in V^A_i, ~v_j \in V^A_j$, then we define $F_A(E_{ij}^A):= F(v_iv_j)$, which is often just written $F(E_{ij})$ provided 
$A$ is clear. Thus we suppress mention of $A$ when no confusion arises, both as a subscript on the map $F$, and as a
superscript on the edge set $E_{ij}$ (which in the restriction only has one member).

\begin{rem} \label{A>2}
We may assume that each minimally omitted graph is defined on at least three parts. For if there was some $A \in O(G)$ with $|A|=2$, then this corresponds to having one fewer colour in the language $L$ on this bipartite restriction of $G$. So we may assume that there is no such minimally omitted graph of size two, and all colours appear. By Lemma~\ref{restriction}, each bipartite restriction ${V_iV_j}$ is homogeneous, and so by   Theorem~\ref{bipartite} it is generic. Since the age of a countable generic graph is the family of all finite graphs in the corresponding language, there are no omitted graphs defined on only two parts. 
\end{rem}

The `non-monic realization theorem' is as follows:

\begin{thm} \label{nonmonic}
If $G$ is a homogeneous $m$-generic $L$-graph, then each $A \in O(G)$ is monic.
\end{thm}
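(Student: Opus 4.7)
Plan: The natural approach is to leverage the amalgamation property of $Age(G)$, together with the fact that a multipartite graph has no binary relations between vertices in the same part. Suppose for contradiction that some $A\in O(G)$ is non-monic, and pick distinct vertices $u,v$ in a common part $V_0^A$. Set $B=A-\{u,v\}$, $A^-=A-\{u\}$, and $A^*=A-\{v\}$; by the minimality of $A$ in $O(G)$, each of $B,A^-,A^*$ lies in $Age(G)$.

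First I would realize $A^-$ in $G$ as $B'\cup\{v'\}$, with $B'\cong B$ and $v'\in V_0$ playing the role of $v$. By the homogeneity of $G$, an isomorphism between any two copies of $B$ inside $G$ extends to an automorphism of $G$; applying this to a realization of $A^*$ with its $B$-part mapped to $B'$, I obtain a vertex $u'\in V_0$ playing the role of $u$. The crucial observation is that, because $G$ is multipartite, the induced substructure of $G$ on $B'\cup\{u',v'\}$ is fully determined by: the structure on $B'$, the colour pattern from $v'$ to $B'$ (inherited from $A^-$), the colour pattern from $u'$ to $B'$ (inherited from $A^*$), and the complete absence of any binary relation between $u'$ and $v'$ (both lying in $V_0$). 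Provided $u'\ne v'$, this substructure is therefore isomorphic to $A$, which places $A\in Age(G)$ and contradicts $A\in O(G)$.

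The entire proof thus reduces to guaranteeing $u'\ne v'$. If the colour patterns of $u$ and $v$ to $B$ inside $A$ differ, this is immediate, because $v'$ and $u'$ must carry these distinct patterns to $B'$. The main obstacle is the \emph{twin case}, in which $u$ and $v$ carry identical colour patterns to $B$ in $A$, since then the homogeneity argument above could a priori force $u'=v'$. To defeat this case I would instead apply the amalgamation property in $Age(G)$ to two copies of $A^-$ over $B$: in the twin case, an amalgam that keeps the two copies of $v$ distinct is already isomorphic to $A$, so the only obstruction to concluding $A\in Age(G)$ would be that the amalgam must collapse those copies.

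Ruling out this collapse is where I expect the real work to lie. Here I would exploit the $m$-genericity of $G$ together with Remark~\ref{A>2}, which guarantees that $A$ involves at least three parts and hence $B$ meets at least two parts $V_i$ with $i\ne 0$. The genericity of each bipartite restriction $V_0V_i$ produces, for each $i>0$ individually, infinitely many vertices in $V_0$ realizing any prescribed pattern to the finite set $B'\cap V_i$; by combining these witnesses with homogeneity of $G$ (to align patterns across the different $V_i$), one should extract a second vertex in $V_0$, distinct from $v'$, realising the joint pattern of $v$ over $B'$. That vertex then plays the role of $u'$ and yields the required embedded copy of $A$, completing the contradiction.
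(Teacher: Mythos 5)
Your non-twin branch is sound and is in fact the paper's Lemma~\ref{monobip}: amalgamating $A-\{u\}$ and $A-\{v\}$ over $A-\{u,v\}$, then using the differing colour patterns to $B$ to rule out identification of the two images. But observe what that lemma delivers: every bipartite restriction of a minimally omitted non-monic is monochromatic, and hence any two vertices of $A$ in a common part are automatically twins. Your ``main obstacle'' is therefore not one case among two; after the easy reduction it is the \emph{only} case, and your argument for it has a real gap. Bipartite $V_0V_i$-genericity supplies, for each $i$ separately, vertices of $V_0$ with a prescribed pattern over $B'\cap V_i$; it gives no mechanism to merge a witness over $B'\cap V_1$ with a different witness over $B'\cap V_2$ into a single vertex with both patterns, and homogeneity only extends isomorphisms between finite substructures that already embed in $G$ --- it cannot manufacture a vertex realising a hitherto unrealised joint type. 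Worse, the statement you need --- that whenever $B'\cup\{v'\}$ is realized there is a further $u'\ne v'$ in $V_0$ with the same pattern over $B'$ --- is exactly the assertion that the twin non-monic $B'\cup\{u',v'\}$ is realized once all its monic subgraphs are, i.e.\ it is an instance of Corollary~\ref{monic}, which is equivalent to the theorem. The proposal is therefore circular at the decisive step.

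The paper escapes this by arguing colour-combinatorially rather than trying to find a generic twin. Having reduced to the case where $A$ is monochromatic in a colour $\alpha$, it introduces the monics $B^{cd}$ (agreeing with $A$ except for a $c$-coloured $E_{01}$-edge and $d$-coloured $E_{12}$-edge), and proves that for each $d\neq\alpha$ at most one $c$ makes $B^{cd}$ realizable (Lemma~\ref{uniquemonic}) while some $B^{c\alpha}$ with $c\neq\alpha$ \emph{is} realized (Lemma~\ref{monicrealized}). It then adjoins to $A$ a deliberately non-twin vertex $x_0\in V_0$ whose edges to $V_1$ are $\beta$-coloured ($\beta\neq\alpha$), leaves the colour of one edge $a_1a_2$ undetermined, realizes two slices of this configuration using the minimality of $m$ and $|A|$ (together with Lemma~\ref{monobip}), and amalgamates: whichever colour the amalgam forces onto $a_1a_2$ either produces a copy of $A$ or produces copies of both $B^{\alpha d}$ and $B^{\beta d}$, contradicting Lemma~\ref{uniquemonic}. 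The indispensable new ingredient --- an extraneous, distinguishable $V_0$-vertex plus colour counting on the $B^{cd}$ --- is exactly what your genericity step fails to substitute for.
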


The theorem is proved in the next subsection. Following this is an explanation of how the theorem is sufficient to give an effective (if not explicit) solution to the problem of determining all the countable homogeneous $m$-partite $L$-graphs in some specified language $L$.

\subsection{Proving the non-monic realization theorem}

We move towards proving Theorem~\ref{nonmonic}. 
First, we may easily show that for each member of $O(G)$, every bipartite restriction has edges of only one colour.

\begin{lem} \label{monobip}
If $G$ is a homogeneous $m$-generic $L$-graph and $A \in O(G)$, then each bipartite restriction of $A$ is monochromatic.
\end{lem}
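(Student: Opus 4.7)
The plan is to argue by contradiction: assume some bipartite restriction of $A$ carries edges of at least two colours, and derive that $A$ is in fact realized in $G$. Suppose $V_i^A V_j^A$ is not monochromatic. A first combinatorial observation is that, after possibly interchanging the roles of $V_i$ and $V_j$, there must exist $x \in V_i^A$ and two vertices $y_1, y_2 \in V_j^A$ with $F(x, y_1) \neq F(x, y_2)$. For if no such $x$ existed relative to $V_j^A$ and no analogous vertex existed relative to $V_i^A$, then $F(x,y)$ would depend neither on $x$ nor on $y$ as $(x,y)$ ranges over $V_i^A \times V_j^A$, forcing the restriction to be monochromatic.

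Fix such $x, y_1, y_2$ and set $A_0 = A - \{y_1, y_2\}$, $A_1 = A - \{y_2\}$, $A_2 = A - \{y_1\}$. Each is a proper induced subgraph of $A$, so by minimal omission all three lie in $Age(G)$. Since $G$ is homogeneous, Fra\"iss\'e's Theorem~\ref{fraissethm} makes $Age(G)$ an amalgamation class, yielding $C \in Age(G)$ together with embeddings $g_1 : A_1 \to C$ and $g_2 : A_2 \to C$ that coincide on $A_0$.

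The heart of the argument is to show that $A$ embeds in $C$. Suppose $g_1(y_1) = g_2(y_2) = z$ in $C$. Then the single edge in $C$ from $g_1(x) = g_2(x)$ to $z$ has one colour, which by the embedding $g_1$ must equal $F(x, y_1)$, and by the embedding $g_2$ must equal $F(x, y_2)$; but these two colours are distinct, a contradiction. Hence $g_1(y_1) \neq g_2(y_2)$. Since both vertices lie in part $V_j$ there is no edge between them in $C$, and their edges to the common image $g_1(A_0) = g_2(A_0)$ are exactly those prescribed by $A$. The induced substructure of $C$ on $g_1(A_0) \cup \{g_1(y_1), g_2(y_2)\}$ is therefore a copy of $A$, giving $A \in Age(G)$ and contradicting $A \in O(G)$.

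The point needing the most care is that an amalgam in Fra\"iss\'e's sense is not automatically the free amalgam: vertices of $A_1$ and $A_2$ lying outside $A_0$ could in principle be identified by $g_1, g_2$ inside $C$. The colour discrepancy at $x$ is exactly what forbids the only dangerous identification $g_1(y_1) = g_2(y_2)$, and it is this that converts the abstract existence of an amalgam into the concrete realization of $A$.
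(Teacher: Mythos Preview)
Your proof is correct and follows essentially the same approach as the paper: find a vertex $x$ in one part seeing two distinct colours to vertices $y_1,y_2$ in the other part, amalgamate $A-\{y_1\}$ and $A-\{y_2\}$ over $A-\{y_1,y_2\}$, and use the colour discrepancy at $x$ to rule out identification of $y_1$ and $y_2$. The only cosmetic difference is that the paper invokes homogeneity of $G$ directly to realize the two substructures over a common copy of $A-\{y_1,y_2\}$ inside $G$, whereas you route through Fra\"iss\'e's Theorem to amalgamate inside $Age(G)$; these are equivalent here.
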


\begin{proof}
Suppose $V_i^A V_j^A$ is not monochromatic. Then there are $x_i, y_i \in V_i^A$, $x_j, y_j \in V_j^A$ where $F(x_i, x_j) = \alpha$, 
$F(y_i, y_j) = \beta$ for distinct $\alpha, \beta \in C_{ij}$. Then we can find a vertex in one of the parts incident with edges of distinct colours. 
If this condition does not hold for $x_i$, then $F(x_i, z_j) = \alpha$ for each $z_j \in V_j^A$, and $y_j \in V_j^A$ is joined to $x_i$ by an 
$\alpha$-coloured edge, and to $y_i$ by a $\beta$-coloured edge. 

Without loss of generality, suppose there are $v_i \in V_i^A$, $u_j, v_j \in V_j^A$ with $F(v_i,u_j) = \alpha$, $F(v_i,v_j) = \beta$. Since $A$ is 
minimally omitted, $A-\{u_j\}, A-\{v_j\}$ are both realized in $G$. By the homogeneity of $G$, these can be embedded so that the embeddings agree on 
$A- \{ u_j, v_j \}$. Since $u_j, v_j$ are differently joined to $v_i$, they are not identified in the embedding, and so we have embedded $A$. This 
contradicts the assumption that $A$ is omitted. 
\end{proof}

To prove Theorem~\ref{nonmonic} we show that the assumption that there is $A \in O(G)$ which is non-monic leads to a contradiction. We first prove two lemmas, which determine when certain $m$-partite monics are realized or omitted in $G$, given this assumption. The set-up for all of these results is as follows: 
 
\begin{itemize}
\item[($*$)]
Let $G$ be a homogeneous $m$-generic $L$-graph with $m \ge 3$ as small as possible such that $O(G)$ contains non-monics. Let $A \in O(G)$ be non-monic 
with $|A|$ as small as possible, so then $A$ is clearly also $m$-partite. By Lemma \ref{monobip} $A$ is monochromatic with $\alpha$-coloured edges in 
all bipartite restrictions (where to simplify notation, we write $\alpha$ for a member of varying $C_{ij}$), and we assume that $|V_0^A|>1$. Let $a_0$ 
and $b_0$ be distinct members of $V_0^A$, and pick some $a_i \in V_i^A$ for each $i \in \{1, \ldots, m-1 \}$.
\end{itemize}

Now for $c \in C_{01}, ~d \in C_{12}$ we let $B^{c d}$ be the $m$-partite monic with $F(E_{01}) = c$, $F(E_{12}) = d$, and $F(E_{ij}) = \alpha$ for 
all other distinct $i,j$. Note that $B^{\alpha \alpha}$ is the $m$-partite monic monochromatic in $\alpha$, and this is realized in $G$ since it is a 
proper subgraph of $A$ which is minimally omitted. 

\begin{lem} \label{uniquemonic}
For each $d \in C_{12} - \{ \alpha \}$, there is at most one colour $c \in C_{01}$ such that $B^{c d}$ is realized in $G$. 
\end{lem}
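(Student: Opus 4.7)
The plan is to argue by contradiction. Suppose distinct colours $c_1, c_2 \in C_{01}$ both satisfy $B^{c_1 d}, B^{c_2 d} \in Age(G)$. I will show this forces $A \in Age(G)$, contradicting $A \in O(G)$.

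First I would amalgamate the two monics over their common $(m-1)$-partite substructure $M$, namely the monic on $V_1, \ldots, V_{m-1}$ with $F(E_{12}) = d$ and all other edges $\alpha$. Both $B^{c_i d}$ contain $M$ (by deletion of the $V_0$-vertex), so by the amalgamation property of $Age(G)$ there is an amalgam $H \in Age(G)$ with embeddings of $B^{c_1 d}, B^{c_2 d}$ agreeing on $M$. The two $V_0$-vertices cannot be identified in $H$, since they would then need to carry both colours $c_1 \ne c_2$ to the single $V_1$-vertex $u_1$. Thus $H$ has distinct $u_0, u_0' \in V_0$ and $u_i \in V_i$ for $i \ge 1$, with $F(u_0, u_1) = c_1$, $F(u_0', u_1) = c_2$, $F(u_1, u_2) = d$, and all other edges $\alpha$.

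Next I would amalgamate $H$ with the proper substructure $A - \{b_0\}$ of $A$ (which lies in $Age(G)$ because $A$ is minimally omitted), taking as the common substructure the $(m-1)$-partite monic obtained by identifying $u_0 \leftrightarrow a_0$ and $u_j \leftrightarrow a_j$ for $j \ge 2$; this identification is valid because both sides have all edges $\alpha$ on that piece. In the resulting amalgam $C \in Age(G)$, the $V_1$-vertex $a_1$ arising from $A - \{b_0\}$ is forced distinct from $u_1$, since otherwise $F(u_0, u_1)$ would be simultaneously $c_1$ and $\alpha$ (handled cleanly when $c_1 \ne \alpha$; the subcase $c_1 = \alpha$ is similar with roles interchanged). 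The induced substructure on $\{u_0, u_0', a_1, u_2, \ldots, u_{m-1}\}$ then has $\alpha$-edges everywhere except possibly for $F(u_0', a_1)$, which is the one ``free'' edge of the amalgam.

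The heart of the proof, and the main obstacle, is to force $F(u_0', a_1) = \alpha$: the amalgamation property alone only guarantees some $C \in Age(G)$, not a specific choice of this colour. I would deal with this by running the symmetric amalgamation (identifying $u_0' \leftrightarrow a_0$ instead, producing a second $V_1$-vertex $b_1$), combining both amalgamations into one larger amalgam in $Age(G)$, and analysing the resulting edge constraints. Here the minimality of $m$ (so that in $G - V_1$ every minimally omitted graph is monic, making the non-monic $(m-1)$-partite restrictions of our amalgam automatically admissible) together with the minimality of $|A|$ and the genericity of each bipartite restriction $V_iV_j$ should rule out every choice of free edges other than those producing a copy of $A$. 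When $|V_i^A| > 1$ for some $i \ge 1$, a preliminary reduction using the minimality of $|A|$ allows one to reduce to the clean case $|V_i^A| = 1$ for $i \ge 1$.
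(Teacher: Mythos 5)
Your proposal correctly identifies the target (realize $A$, contradicting $A \in O(G)$), but the step you yourself single out as the main obstacle---forcing the free edge $u_0'a_1$ to have colour $\alpha$---is left unresolved, and that gap is genuine. Running the symmetric amalgamation and combining both amalgams into a larger structure does not dispose of it: the combined amalgam introduces \emph{more} free edges, not fewer, and the amalgamation property only guarantees \emph{some} choice of colours on them, never a prescribed one. Minimality of $m$, minimality of $|A|$, and genericity of the bipartite restrictions do not by themselves constrain those free colours. Likewise, your ``preliminary reduction'' to $|V_i^A|=1$ for $i \ge 1$ is asserted without argument, and I do not see how minimality of $|A|$ yields it: deleting a vertex from some part $V_i$ with $i \ge 1$ only produces a graph that is \emph{realized} (because $A$ is minimally omitted), not a smaller non-monic in $O(G)$ that could replace $A$.

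The paper avoids the free-edge problem entirely by a different decomposition. It sets $A' := A \cup \{x_1\}$ with a new vertex $x_1 \in V_1$ whose edges to $V_0^A$ are $\gamma$ (to $a_0$) and $\delta$ (to the other $V_0^A$-vertices), to $V_2^A$ are $\beta$, and otherwise $\alpha$. It then realizes $A'$ by a chain of two-point amalgamations in which the two vertices being amalgamated always lie in the \emph{same part}: first $x_1, a_1 \in V_1$ (differently joined to $a_2$, so not identified, and there is no $V_1V_1$-edge, so the amalgam is forced to be exactly $A'-\{a_0\}$); similarly to obtain $A'-\{b_0\}$; and finally $a_0, b_0 \in V_0$ (differently joined to $x_1$, and again no edge between them, forcing the amalgam to be exactly $A'$). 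Since no stage ever creates a free cross-part edge, there is nothing left to the amalgamation's discretion. The realized monics $B^{\gamma\beta}, B^{\delta\beta}$ are used only to certify that every monic subgraph of the relevant restrictions of $A'$ is realized, which by the minimal choice of $A$ and $m$ makes those restrictions realized. This is also why no reduction to $|V_i^A| = 1$ is needed; the argument works for general $A$. When a free edge threatens to derail an amalgamation argument, look for a setup in which the two new vertices land in the same part.
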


\begin{proof}
Suppose otherwise, and let $\beta \in C_{12} - \{ \alpha \}$ and $\gamma, \delta \in C_{01}$ with $\gamma \ne \delta$ be such that $B^{\gamma \beta}, B^{\delta \beta}$ are both realized in $G$. Consider the graph $A' = A \cup \{x_1\}$ with $x_1 \in V_1$ such that $F(a_0, x_1) = \gamma$, $F(z_0, x_1) = \delta$ for each $z_0 \in V_0^A - \{a_0\}$, $F(x_1, z_2) = \beta$ for each $z_2 \in V_2^A$, $F(x_1, z_i) = \alpha$ for each $z_i \in V_i^A$ with $i \in \{3, \ldots, m-1\}$ (and the colours of edges in $A$ are unchanged, that is all other edges are coloured $\alpha$). 

Then $A' - \{ a_0, x_1 \} = A - \{a_0\} \subset A$, and $A' - \{ b_0, x_1 \} = A - \{b_0\} \subset A$ are both realized since $A$ is minimally omitted. 

Next consider $A' - \{ a_0, a_1 \}$. If this is monic, then it is a copy of $B^{\delta \beta}$ which is realized. 
Otherwise it is non-monic of size $|A' - \{ a_0, a_1 \}| = |A| - 1$. Since $A$ was chosen to be a minimally omitted non-monic of smallest possible size, if $A' - \{ a_0, a_1 \}$ is omitted, then some monic subgraph of it must be omitted. 
However, any monic subgraph which includes $x_1$ is a subgraph of $B^{\delta \beta}$ and so is realized; and any monic subgraph which does not include $x_1$ is a proper subgraph of $A$ and so is realized since $A$ is minimally omitted.
Thus $A' - \{ a_0, a_1 \}$ is realized since all monic subgraphs of it are realized. 

Similarly, consider $A' - \{ b_0, a_1 \}$. If this is monic, then it is a copy of $B^{\gamma \beta}$ which is realized. 
Otherwise it is non-monic, and again since $A$ was chosen to be a minimally omitted non-monic of smallest possible size, if $A' - \{ b_0, a_1 \}$ is omitted, then some monic subgraph of it is omitted. 
However, any monic subgraph which includes $x_1$ and $a_0$ is a subgraph of $B^{\gamma \beta}$ and so is realized; any monic subgraph which includes $x_1$ but not $a_0$ is a subgraph of $B^{\delta \beta}$ and so is realized; 
and any monic subgraph which does not include $x_1$ is a proper subgraph of $A$ and so is realized since $A$ is minimally omitted.
Thus $A' - \{ b_0, a_1 \}$ is realized since all monic subgraphs of it are realized. 

By homogeneity, $A' - \{ a_0, a_1 \}$ and $A' - \{ a_0, x_1 \}$ may be embedded in $G$ so that they agree on $A' - \{ a_0, x_1, a_1 \}$. In this embedding, $x_1, a_1$ are not identified because they are differently joined to $a_2$: observe $F(a_1, a_2) = \alpha \ne \beta = F(x_1, a_2)$. Thus $A' - \{a_0\}$ is realized in $G$. 

Similarly, by homogeneity $A' - \{ b_0, x_1 \}$ and $A' - \{ b_0, a_1 \}$ may be embedded in $G$ so that they agree on $A' - \{ b_0, x_1, a_1 \}$. Again in this embedding, $x_1, a_1$ are not identified because they are differently joined to $a_2$, so $A' - \{b_0\}$ is also realized in $G$. 

By homogeneity, $A' - \{a_0\}$ and $A' - \{b_0\}$ may be embedded in $G$ so that they agree on $A' - \{ a_0, b_0 \}$. In this embedding, $a_0, b_0$ are not identified because they are differently joined to $x_1$: observe $F(a_0, x_1) = \gamma \ne \delta = F(b_0, x_1)$. Thus $A'$ is realized in $G$. But then $A \subseteq A'$ is realized in $G$, which contradicts the assumption that it is minimally omitted.
\end{proof}

\begin{lem} \label{monicrealized}
For some $c \in C_{01} - \{ \alpha \}$, the monic $B^{c \alpha}$ is realized in $G$. 
\end{lem}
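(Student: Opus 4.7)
The plan is to argue by contradiction: suppose no $B^{c\alpha}$ with $c \in C_{01} - \{\alpha\}$ is realized in $G$, and show that this forces $A$ itself to be realized, contradicting $A \in O(G)$. The underlying idea is to embed the realized proper subgraph $A - \{a_0\}$ in $G$ and then use homogeneity to adjoin a new vertex $y_0 \in V_0$; depending on how $y_0$ interacts with the $V_1$-part, $y_0$ will either complete the embedding to a realization of $A$ or else produce a copy of $B^{c\alpha}$ with $c \ne \alpha$, a contradiction either way.

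Concretely, I fix an embedding of $A - \{a_0\}$ in $G$ with image $\tilde A$, and consider the finite $\alpha$-monochromatic graph
\[
Z = (\tilde A \setminus V_1^{\tilde A}) \cup \{y_0\},
\]
where $y_0$ is a fresh vertex in $V_0$ with $F(y_0, v) = \alpha$ for every $v \in V_j^{\tilde A}$, $j \ge 2$. The key intermediate claim is that $Z$ is realized in $G$: $Z$ is non-monic (since $V_0$ contains $y_0$ together with the $|V_0^A|-1 \ge 1$ vertices in $V_0^{\tilde A}$) and has size $|Z| = |A| - |V_1^A| < |A|$; any monic subgraph of $Z$ is an $(m-1)$-partite $\alpha$-monic, hence a substructure of the already-realized $B^{\alpha\alpha}$, and so is realized. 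Consequently, were $Z$ omitted it would contain a minimally omitted subgraph of size at most $|Z| < |A|$, which by the minimality of $|A|$ among non-monic minimally omitted graphs would have to be monic, contradicting the realization of every monic subgraph of $Z$. Hence $Z$ is realized, and by the homogeneity of $G$ the embedding of the common substructure $\tilde A \setminus V_1^{\tilde A}$ in $G$ extends to an embedding of $Z$, producing the desired $y_0 \in V_0 \setminus V_0^{\tilde A}$.

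It then suffices to inspect the colours $F(y_0, v)$ for $v \in V_1^{\tilde A}$. If all of these equal $\alpha$, then $\tilde A \cup \{y_0\}$ is an $\alpha$-monochromatic graph with the same part sizes as $A$, hence isomorphic to $A$ via $y_0 \leftrightarrow a_0$, contradicting $A \in O(G)$. If instead $F(y_0, \tilde a_1') = c \in C_{01} - \{\alpha\}$ for some $\tilde a_1' \in V_1^{\tilde A}$, then picking any $\tilde a_j \in V_j^{\tilde A}$ for $j \ge 2$, the monic $\{y_0, \tilde a_1', \tilde a_2, \ldots, \tilde a_{m-1}\}$ has $F(E_{01}) = c$ and all other bipartite edges $\alpha$, so is a realized copy of $B^{c\alpha}$ with $c \ne \alpha$, contradicting the working assumption. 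I expect the main obstacle to be the verification that $Z$ is realized, where the minimality of $|A|$ must be combined carefully with the realization of $B^{\alpha\alpha}$ and its substructures; once this is secured, the final case split is immediate.
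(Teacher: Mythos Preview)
Your proof is correct, but it takes a considerably more circuitous route than the paper's argument. The paper proceeds directly, not by contradiction: it considers $A$ with the single edge $a_0a_1$ left undetermined, notes that $A-\{a_0\}$ and $A-\{a_1\}$ are both realized (as proper subgraphs of the minimally omitted $A$), and amalgamates them over $A-\{a_0,a_1\}$ by homogeneity. The resulting colour $c$ on $a_0a_1$ cannot be $\alpha$ (else $A$ is realized), and the monic on $\{a_0,a_1,\ldots,a_{m-1}\}$ is then a realized copy of $B^{c\alpha}$.

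The contrast is instructive. You delete the whole of $V_1^{\tilde A}$ and introduce a fresh $V_0$-vertex, which forces you to verify that the auxiliary graph $Z$ is realized; this in turn requires the minimality of $|A|$ from $(*)$ to rule out small non-monic obstructions. The paper's argument avoids this entirely: by removing exactly one vertex from each of $V_0$ and $V_1$, both pieces of the amalgam are proper subgraphs of $A$ itself, so their realization follows immediately from $A$ being minimally omitted, with no appeal to the minimality of $|A|$. Your approach does work, and the verification that $Z$ is realized is handled correctly, but the paper's two-vertex amalgamation is both shorter and uses strictly weaker hypotheses.
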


\begin{proof}
Consider $A^*$ on the same vertex set as $A$, with all edges coloured as in $A$ except that $F(a_0, a_1)$ is undefined. Now 
$A^* - \{a_0\}, ~A^* - \{a_1\} \subset A$ are realized since $A$ is minimally omitted, so by homogeneity, $A^*$ is realized. Since $A$ is omitted, 
$F(a_0, a_1) = c \ne \alpha$, and so $B^{c \alpha}$is realized.
\end{proof}

We may now prove the non-monic realization theorem.

\begin{proof}[Proof of Theorem~\ref{nonmonic}]
The set-up is as described in $(*)$ above. 

First observe that by Lemma~\ref{monicrealized}, for some $c \in C_{01} -\{ \alpha \}$, the monic $B^{c \alpha}$ is realized in $G$, say $B^{\beta \alpha}$ is realized with $\beta \ne \alpha$. 
Consider the graph $H = A \cup \{x_0\}$ with $x_0 \in V_0$, such that edges in $H - \{x_0\}$ are coloured as in $A$ except that $F(a_1, a_2)$ is undetermined; $F(x_0, z_1) = \beta$ for each $z_1 \in V_1^A$, and all other edges incident to $x_0$ are $\alpha$-coloured (that is $F(x_0, z_i) = \alpha$ for each $z_i \in V_i^A$ with $i \in \{2, \ldots, m-1\})$. We show that $H - \{a_1\}$, $H - \{a_2\}$ are both realized in $G$, and so by homogeneity $H$ (with some colour determined for the edge $a_1a_2$) should be realized---but realizing $H$ gives a contradiction, as follows. 
If $F(a_1, a_2) = \alpha$ then we have realized $A$, contrary to $A \in O(G)$. 
Otherwise, if $F(a_1, a_2) = d$ where $d \ne \alpha$, then observe that $\langle a_0, a_1, a_2, \ldots, a_{m-1} \rangle \subset H$ is a copy of $B^{\alpha d}$ and $\langle x_0, a_1, a_2, \ldots, a_{m-1} \rangle \subset H$ is a copy of $B^{\beta d}$. So for $d \in C_{12} - \{ \alpha \}$, we have realized two distinct monics $B^{\alpha d}, B^{\beta d}$ in $G$, contradicting Lemma~\ref{uniquemonic}. 

So we aim to show that both of the non-monics $H_1 := H - \{a_1\}$ and $H_2 := H - \{a_2\}$ are realized in $G$. For $i = 1,2$, first note that $H_i$ 
is a non-monic the same size as $A$, but has one fewer vertex than $A$ in part $V_i$, and one more vertex than $A$ in part $V_0$. If $|V_i^A|=1$, then 
$H_i$ is not defined on all $m$ parts, and so is not omitted by the minimal choice of $G$ and $A$. Otherwise, if $|V_i^A| > 1$, then since the 
bipartite restriction to $V_0V_1$ is not monochromatic, $H_i$ is not minimally omitted by Lemma~\ref{monobip}.

No non-monic proper subgraph of $H_i$ is minimally omitted since any such subgraph has size less than $|A|$, and $A$ was chosen to be a minimally omitted non-monic of smallest possible size. 
So if $H_i$ is not realized, then some monic subgraph of it must be omitted. However, any monic subgraph which includes $x_0$ is a subgraph of $B^{\beta \alpha}$ and so is realized; and any monic subgraph which does not include $x_0$ is a proper subgraph of $A$ and so is realized since $A$ is minimally omitted. Thus $H_i$ is realized in $G$ since all monic subgraphs of it are realized. 
\end{proof}

The following is a straightforward consequence of the non-monic realization theorem (Theorem~\ref{nonmonic}), but is worth stating explicitly since it is a very useful tool for easily determining whether a given graph is realized or omitted in a given homogeneous multipartite graph. 

\begin{cor} \label{monic}
If all monic subgraphs of $H$ are realized in a homogeneous $m$-generic graph $G$, then $H$ is realized in $G$. 
\end{cor}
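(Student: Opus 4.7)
The plan is to derive this directly from Theorem~\ref{nonmonic} by a short contrapositive-style argument.

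Suppose, for contradiction, that $H$ is not realized in $G$. I would then consider a minimally omitted subgraph $H'$ of $H$, obtained by iteratively removing vertices from $H$ while keeping the resulting subgraph omitted (this process terminates since $H$ is finite, and at the bottom we certainly reach an omitted graph all of whose proper subgraphs are realized; note that an empty graph is trivially realized in any $G$, so some nonempty stage is the first to be minimally omitted). Then $H' \in O(G)$ by definition.

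By the non-monic realization theorem (Theorem~\ref{nonmonic}), every member of $O(G)$ is monic, hence $H'$ is monic. But $H'$ is a subgraph of $H$, so $H'$ is in particular a monic subgraph of $H$. By the hypothesis of the corollary, every monic subgraph of $H$ is realized in $G$, so $H'$ is realized, contradicting $H' \in O(G)$. Therefore $H$ must be realized in $G$.

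There is essentially no obstacle here: the only thing to be slightly careful about is that the ``minimally omitted subgraph of $H$'' exists and sits inside $H$ as an induced subgraph, which is immediate because $O(G)$-membership depends only on the induced substructure. Once that is noted, the argument is a one-line application of Theorem~\ref{nonmonic}.
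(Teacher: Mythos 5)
Your argument is correct and is essentially the same as the paper's: both observe that if $H$ were omitted it would contain a minimally omitted subgraph, which by Theorem~\ref{nonmonic} must be monic, contradicting the hypothesis. The paper phrases this as a chain of equivalences rather than a contradiction, but the content is identical.
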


\begin{proof}
By definition, a graph $H$ is omitted from $G$ if and only if some subgraph $H'$ of $H$ is minimally omitted from $G$. By Theorem~\ref{nonmonic}, such a subgraph $H'$ will be monic. So $H$ is omitted from $G$ if and only if some monic subgraph of $H$ is minimally omitted. Equivalently, $H$ is realized in $G$ if and only if no monic subgraph of $H$ is minimally omitted, that is, if and only if each monic subgraph of $H$ is realized in $G$. 
\end{proof}

\subsection{Effective characterization}

We remark that, once one has Theorem \ref{nonmonic}, even without precise information to explicitly describe each valid $O(G)$, we can say that, in an appropriate sense, the problem of determining all the countable homogeneous $m$-partite graphs in a specified (finite) language can be `effectively' solved. By this we mean that given $m$ and finite colour sets corresponding to all pairs of parts, there is an effective procedure that terminates in finitely many steps, and lists precisely the countable homogeneous $m$-partite coloured graphs with the given colour sets. 

As described in section~\ref{prelim}, in the first place, we can discount cases in which perfect matchings arise as bipartite restrictions. This is because we may appeal to Lemma~\ref{perfectm}. To begin with we choose a bipartite restriction where there is a perfect matching, and remove one of the parts. We now are considering the $(m-1)$-partite case, and we assume inductively that the possibilities for this have been effectively listed. Corresponding to each possibility which can arise, Lemma~\ref{perfectm} tells us how we can add back in the part which was removed to obtain a corresponding homogeneous $m$-partite graph. Similarly, we can discount cases with finite parts, because each bipartite restriction which involves a finite part must be complete in only one colour. 

Now looking at the case in which no perfect matching arises, and all parts are infinite, by Lemma~\ref{forbfam} it suffices to list all the 
possibilities $\cal F$ for $O(G)$, and by Theorem~\ref{nonmonic} there are only finitely many candidates for $\cal F$ which we need to examine. Given 
any $\cal F$, we can test effectively whether the class of finite $m$-partite graphs omitting all members of $\cal F$ is an amalgamation class, which 
is all we need to determine. As previously mentioned, it suffices to verify whether two-point amalgamations of the form $A = B_1 \cap B_2$ where 
$B_1 = A \cup \{x\}$ and $B_2 = A \cup \{y\}$ can be performed. We shall show that all two-point amalgamations can be performed if and only if all 
such amalgamations can be performed for $A$ of size at most $N = mg$, where $g$ is defined to be the greatest size of a colour set on a bipartite 
restriction. We can then explicitly list all the possibilities up to this bound, see if they can all be performed, and hence effectively test whether 
$Forb({\cal F})$ is an amalgamation class.

First suppose that a two-point amalgamation diagram $(A, B_1, B_2)$ where $B_1 = A \cup \{x\}$ and $B_2 = A \cup \{y\}$ is given. We shall show that $A$ has a substructure $A'$ of size at most $N$ such that the amalgamation $(A, B_1, B_2)$ can be performed if and only if the one on $A', A' \cup \{x\}, A' \cup \{y\}$ can. If the original amalgamation can be performed then we let $A' = \emptyset$. So suppose that the original amalgamation cannot be performed. Let $i$ and $j$ be such that $x \in V_i$ and $y \in V_j$. If $i = j$ then $B_1$ and $B_2$ certainly {\em can} be amalgamated over $A$ (by not joining $x$ and $y$). Hence $i \neq j$, and there is a finite colour set $C_{ij}$ associated with this bipartite restriction $V_iV_j$. Since $(A, B_1, B_2)$ cannot be amalgamated, for each $c \in C_{ij}$, the choice of $c$ as the colour for $xy$ must fail, and therefore there is some $A^c \in {\cal F}$ which is realized in $A \cup \{x, y\}$ on this choice. We assume that a fixed choice of such a subgraph $A^c$ of $A \cup \{x, y\}$ has been made. Then $A^c$ is defined on both $V_i$ and $V_j$, and $A^c - \{y\}$ and $A^c - \{x\}$ are subgraphs of $B_1$ and $B_2$ respectively. Let $A', B_1'$, and $B_2'$ be the induced subgraphs of $A \cup \{x, y\}$ on $\underset{c \in C} \bigcup A^c - \{x, y\}$, $\underset{c \in C} \bigcup A^c - \{y\}$, and $\underset{c \in C} \bigcup A^c - \{x\}$ respectively. Then $(A', B_1', B_2')$ is a two-point amalgamation problem in $Forb({\cal F})$, and $A'$ has size at most $N$. Furthermore, there is no amalgamation of $(A', B_1', B_2')$ in $Forb({\cal F})$, because whichever colour we try to assign to $xy$, we have included in $A \cup \{x, y\}$ a member of $\cal F$ which thereby gets realized. 

The rest of the paper is concerned with finding explicit conditions to classify the possible valid sets $O(G)$. Such conditions are needed to actually construct examples systematically, and can be used to see exactly why some sets are valid and others are not, unlike the above `effective' solution which gives no such information.

\section{Tripartite graphs}

In this section, we concentrate on the tripartite (3-partite) case. We are able to give a complete classification of the homogeneous 3-generic graphs. 

First let us introduce some further terminology. For a monic $A$, if $F_A(E_{ij}) = c \in C_{ij}$, then we say that $A$ \emph{covers} the colour $c$ on the restriction $V_iV_j$. Furthermore, for a set of monics ${\cal A}$, we say that ${\cal A}$ \emph{covers} $C_{ij}$, and call ${\cal A}$ a \emph{$C_{ij}$-cover set} if for each $c \in C_{ij}$ there is some $A \in {\cal A}$ which covers $c$ on $V_iV_j$. Then note that ${\cal A}$ certainly has at least $|C_{ij}|$ members, but may have more. We refer to a 3-partite monic as a \emph{triangle}.

\begin{lem} \label{3freecol}
If $G$ is a homogeneous 3-generic graph, then for each distinct $i,j \in \{0,1,2\}$, the class $O(G)$ covers at most $|C_{ij}| -1$ colours on the restriction $V_iV_j$.
\end{lem}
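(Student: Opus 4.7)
The plan is to argue by contradiction using Fra\"iss\'e's theorem (Theorem~\ref{fraissethm}): since $G$ is homogeneous, $Age(G) = Forb(O(G))$ must be an amalgamation class, so it suffices to exhibit a two-point amalgamation problem inside $Forb(O(G))$ that has no valid amalgam.

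Suppose for contradiction that $O(G)$ covers the whole of $C_{ij}$ for some distinct $i,j$; without loss of generality take $(i,j) = (0,1)$. By Theorem~\ref{nonmonic} every member of $O(G)$ is monic, hence in the 3-partite setting is a triangle, and by Lemma~\ref{monobip} each such triangle has a single colour on each of its three bipartite restrictions. So for every $c \in C_{01}$ we can pick a triangle $T_c \in O(G)$ with $F_{T_c}(E_{01}) = c$, and record $d_c := F_{T_c}(E_{12})$, $e_c := F_{T_c}(E_{02})$.

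Now set up the amalgamation problem. Let $A$ consist of pairwise distinct vertices $\{v_2^c : c \in C_{01}\} \subseteq V_2$, let $B_1 = A \cup \{x_0\}$ with $x_0 \in V_0$ and $F(x_0, v_2^c) = e_c$ for every $c$, and let $B_2 = A \cup \{y_1\}$ with $y_1 \in V_1$ and $F(y_1, v_2^c) = d_c$ for every $c$. Both $B_1$ and $B_2$ use only two of the three parts, so contain no triangles at all, and therefore lie trivially in $Forb(O(G))$. In any amalgam, $x_0$ and $y_1$ cannot be identified (they live in different parts) and so the edge $x_0 y_1$ must be assigned some colour $c' \in C_{01}$. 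But then the induced triangle $\langle x_0, y_1, v_2^{c'} \rangle$ has colours $(c', d_{c'}, e_{c'})$ on $(E_{01}, E_{12}, E_{02})$, which is precisely $T_{c'} \in O(G)$; so the amalgam does not lie in $Forb(O(G))$. Since this fails for every $c' \in C_{01}$, $Forb(O(G))$ is not an amalgamation class, contradicting the homogeneity of $G$.

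The one conceptual move (rather than an obstacle) is to recognize that placing one witness vertex in $V_2$ per colour $c \in C_{01}$ is exactly what is needed to sabotage \emph{every} possible colour choice for $x_0 y_1$ simultaneously; verifying that $B_1, B_2 \in Forb(O(G))$ is immediate because they contain no triangles, and the contradiction then comes directly from Fra\"iss\'e's theorem.
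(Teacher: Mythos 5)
Your proof is correct and matches the paper's own argument in all essentials: both build the same configuration with one witness vertex $a_2^c$ (your $v_2^c$) in $V_2$ per colour $c \in C_{01}$, coloured to the corresponding omitted triangle, so that every choice of colour for the $V_0V_1$ edge forces a forbidden triangle; you simply phrase the contradiction as a failed two-point amalgamation while the paper phrases the same obstruction directly in terms of homogeneity. The only tiny gap is the step ``monic, hence a triangle'': to rule out size-$2$ members of $O(G)$ you should cite Remark~\ref{A>2} (which uses genericity to show no colours are omitted on a bipartite restriction), since otherwise it is not quite ``trivial'' that the bipartite graphs $B_1, B_2$ lie in $Forb(O(G))$.
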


\begin{proof}
Otherwise, suppose that for some distinct $i,j \in \{0,1,2\}$ we have a $C_{ij}$-cover set in $O(G)$. By Theorem~\ref{nonmonic}, minimally omitted 
graphs are monic, and by Remark~\ref{A>2} each has size at least 3, so in this case each member of $O(G)$ is a triangle. Without loss of generality, 
say that for each $c \in C_{01}$ we have some triangle $A^c \in O(G)$ with its $E_{01}$-edge coloured $c$.  Suppose that for $c \in C_{01}$, the 
$E_{02}$-edge of $A^c$ is coloured $c' \in C_{02}$ and the $E_{12}$-edge of $A^c$ is coloured $c' \in C_{12}$. (Note that 
$c' \in C_{02}, ~c' \in C_{12}$ are not necessarily really the same colour---we are looking at different bipartite restrictions. Also, note that 
distinct monics in the cover set may have edges of the same colour in some bipartite restrictions, for instance we may have 
$\alpha' = \beta' \in C_{02}$ for some $\alpha \ne \beta$ in $C_{01}$. Strictly speaking, the map from $c$ to $c'$ is a function on the colours.)

Now consider $H$ on vertices $v_0 \in V_0$, $v_1 \in V_1$, and $a_2^c \in V_2$ for each $c \in C_{01}$, with $F(v_0, v_1)$ undetermined, and 
$F(v_0, a_2^c) = c' \in C_{02}$ and $F(v_1, a_2^c) = c' \in C_{12}$ for each $c \in C_{01}$. Then $H-\{v_0\}, ~H-\{v_1\}$ are both realized since they 
are bipartite and $G$ is generic on each bipartite restriction. Thus by homogeneity we can realize $H$ in $G$. But this gives a contradiction since 
for each $c \in C_{01}$, if $F(v_0, v_1) = c$ then we have realized $A^c$ on $\langle v_0, v_1, a_2^c \rangle$, but each of these is omitted.
\end{proof}

In fact, the converse of this is also true---any such class $O(G)$ defines an amalgamation class. So this gives the full classification in the 3-generic case.

\begin{thm} \label{3gen}
A 3-generic graph $G$ is homogeneous if and only if $O(G)$ is some class of triangles which for each distinct $i,j \in \{0,1,2\}$ covers at most 
$|C_{ij}| -1$ colours on the restriction $V_iV_j$.
\end{thm}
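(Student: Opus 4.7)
The forward direction is essentially already done: if $G$ is homogeneous and $3$-generic, then by Theorem~\ref{nonmonic} every member of $O(G)$ is monic, by Remark~\ref{A>2} every member has size at least $3$, and since $G$ is $3$-partite the only possibility is that each $A\in O(G)$ is a triangle; then Lemma~\ref{3freecol} gives the covering bound.

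For the converse, suppose $\mathcal{F}$ is a class of triangles covering at most $|C_{ij}|-1$ colours on each restriction $V_iV_j$. I would apply Fra\"iss\'e's theorem (Theorem~\ref{fraissethm}), so it is enough to show that $\mathrm{Forb}(\mathcal{F})$ is an amalgamation class of finite $3$-partite $L$-graphs. Closure under isomorphism and substructure is immediate, so the work is amalgamation, which as noted after Theorem~\ref{fraissethm} reduces to two-point amalgamations of the form $A=B_1\cap B_2$ with $B_1=A\cup\{x\}$ and $B_2=A\cup\{y\}$, both in $\mathrm{Forb}(\mathcal{F})$.

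Split into two cases according to the parts of $x$ and $y$. If $x,y$ lie in the same part $V_i$, form $C=A\cup\{x,y\}$ with no edge between $x$ and $y$; any triangle in $C$ lies entirely in $B_1$ or in $B_2$ (it cannot contain both $x$ and $y$, which are in the same part), and hence is not in $\mathcal{F}$. If instead $x\in V_i$ and $y\in V_j$ with $i\neq j$, I need to choose a colour $c\in C_{ij}$ for the edge $xy$ so that no triangle in $\mathcal{F}$ is realized on some $\{x,y,z\}$ with $z\in V_k\cap A$ (where $\{i,j,k\}=\{0,1,2\}$); triangles not involving both $x,y$ already sit inside $B_1$ or $B_2$ and so are safe. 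By hypothesis, at most $|C_{ij}|-1$ colours in $C_{ij}$ are covered by $\mathcal{F}$, so there exists $c^{*}\in C_{ij}$ which is not covered; that is, no member of $\mathcal{F}$ has its $E_{ij}$-edge coloured $c^{*}$. Choosing $F(x,y)=c^{*}$ then ensures that no new triangle lies in $\mathcal{F}$, so the resulting $C$ belongs to $\mathrm{Forb}(\mathcal{F})$.

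This gives a countable homogeneous $L$-graph $G$ with $\mathrm{Age}(G)=\mathrm{Forb}(\mathcal{F})$. Finally, since no member of $\mathcal{F}$ is a bipartite graph, every finite bipartite $L$-graph lies in $\mathrm{Age}(G)$, so each bipartite restriction $V_iV_j$ of $G$ is $C_{ij}$-generic by Theorem~\ref{bipartite}; hence $G$ is $3$-generic, and clearly $O(G)=\mathcal{F}$ (members are pairwise non-embeddable as triangles). No step looks especially hard; the only subtle point is making sure that the new triangles created in the amalgamation are exactly those of the form $\{x,y,z\}$, so that the single uncovered colour $c^{*}$ suffices for all $z\in V_k\cap A$ simultaneously --- which it does, because the criterion for avoidance depends only on the colour of the $xy$-edge.
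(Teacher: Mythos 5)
Your proposal is correct and follows essentially the same route as the paper: the forward direction via Lemma~\ref{3freecol} (whose proof already invokes Theorem~\ref{nonmonic} and Remark~\ref{A>2} to conclude the omitted graphs are triangles), and the converse via the ``free colour'' observation for two-point amalgamations followed by Fra\"iss\'e's Theorem. You simply spell out in more detail the case split and the final verification that the resulting Fra\"iss\'e limit is $3$-generic with $O(G)=\mathcal{F}$, which the paper leaves implicit as ``easy to see.''
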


\begin{proof}
The forward direction is given by Lemma~\ref{3freecol}.

For the converse, let ${\cal F}$ be a family of triangles such that for each distinct $i,j \in \{0,1,2\}$, the family ${\cal F}$ covers at most 
$|C_{ij}| -1$ colours on the restriction $V_iV_j$. That is, for each distinct $i,j \in \{0,1,2\}$, there is some colour in $C_{ij}$ which is not 
covered by ${\cal F}$. Then it is easy to see that $Forb({\cal F})$ is an amalgamation class---2-point amalgamations can always be performed since 
there is a free colour available for the new edge. By Fra\" iss\' e's Theorem there is a countable homogeneous $3$-generic graph $G$ with 
$O(G)={\cal F}$. 
\end{proof}

Before moving on, let us look in more detail at the homogeneous 3-generic graphs given by the classification. Let $L$ be a language for coloured 
tripartite graphs, and suppose $\alpha \in C_{01} \cap C_{02} \cap C_{12}$. Consider the family ${\cal F}$ of all triangle $L$-graphs without any 
$\alpha$-coloured edges. Observe that $|{\cal F}| = \underset{i \ne j} \prod (|C_{ij}|-1)$. By Theorem~\ref{3gen}, for each 
${\cal F'} \subseteq {\cal F}$, the class $Forb({\cal F'})$ is an amalgamation class, and there is a homogeneous 3-generic $L$-graph $G$ with 
$O(G) = {\cal F'}$. Furthermore, up to colour-isomorphism, these are the only possible homogeneous 3-generic $L$-graphs. Note that this list of 
homogeneous $L$-graphs reduces when considering the graphs up to colour-isomorphism (for example, in a fixed language $L$, the homogeneous 3-generic 
$L$-graphs which omit just one triangle are all colour-isomorphic).

As a specific example, suppose that in the language $L$, for each distinct $i, j \in \{0,1,2\}$ we have $|C_{ij}| = 3$ and $\alpha \in C_{ij}$, and 
${\cal F'}$ is any subset of the family of 8 triangles with no $\alpha$-coloured edges. Then $Forb({\cal F'})$ is an amalgamation class, and there is 
a homogeneous 3-generic $L$-graph $G$ with $O(G) = {\cal F'}$. 

\section{Omission sets in $m$-generic graphs}

As we have seen, the homogeneous $m$-generic graphs are characterized by the monics that they omit. For tripartite graphs, the characterization of possible families of minimally omitted monics was relatively simple. However for $m \ge 4$,  things become more complicated. In the ordinary non-coloured graph case, the key configurations of omitted monics to understand were \emph{omission quartets}, consisting of certain \emph{differing} triangles (see \cite{jenkinson2}). In this section we look at the relevant generalizations of these notions for the coloured case. 

First we introduce some further basic notation and terminology. Let $G$ be an $m$-partite graph defined on parts $V_0, V_1, \ldots, V_{m-1}$. For $J \subseteq \{0, \ldots, m-1\}$, we call a subgraph $G'$ of $G$ defined on the parts $\underset{i \in J} \bigcup V_i$ a \emph{$J$-subgraph}. For example, a subgraph $G'$ defined on the parts $V_1, V_3, V_4$ of a 5-partite graph, is called a \emph{$134$-subgraph}. Furthermore, if $G'$ is monic, then we call it a \emph{$134$-monic} (or in this case a \emph{$134$-triangle}). If monics $A,B$ have the same colour $E_{ij}$-edge, then we say that $A,B$ \emph{agree} on this edge-type. 

Recall that a $C_{ij}$-cover set is a set of monics which covers all colours in $C_{ij}$. For distinct $i,j,k,l \in \{ 0, \ldots, m-1 \}$, we define a \emph{$C_{ij}^{kl}$-omission set} to be a set of $ijk$-triangles and $ijl$-triangles which form a $C_{ij}$-cover set, such that the triangles all agree on their $E_{ik}, E_{il}, E_{jk}, E_{jl}$-edges (that is, all of the triangles have the same colours on the bipartite restrictions $V_iV_k, V_iV_l, V_jV_k, V_jV_l$). If $S_{ij}$ is a $C_{ij}^{kl}$-omission set such that all of the triangles have $c_{ik}$-coloured $E_{ik}$-edges, $c_{il}$-coloured $E_{il}$-edges, $c_{jk}$-coloured $E_{jk}$-edges, and $c_{jl}$-coloured $E_{jl}$-edges, then we define the \emph{code} for $S_{ij}$ to be the tuple $(i,j,k,l; c_{ik},c_{il},c_{jk},c_{jl})$. That is, each omission set is `coded' by the four parts on which it is defined, and the four colours of the agreeing edge-types.

If a $C_{kl}^{ij}$-omission set $S_{kl}$ has the `same' code as $S_{ij}$, that is, it is defined on the same four parts, and has the same colours on the same four agreeing edge-types (so $S_{kl}$ is a set of $ikl$-triangles and $jkl$-triangles which all agree with $S_{ij}$ on their $E_{ik}, E_{il}, E_{jk}, E_{jl}$-edges), then we say that $S_{ij}, S_{kl}$ are \emph{corresponding} omission sets. These corresponding pairs of omission sets made up of triangles generalize the notion of omission quartets from the ordinary non-coloured graph case. An omission quartet in an ordinary $m$-generic graph is precisely a pair of corresponding omission sets for which each relevant colour set has size two (and so there are exactly four members of each omission quartet).

Consider a code $\Omega = (i,j,k,l; c_{ik},c_{il},c_{jk},c_{jl})$. For $c \in C_{ij}$, let $T_k^c$ be the $ijk$-triangle agreeing with $\Omega$ (that is, with a $c_{ik}$-coloured $E_{ik}$-edge and a $c_{jk}$-coloured $E_{jk}$-edge) and with $F(E_{ij})=c$, and similarly let $T_l^c$ be the $ijl$-triangle agreeing with $\Omega$ and with $F(E_{ij})=c$. If $S_{ij}$ is a $C_{ij}^{kl}$-omission set with code $\Omega$, then for each $c \in C_{ij}$ either $T_k^c$ or $T_l^c$ lies in $S_{ij}$. Note that for general $C_{ij}$-cover sets, $S_{ij}$ may have more than $|C_{ij}|$ members---here this means that for some $c \in C_{ij}$ both $T_k^c$ and $T_l^c$ may lie in $S_{ij}$. 

\begin{rem} \label{same3}
By Lemmas \ref{restriction} and \ref{3freecol}, there is no $C_{ij}$-cover set defined on just 3 parts. So a $C_{ij}^{kl}$-omission set with code $\Omega$ never contains only $ijk$-triangles $\{T_k^c: c \in C_{ij}\}$ or only $ijl$-triangles $\{T_l^c: c \in C_{ij}\}$ agreeing with $\Omega$. In particular, a $C_{ij}^{kl}$-omission set always contains both $ijk$-triangles and $ijl$-triangles. 
\end{rem}

\begin{lem} \label{corros}
Let $G$ be a homogeneous $m$-generic graph. If there is a $C_{ij}^{kl}$-omission set in $O(G)$, then there is a corresponding $C_{kl}^{ij}$-omission set in $O(G)$. 
\end{lem}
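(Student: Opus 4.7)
The plan is to argue by contradiction, assuming a $C_{ij}^{kl}$-omission set $S_{ij} \subseteq O(G)$ with code $\Omega = (i,j,k,l;\, c_{ik}, c_{il}, c_{jk}, c_{jl})$ exists but no corresponding $C_{kl}^{ij}$-omission set lies in $O(G)$. Writing $T_i^c$ (resp.\ $T_j^c$) for the $ikl$-triangle (resp.\ $jkl$-triangle) agreeing with $\Omega$ whose $E_{kl}$-edge has colour $c$, the first step is to extract a single $c^* \in C_{kl}$ for which both $T_i^{c^*}$ and $T_j^{c^*}$ are realized in $G$. If no such $c^*$ existed, then for every $c \in C_{kl}$ at least one of $T_i^c, T_j^c$ would lie in $O(G)$, and picking one such triangle per colour would give a $C_{kl}$-cover set in $O(G)$. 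Lemma \ref{3freecol} applied to the 3-generic restrictions $\langle V_i \cup V_k \cup V_l\rangle$ and $\langle V_j \cup V_k \cup V_l\rangle$ (each homogeneous by Lemma \ref{restriction}) rules out the degenerate possibilities that all $T_i^c$ lie in $O(G)$ or that all $T_j^c$ do; hence this cover set contains both types of triangle, giving a $C_{kl}^{ij}$-omission set in $O(G)$ with code $\Omega$ and contradicting the hypothesis.

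With $c^*$ in hand, the second step is to realize a 4-partite monic inside $G$. By bipartite genericity pick $v_k \in V_k, v_l \in V_l$ with $F(v_k, v_l) = c^*$. Since $T_i^{c^*}$ is realized in $G$, homogeneity extends the embedding $\{v_k, v_l\}$ to a copy of $T_i^{c^*}$, furnishing $v_i \in V_i$ with $F(v_i, v_k) = c_{ik}$ and $F(v_i, v_l) = c_{il}$. Independently, realization of $T_j^{c^*}$ provides $v_j \in V_j$ with the analogous edges $c_{jk}$ and $c_{jl}$ to $v_k, v_l$. The edge $F(v_i, v_j) = c'$ is then determined in $G$ for some $c' \in C_{ij}$, and the triangles $\{v_i, v_j, v_k\}$ and $\{v_i, v_j, v_l\}$ realised inside $G$ are precisely the $T_k^{c'}$ and $T_l^{c'}$ from the paper's notation for $S_{ij}$. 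Since $S_{ij}$ covers $C_{ij}$ with code $\Omega$, at least one of these triangles must lie in $S_{ij} \subseteq O(G)$, contradicting its realization.

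The main obstacle is the first reduction: converting the abstract non-existence hypothesis into a single critical colour $c^*$ witnessing both $T_i^{c^*}$ and $T_j^{c^*}$ realized. This step is where Lemma \ref{3freecol} on the two 3-partite restrictions does real work, excluding the degenerate situation in which a $C_{kl}$-cover set in $O(G)$ uses only one type of triangle (so that the cover set would fail to be an omission set by Remark~\ref{same3}). Once $c^*$ is fixed, the 4-vertex construction is essentially a direct application of the homogeneity technique already standard in the paper (compare the use of homogeneity in Lemma \ref{uniquemonic}), and the final contradiction follows immediately from the cover property of $S_{ij}$.
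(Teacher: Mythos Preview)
Your proof is correct and takes essentially the same approach as the paper's: both arguments amalgamate $T_i^{c^*}$ and $T_j^{c^*}$ over their common $E_{kl}$-edge via homogeneity, inspect the resulting $E_{ij}$-edge colour $c'$, and derive a contradiction from the fact that one of $T_k^{c'}, T_l^{c'}$ lies in $S_{ij}$. The only difference is packaging: the paper argues directly (for each $d \in C_{kl}$, if both $T_i^d, T_j^d$ were realized we get a contradiction, hence one is omitted; the resulting collection is the required omission set by Remark~\ref{same3}), whereas you wrap the same argument in an outer proof by contradiction, which is logically equivalent but slightly less streamlined.
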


\begin{proof}
Let $S_{ij}$ be a $C_{ij}^{kl}$-omission set in $O(G)$ with code $\Omega = (i,j,k,l; c_{ik},c_{il},c_{jk},c_{jl})$. Then for each $c \in C_{ij}$, at least one of the triangles $T_k^c$ or $T_l^c$ agreeing with $\Omega$ (as defined above) lies in $S_{ij}$. 

Similarly, for each $d \in C_{kl}$, let $T_i^d, T_j^d$ be the $ikl$-triangle and $jkl$-triangle respectively agreeing with $\Omega$ and with $d$-coloured $E_{kl}$-edges. Suppose that for some $d \in C_{kl}$, both $T_i^d$ and $T_j^d$ are realized in $G$. Then by the homogeneity of $G$, we can embed them to agree on their agreeing $E_{kl}$-edges. Say that we have $v_i, v_j, v_k, v_l \in G$ such that $\langle v_i, v_k, v_l \rangle$ is a copy of $T_i^d$ and $\langle v_j, v_k, v_l \rangle$ is a copy of $T_j^d$. Now consider the edge $v_iv_j$: if $F(v_i,v_j) = c \in C_{ij}$, then $\langle v_i, v_j, v_k \rangle$ is a copy of $T_k^c$ and $\langle v_i, v_j, v_l \rangle$ is a copy of $T_l^c$. But for each $c \in C_{ij}$ at least one of these triangles lies in $S_{ij}$, so is omitted, which gives a contradiction. 

Thus for each $d \in C_{kl}$, at least one of $T_i^d, T_j^d$ is omitted. Call such a triangle $T^d$ and let $S_{kl} = \{ T^d : d \in C_{kl} \}$. By 
Remark~\ref{same3}, $S_{kl}$ must have some $ikl$-triangles and some $jkl$-triangles. Thus $S_{kl} \subseteq O(G)$ is a $C_{kl}^{ij}$-omission set 
with code $\Omega$ corresponding to $S_{ij}$. 
\end{proof}

\section{Quadripartite graphs}

In this section, we concentrate on the quadripartite (4-partite) case, and we see how by understanding omission sets we are able to give a complete classification of the homogeneous 4-generic graphs.

First we introduce another relevant piece of terminology. If ${\cal A}$ is a $C_{ij}$-cover set, we say that $S_{ij}$ is a 
$C_{ij}^{kl}$-omission set \emph{based on} ${\cal A}$ if there is a pair of monics $A^c, A^d \in {\cal A}$ with the colours on 
the $E_{ik}, E_{jk}$-edges of members of $S_{ij}$ agreeing with those in $A^c$, and the colours on the $E_{il}, E_{jl}$-edges 
of members of $S_{ij}$ agreeing with those in $A^d$. That is, $S_{ij}$ with code 
$\Omega = (i,j,k,l; c_{ik}, c_{il}, c_{jk}, c_{jl})$ is based on ${\cal A}$ if there are $A^c \in {\cal A}$ such that 
$F_{A^c}(E_{ij})=c, ~F_{A^c}(E_{ik})=c_{ik}, ~F_{A^c}(E_{jk})=c_{jk}$, and $A^d \in {\cal A}$ such that 
$F_{A^d}(E_{ij})=d, ~F_{A^d}(E_{il})=c_{il}, ~F_{A^d}(E_{jl})=c_{jl}$. We shall also say that $S_{ij}$ is \emph{based on} 
$A^c, A^d$.

\begin{lem} \label{4partite}
Let $G$ be a homogeneous 4-generic graph, and let ${\cal A}$ be a $C_{ij}$-cover set in $O(G)$ for some distinct 
$i,j \in \{0,1,2,3\}$. Then there is a $C_{ij}^{kl}$-omission set in $O(G)$ based on ${\cal A}$, where 
$\{k,l\} = \{0,1,2,3\} - \{i,j\}$. Furthermore, ${\cal A}$ must have some $ijk$-triangles and some $ijl$-triangles.
\end{lem}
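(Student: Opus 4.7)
The plan is to establish (b) first, using Lemma~\ref{restriction} and Lemma~\ref{3freecol} applied to the 3-partite restrictions of $G$, and then to deduce (a) from (b) using Lemma~\ref{corros}.

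For (b), I would argue by contradiction: suppose ${\cal A}$ contains no $ijk$-triangles, so every member of ${\cal A}$ is an $ijl$-triangle or an $ijkl$-monic. By Lemma~\ref{restriction}, the restriction $G_{ijl}$ is homogeneous and 3-generic, so by Lemma~\ref{3freecol} the $ijl$-triangles in $O(G)$---which coincide with $O(G_{ijl})$---cover at most $|C_{ij}|-1$ colours on $V_iV_j$. Hence the $ijl$-triangles in ${\cal A}$ alone do not cover $C_{ij}$, so some colour $\gamma\in C_{ij}$ is covered in ${\cal A}$ only by an $ijkl$-monic $M^\gamma$. Since $M^\gamma$ is minimally omitted, all of its sub-triangles are realized in $G$; combining these with homogeneity and the cover property of ${\cal A}$ for the remaining colours, I would construct a 4-vertex configuration in $G$ that realizes $M^\gamma$ (or another member of ${\cal A}$), contradicting omission. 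A symmetric argument rules out ${\cal A}_{ijl}=\emptyset$.

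For (a), using (b) pick $A\in{\cal A}_{ijk}$ with $F_A(E_{ij})=c_0$, $c_{ik}=F_A(E_{ik})$, $c_{jk}=F_A(E_{jk})$, and $B\in{\cal A}_{ijl}$ with $F_B(E_{ij})=d_0$, $c_{il}=F_B(E_{il})$, $c_{jl}=F_B(E_{jl})$. Set $\Omega=(i,j,k,l;c_{ik},c_{il},c_{jk},c_{jl})$, a code based on $A,B\in{\cal A}$. I claim that for each $c\in C_{ij}$, at least one of $T_k^c,T_l^c$ (with code $\Omega$) lies in $O(G)$; this is immediate for $c=c_0$ (where $T_k^{c_0}=A$) and $c=d_0$ (where $T_l^{d_0}=B$). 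For other $c$, argue by contradiction: if both $T_k^c$ and $T_l^c$ are realized, homogeneity lets me amalgamate them over their shared $E_{ij}$-edge into a 4-vertex configuration $\{v_i,v_j,v_k,v_l\}$ with $F(v_k,v_l)=e$ for some $e\in C_{kl}$. If for every $e\in C_{kl}$ at least one of the $ikl$-triangle $(c_{ik},c_{il},e)$ or the $jkl$-triangle $(c_{jk},c_{jl},e)$ is in $O(G)$, then those omitted triangles form a $C_{kl}^{ij}$-omission set with code $\Omega$ in $O(G)$; Lemma~\ref{corros} applied in its symmetric form (swapping $\{i,j\}\leftrightarrow\{k,l\}$) then yields a corresponding $C_{ij}^{kl}$-omission set in $O(G)$ with code $\Omega$, which must contain $T_k^c$ or $T_l^c$, contradicting their realization.

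The main obstacle will be handling the remaining subcase, in which some $e^*\in C_{kl}$ yields a realizable 4-vertex configuration. The resulting realized 4-monic $M^{c,e^*}$ does not immediately contradict $A^c\in{\cal A}$ unless $A^c$'s non-$E_{ij}$ edge-colours match those of $M^{c,e^*}$. Concluding the proof then requires either iteratively replacing $A$ or $B$ in $\Omega$ by other elements of ${\cal A}$ whose edge-colours match those of $A^c$---an iteration that terminates by the finiteness of $C_{ij}$---or a more delicate amalgamation argument exploiting the specific type (triangle versus $ijkl$-monic) of $A^c$ together with the constraint that all sub-triangles of an $ijkl$-monic in $O(G)$ must be realized. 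This interaction between triangles and $ijkl$-monics in ${\cal A}$ is the essential difficulty introduced by the coloured setting, absent from the non-coloured treatment in \cite{jenkinson2}.
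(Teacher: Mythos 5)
Your proposal has two genuine gaps, and the underlying approach diverges from the paper's in a way that makes those gaps hard to close.

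First, you try to prove the "furthermore" clause (both kinds of triangles occur in ${\cal A}$) before and independently of the main omission-set conclusion. Your Lemma~\ref{3freecol} step is fine: the $ijl$-triangles in ${\cal A}$ cannot by themselves cover $C_{ij}$, so some colour $\gamma$ is covered only by an $ijkl$-monic $M^\gamma$. But the sentence "I would construct a 4-vertex configuration in $G$ that realizes $M^\gamma$" is exactly the hard step, and it does not follow from minimality of $M^\gamma$ alone: amalgamating $M^\gamma - \{v_i\}$ and $M^\gamma - \{v_j\}$ over $M^\gamma - \{v_i, v_j\}$ produces a 4-monic whose $E_{ij}$-edge can be any colour, not necessarily $\gamma$. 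You need to simultaneously rule out every colour, which requires exploiting all members of ${\cal A}$ at once. In the paper the order is reversed: the existence of an omission set based on ${\cal A}$ (part (a)) is established first, by constructing a single graph $H$ with one vertex $a_2^c \in V_2$ per $c$ that uses $V_2$ and one $a_3^c \in V_3$ per $c$ that uses $V_3$, and then the "both kinds of triangles" claim is derived by an iterative replacement argument that uses the already-established omission set. Your plan inverts the dependency and leaves the first half unsupported.

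Second, in your part (a) you fix an \emph{arbitrary} pair $A \in {\cal A}_{ijk}$, $B \in {\cal A}_{ijl}$, form the code $\Omega$ from their off-$E_{ij}$ edge colours, and try to show that $\{T_k^c, T_l^c : c \in C_{ij}\}$ with code $\Omega$ contains an omission set. That claim is false for an arbitrary choice of $A,B$: the lemma only asserts that \emph{some} pair (the paper's $\alpha,\beta$) yields a valid code, and the paper identifies that pair nonconstructively by a global contradiction (if no code works, the graph $H$ can be fully coloured and realized, realizing some $A^c$). You in fact notice this problem yourself in the "remaining subcase": when both the $ikl$- and $jkl$-triangle with some $e^* \in C_{kl}$ are realized, the resulting realized 4-monic with $E_{ij}$-colour $c$ need not be $A^c$, and there is no contradiction. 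Your proposed repair (iterate by replacing $A$ or $B$ to match $A^c$'s colours) has no termination or progress argument and is not the route the paper takes. The missing idea is precisely the single graph $H$ that packages all colours $c \in C_{ij}$ and all candidate pairs $\alpha,\beta$ into one amalgamation, so that a clean dichotomy (either some pair gives an omission set, or $H$ is realized and realizes some $A^c$) is obtained. Without that global construction, neither half of your plan closes.
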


\begin{proof}
Without loss of generality, we may assume that ${\cal A}$ is a $C_{01}$-cover set, $|{\cal A}| = |C_{01}|$, and that 
${\cal A} = \{ A^c : c \in C_{01} \}$ where for each $c \in C_{01}$,  $A^c$ is a monic with $c$-coloured $E_{01}$-edge. Let 
$\chi_1 = \{ c \in C_{01} : A^c \textrm{ is a 0123-monic} \}$, 
$\chi_2 = \{ c \in C_{01} : A^c \textrm{ is a 012-triangle} \}$, 
$\chi_3 = \{ c \in C_{01} : A^c \textrm{ is a 013-triangle} \}$, so $C_{01} = \chi_1 \cup \chi_2 \cup \chi_3$. Note that 
$\chi_1$ may be empty, and by Remark~\ref{same3} we cannot have either $\chi_1 = \chi_2 = \emptyset$ or 
$\chi_1 = \chi_3 = \emptyset$. 

For each $c \in C_{01}$, we shall refer to the colours of all edges of $A^c$ (other than the $c$-coloured $E_{01}$-edge) as 
$c'$. So if $(i, j) \neq (0, 1)$ and $A^c$ is defined on $V_i$ and $V_j$, then on the bipartite restriction $V_iV_j$, 
$c' := F_{A^c}(E_{ij}) \in C_{ij}$. Thus, as in the proof of Lemma~\ref{3freecol}, for each bipartite restriction $V_iV_j$ we 
think of the map taking $c$ to $c'  \in C_{ij}$ as a function on the set of colours $C_{01}$ (except note that now the map may 
only be partially defined for some bipartite restrictions).

Now construct a 4-partite graph $H$ with vertices $v_0 \in V_0, v_1 \in V_1$, $a_2^c \in V_2$ for each $c \in \chi_1 \cup \chi_2$, and $a_3^c \in V_3$ for each $c \in \chi_1 \cup \chi_3$. We colour the edges of $H$ as follows: for each $c \in \chi_1 \cup \chi_2$, let $F(v_0, a_2^c) = c' \in C_{02}$ and $F(v_1, a_2^c) = c' \in C_{12}$ (so these edges agree with the $E_{02}, E_{12}$-edges of $A^c$); similarly for each $c \in \chi_1 \cup \chi_3$, let $F(v_0, a_3^c) = c' \in C_{03}$ and $F(v_1, a_3^c) = c' \in C_{13}$; and for each $c \in \chi_1$, let $F(a_2^c, a_3^c) = c' \in C_{23}$. The colour of $v_0v_1$ is undefined, and it remains to decide the colours of the other $E_{23}$-edges.

We claim that for some distinct pair $\alpha \in \chi_1 \cup \chi_2, ~\beta \in \chi_1 \cup \chi_3$, there is a $C_{23}^{01}$-omission set $S_{23}$ in 
$O(G)$ with colours on the $E_{02}, E_{12}, E_{03}, E_{13}$-edges as in $\langle v_0, v_1, a_2^\alpha, a_3^\beta \rangle \subseteq H$---that is, with 
code $\Omega^{\alpha \beta}=(0,1,2,3; \alpha' \in C_{02}, ~\beta' \in C_{03}, ~\alpha' \in C_{12}, ~\beta' \in C_{13})$. If this is not the case, then 
for each such pair $\alpha, \beta$ there must be some colour $c^{\alpha \beta} \in C_{23}$ such that the 023-triangle with $\alpha'$-coloured 
$E_{02}$-edge, $\beta'$-coloured $E_{03}$-edge, and $c^{\alpha \beta}$-coloured $E_{23}$-edge, and the 123-triangle with $\alpha'$-coloured 
$E_{12}$-edge, $\beta'$-coloured $E_{13}$-edge, and $c^{\alpha \beta}$-coloured $E_{23}$-edge, are both realized in $G$. Now for each such distinct 
pair $\alpha, \beta$, let $F(a_2^\alpha, a_3^\beta) = c^{\alpha \beta}$ in $H$. We aim to show that $H - \{v_0\}$ and $H - \{v_1\}$ are both realized 
in $G$. Consider any (triangle) monic subgraph $\langle x, a_2^\gamma, a_3^\delta \rangle$ of $H - \{v_0\}$ or $H - \{v_1\}$, where 
$x \in \{v_0, v_1 \}$. If $\gamma \ne \delta$, then this is a triangle we have just mentioned, and we assumed that the colour $c^{\gamma\delta}$ of 
the edge $a_2^\gamma a_3^\delta$ could be chosen so that these triangles are realized in $G$. Otherwise, if $\gamma=\delta$, then this is a proper 
subgraph of the 4-monic $A^\gamma$ in $O(G)$, hence the triangle is realized. Thus by Corollary \ref{monic}, the graphs $H - \{v_0\}$ and 
$H - \{v_1\}$ are both realized in $G$, and so by the homogeneity of $G$, the graph $H$ with some defined edge $v_0 v_1$ is realized in $G$. But for 
each $c \in C_{01}$ if $F(v_0, v_1) = c$ under this embedding, then we have also realized $A^c$ in $G$ (on 
$\langle v_0, v_1, v_i^c: i \in I^c - \{0,1\} \rangle$), which gives a contradiction since each of these is omitted. 

Note that by Remark~\ref{same3}, $S_{23}$ must have some 023-triangles and some 123-triangles. By Lemma~\ref{corros} there is also a corresponding $C_{01}^{23}$-omission set $S_{01}$ in $O(G)$ with code $\Omega^{\alpha \beta}$. The $E_{02}, E_{12}$-edges of the 012-triangles in $S_{01}$ agree with $A^\alpha$, and the $E_{03}, E_{13}$-edges of the 013-triangles in $S_{01}$ agree with $A^\beta$, so $S_{01}$ is a $C_{01}^{23}$-omission set based on ${\cal A}$ (in particular based on $A^\alpha, A^\beta$). 

Finally, suppose that the original $C_{01}$-cover set ${\cal A}$ has no 013-triangles, that is, $\chi_3 = \emptyset$, and aim for a contradiction. Note that we may follow the same construction as above to find a $C_{23}^{01}$-omission set $S_{23}$, and corresponding $C_{01}^{23}$-omission set $S_{01}$ based on $A^\alpha, A^\beta \in {\cal A}$ in $O(G)$. Since $\chi_3 = \emptyset$, we know $\beta \in \chi_1$, that is, $A^\beta$ is a 0123-monic. As $A^\beta$ is minimally omitted, its 013-subgraph must be realized in $G$. That is, the 013-triangle with $\beta$-coloured $E_{01}$-edge, $\beta'$-coloured $E_{03}$-edge, and $\beta'$-coloured $E_{13}$-edge is not included in the $C_{01}^{23}$-omission set $S_{01}$. Because $S_{01}$ is a $C_{01}^{23}$-omission set, the 012-triangle $B^\beta$ with $\beta$-coloured $E_{01}$-edge, $\alpha'$-coloured $E_{02}$-edge, and $\alpha'$-coloured $E_{12}$-edge must lie in $S_{01}$. But then we could replace $A^\beta$ in ${\cal A}$ by $B^\beta$ to get a new $C_{01}$-cover set in $O(G)$ with no 013-triangles and one fewer 4-partite monic. By repeating this procedure, we could find a cover set in $O(G)$ with no 4-partite monics at all. This would be a cover set consisting of only 012-triangles, which contradicts Remark~\ref{same3}. Hence each $C_{01}$-cover set must have some $012$-triangles and some $013$-triangles. 
\end{proof}

These conditions are in fact enough to characterize all of the homogeneous coloured 4-generic graphs. 

\begin{thm} \label{4generic}
Let $L$ be a language for coloured quadripartite graphs, and let ${\cal F}$ be a family of pairwise non-embeddable monic $L$-graphs. Then there is a 
(unique) countable homogeneous 4-generic $L$-graph $G$ with $Age(G) = Forb({\cal F})$ if and only if ${\cal F}$ satisfies the following: 
\begin{enumerate}
\item[(i)] if ${\cal A} \subseteq {\cal F}$ is a $C_{ij}^{kl}$-omission set, then there is a corresponding $C_{kl}^{ij}$-omission set ${\cal B}$ in 
${\cal F}$;
\item[(ii)]  if ${\cal A} \subseteq {\cal F}$ is a $C_{ij}$-cover set, then there is a $C_{ij}^{kl}$-omission set in ${\cal F}$ based on ${\cal A}$ 
(so in particular, ${\cal A}$ must have some $ijk$-triangles and some $ijl$-triangles).
\end{enumerate}
\end{thm}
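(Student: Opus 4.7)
The plan is to handle the two directions separately. For $(\Rightarrow)$, if $G$ is countable homogeneous $4$-generic with $Age(G) = Forb({\cal F})$, then since ${\cal F}$ consists of pairwise non-embeddable monics we have $O(G) = {\cal F}$, so conditions (i) and (ii) follow directly from Lemma~\ref{corros} and Lemma~\ref{4partite} respectively. I would record this briefly and then turn to $(\Leftarrow)$, which carries the real content.

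\textbf{The converse via 2-point amalgamation.} For the converse, assume (i) and (ii). By Fra\"iss\'e's Theorem and the reduction at the end of Section~2 it suffices to show that any 2-point amalgamation problem $(A, B_1, B_2)$ in $Forb({\cal F})$, with $B_1 = A \cup \{x_1\}$ and $B_2 = A \cup \{x_2\}$, admits an amalgam in $Forb({\cal F})$. If $x_1, x_2$ lie in the same part, the disjoint amalgam works. Otherwise $x_1 \in V_i$, $x_2 \in V_j$ with $i \ne j$; set $\{k, l\} = \{0,1,2,3\} - \{i,j\}$. I argue by contradiction: suppose no colour in $C_{ij}$ works, so for each $c \in C_{ij}$ there is some $A^c \in {\cal F}$ realized in $A \cup \{x_1, x_2\}$ when $F(x_1, x_2) = c$. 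Each $A^c$ must contain both $x_1$ and $x_2$ (else it would already lie in $B_1$ or $B_2$); by Remark~\ref{A>2} it has size at least three, so it contains further vertices in $V_k$ and/or $V_l$, and these necessarily lie in $A$.

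\textbf{Producing the contradiction.} The collection ${\cal A} := \{A^c : c \in C_{ij}\}$ is then a $C_{ij}$-cover set inside ${\cal F}$. By~(ii) there is a $C_{ij}^{kl}$-omission set $S_{ij} \subseteq {\cal F}$ based on ${\cal A}$ with some code $\Omega = (i,j,k,l;\, c_{ik}, c_{il}, c_{jk}, c_{jl})$; this supplies an $A^\gamma \in {\cal A}$ with a $V_k$-vertex $v_k \in A$ carrying the prescribed $E_{ik}, E_{jk}$-colours, and an $A^\delta \in {\cal A}$ with a $V_l$-vertex $v_l \in A$ carrying the prescribed $E_{il}, E_{jl}$-colours. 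By~(i) there is a corresponding $C_{kl}^{ij}$-omission set $S_{kl} \subseteq {\cal F}$ with the same code. Setting $e := F(v_k, v_l) \in C_{kl}$, some $T \in S_{kl}$ covers $e$, and $T$ is either the $ikl$-triangle or the $jkl$-triangle agreeing with $\Omega$ and using colour $e$. In the first case, $\langle x_1, v_k, v_l \rangle \subseteq B_1$ is a copy of $T$: its edge colours match because $A^\gamma$ forces $F(x_1, v_k) = c_{ik}$, $A^\delta$ forces $F(x_1, v_l) = c_{il}$, and $F(v_k, v_l) = e$; this contradicts $B_1 \in Forb({\cal F})$. In the second case $\langle x_2, v_k, v_l \rangle \subseteq B_2$ is a copy of $T$ by the analogous reasoning, contradicting $B_2 \in Forb({\cal F})$.

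\textbf{Anticipated difficulty.} The hard part is the bookkeeping around ``based on'': one must be sure that $A^\gamma$ and $A^\delta$ really exist and that their $V_k$- and $V_l$-vertices lie in $A$ with precisely the adjacencies to $x_1, x_2$ recorded by $\Omega$. Degenerate cases also need to be addressed---for instance, if $A$ has no vertex in $V_k$, then no $A^c$ can be an $ijk$-triangle or $4$-partite monic, but condition~(ii) (via its parenthetical) rules out such a ${\cal A}$ being a cover set, so the alleged failure of amalgamation cannot arise. Once these points are in hand, existence and uniqueness of $G$ follow from Fra\"iss\'e's Theorem.
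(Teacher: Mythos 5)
Your proposal is correct and follows essentially the same route as the paper's proof: the forward direction via Lemmas~\ref{corros} and \ref{4partite} (together with the remark that pairwise non-embeddability gives $O(G)={\cal F}$), and the converse by the same 2-point amalgamation argument, forming the cover set $\{A^c\}$, invoking (ii) to get a based-on omission set, locating the witnessing vertices $v_k,v_l \in A$, invoking (i) for the corresponding $C_{kl}^{ij}$-omission set, and deriving a contradiction from $F(v_k,v_l)$. The only difference is that you spell out the bookkeeping around ``based on'' (why the $V_k,V_l$-vertices of the realized $A^\gamma,A^\delta$ sit in $A$ with the coded adjacencies) slightly more explicitly than the paper does.
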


\begin{proof}
The forward direction is given by Lemmas~\ref{corros} and \ref{4partite}.

For the converse, we must show that if ${\cal F}$ satisfies conditions (i) and (ii), then $Forb({\cal F})$ is an amalgamation class. 
So let $A, B_1, B_2 \in Forb({\cal F})$ be such that $A$ embeds in $B_1$ and $B_2$. We may assume that $B_1 = A \cup \{x\}, ~B_2 = A \cup \{y\}$. Now 
we just need to decide the colour of the edge $xy$ to form the amalgam $C = A \cup \{x,y\}$ so that no member of ${\cal F}$ is realized. 

If $x,y$ are in the same part $V_i$, then no decision is required. Vertices $x,y$ are not joined, and certainly we shall not have realized any member 
of ${\cal F}$ since these graphs are monic (if $C$ realizes $D \in {\cal F}$, then $D$ was already realized in $B_1$ or $B_2$, but 
$B_1, B_2 \in Forb({\cal F})$ so this does not happen).

So without loss of generality, suppose $x \in V_0, ~y \in V_1$. If there is some colour $c \in C_{01}$ such that $C$ with $F(x,y) = c$ does not 
realize any members of ${\cal F}$, then we may colour the edge $xy$ by $c$ and we have finished. Otherwise, for each $c \in C_{01}$, the graph $C$ 
with $F(x,y) = c$ realizes some $A^c \in {\cal F}$ with $c$-coloured $E_{01}$-edge. We aim to show that this will never happen. First note that the 
set ${\cal A} := \{ A^c : c \in C_{01} \}$ is a $C_{01}$-cover set in ${\cal F}$. By (ii), there is a $C_{01}^{23}$-omission set $S_{01}$ in 
${\cal F}$ based on ${\cal A}$. Thus there are some $v_2 \in V_2^C$, $v_3 \in V_3^C$ such that the $C_{01}^{23}$-omission set 
$S_{01} \subseteq {\cal F}$ agrees with $\langle x, y, v_2, v_3 \rangle$ on $E_{02}, E_{03}, E_{12}, E_{13}$-edges. But then by (i), ${\cal F}$ also 
contains a corresponding $C_{23}^{01}$-omission set $S_{23}$. Now consider the edge $v_2v_3$ in $C$, say $F(v_2, v_3) = d \in C_{23}$. At least one of 
$\langle x, v_2, v_3 \rangle$ or $\langle y, v_2, v_3 \rangle$ with $F(v_2, v_3) = d$ lies in $S_{23} \subseteq {\cal F}$. But $x, v_2, v_3 \in B_1$ 
and $y, v_2, v_3 \in B_2$, so one of $B_1, B_2$ realizes a forbidden monic, which contradicts our initial assumptions. 
\end{proof}

At this point let us consider some examples of homogeneous 4-generic graphs given by this classification. Let $L$ be a fixed language for quadripartite graphs. We give some examples of families of finite $L$-graphs ${\cal F}$ for which $Forb({\cal F})$ is an amalgamation class. Recall that we only really want to consider families of pairwise non-embeddable monics to guarantee that all members of ${\cal F}$ are in fact minimally omitted by the homogeneous graph $G$ with $Age(G) = Forb({\cal F})$, so that $O(G) = {\cal F}$.

To describe these examples, it is useful to introduce some further notation for coloured monic $m$-partite graphs. A $J$-monic $A$ (defined on parts 
$J \subseteq \{0, \ldots, m-1 \}$) will be denoted by $[J; \overline{\chi}]$ where $\overline{\chi}$ is an ordered tuple listing the colours of all 
the edges of $A$, that is $\overline{\chi} = (c_{ij} \in C_{ij}: F_A(E_{ij})=c_{ij} \textrm{ for } i,j \in J \textrm{ with } i<j)$ where the 
$c_{ij}$s are ordered by the lexicographic ordering on the pairs $(i,j) \in J^2$. For example, if $A$ is a $0235$-monic such that for each 
$i,j \in \{0,2,3,5\}$ with $i<j$, $F_A(E_{ij})=c_{ij} \in C_{ij}$, then $\overline{\chi} = (c_{02}, c_{03}, c_{05}, c_{23}, c_{25}, c_{35})$. 
Using this notation the $123$-triangle with $\alpha$-coloured $E_{12}$-edge, $\alpha$-coloured $E_{13}$-edge, and $\beta$-coloured $E_{23}$-edge is 
denoted by $[\{1,2,3 \}; (\alpha, \alpha, \beta)]$, which is further abbreviated to $[123; \alpha \alpha \beta]$. 

In each of the following examples, consider the language $L$ such that $C_{ij}=\{ \alpha, \beta, \gamma \}$ for each distinct $i,j \in \{0,1,2,3\}$. 

\begin{eg}
Examples without any cover sets:

If ${\cal F} = \{ [0123; \alpha \alpha \alpha \alpha \alpha \alpha], [012; \beta \beta \beta], [023; \alpha \beta \beta] \}$, then $Forb({\cal F})$ is an amalgamation class, and there is a unique homogeneous 4-generic $L$-graph $G$ with $O(G) = {\cal F}$. 
This is because if ${\cal F}$ is any set of monics which does not contain a cover set, then $Forb({\cal F})$ is an amalgamation class. 

Similarly, if ${\cal F}$ is the set of all 32 triangle $L$-graphs with no $\gamma$-coloured edges, and ${\cal F}' \subseteq {\cal F}$, then $Forb({\cal F}')$ is an amalgamation class, and there is a unique homogeneous 4-generic $L$-graph $G$ with $O(G) = {\cal F}'$. 
\end{eg}

\begin{eg}
The most basic type of example which includes a cover set:

Let $A_1 = [012; \alpha \alpha \alpha]$, $A_2 = [012; \beta \alpha \alpha]$, $A_3 = [013; \gamma \alpha \alpha]$, $A_4 = [023; \alpha \alpha \alpha]$, $A_5 = [123; \alpha \alpha \beta]$, $A_6 = [123; \alpha \alpha \gamma]$, and let ${\cal F}_1 = \{ A_i : 1 \le i \le 6 \}$. Then $Forb({\cal F}_1)$ is an amalgamation class, and there is a unique homogeneous 4-generic $L$-graph $G_1$ with $O(G_1) = {\cal F}_1$. 
Observe that $\{ A_1, A_2, A_3\}$ is a $C_{01}^{23}$-omission set, and $\{ A_4, A_5, A_6\}$ is a corresponding $C_{23}^{01}$-omission set, but there are no other cover sets. 
\end{eg}

\begin{eg}
Let $A_7 = [013; \beta \alpha \alpha]$, $A_8 = [023; \alpha \alpha \beta]$, and $A^* = [0123; \beta \beta \beta \beta \beta \beta]$. For each ${\cal A} \subseteq \{A_7, A_8, A^* \}$, $Forb({\cal F}_1 \cup {\cal A})$ is an amalgamation class. 
\end{eg}

\begin{eg}
A `maximal' example:

$A_1 = [012; \alpha \alpha \alpha]$, $B_1 = [012; \beta \alpha \alpha]$, $B_2 = [012; \alpha \beta \alpha]$, $B_3 = [012; \alpha \alpha \beta]$, 
$A_2 = [013; \alpha \alpha \alpha]$, $C_1 = [013; \gamma \alpha \alpha]$, $B_4 = [013; \alpha \beta \alpha]$, $B_5 = [013; \alpha \alpha \beta]$, 
$A_3 = [023; \alpha \alpha \alpha]$, $C_2 = [023; \gamma \alpha \alpha]$, $C_3 = [023; \alpha \gamma \alpha]$, $B_6 = [023; \alpha \alpha \beta]$, 
$A_4 = [123; \alpha \alpha \alpha]$, $C_4 = [123; \gamma \alpha \alpha]$, $C_5 = [123; \alpha \gamma \alpha]$, $C_6 = [123; \alpha \alpha \gamma]$. 
Let ${\cal F}_2 = \{A_i : 1 \le i \le 4 \} \cup \{B_i : 1 \le i \le 6 \} \cup \{C_i : 1 \le i \le 6 \}$. Then $Forb({\cal F}_2)$ is an amalgamation class, giving homogeneous 4-generic $G_2$ with $O(G_2) = {\cal F}_2$.

Observe that ${\cal F}_2$ contains $C_{ij}$-cover sets for each distinct $i,j \in \{0,1,2,3\}$. It contains the following `overlapping' omission sets: \\
$\{ A_1, B_1, A_2, C_1\}$ is a $C_{01}^{23}$-omission set, and $\{ A_3, B_6, A_4, C_6\}$ is a corresponding $C_{23}^{01}$-omission set; 
$\{ A_1, B_2, A_3, C_2\}$ is a $C_{02}^{13}$-omission set, and $\{ A_2, B_5, A_4, C_5\}$ is a corresponding $C_{13}^{02}$-omission set; 
$\{ A_1, B_3, A_4, C_4\}$ is a $C_{12}^{03}$-omission set, and $\{ A_2, B_4, A_3, C_3\}$ is a corresponding $C_{03}^{12}$-omission set. 

In fact ${\cal F}_2$ is `maximal', that is, $Forb({\cal F}_2)$ is an amalgamation class, but there is no ${\cal F'} \supset {\cal F}_2$ in the language $L$ such that $Forb({\cal F'})$ is an amalgamation class.
\end{eg}

\begin{eg}
Another `maximal' example:

$A_1 = [012; \alpha \alpha \alpha]$, 
$A_2 = [012; \beta \alpha \alpha]$, 
$A_3 = [013; \alpha \alpha \alpha]$, 
$A_4 = [013; \gamma \alpha \alpha]$, 
$A_5 = [023; \alpha \alpha \alpha]$, 
$A_6 = [023; \alpha \alpha \beta]$, 
$A_7 = [123; \alpha \alpha \alpha]$, 
$A_8 = [123; \alpha \alpha \gamma]$,

$B_1 = [012; \alpha \alpha \gamma]$, 
$B_2 = [012; \beta \alpha \gamma]$, 
$B_3 = [013; \alpha \gamma \alpha]$, 
$B_4 = [013; \gamma \gamma \alpha]$, 
$B_5 = [023; \alpha \gamma \alpha]$, 
$B_6 = [023; \alpha \gamma \beta]$, 
$B_7 = [123; \gamma \alpha \alpha]$, 
$B_8 = [123; \gamma \alpha \gamma]$,

$C_1 = [012; \alpha \gamma \alpha]$, 
$C_2 = [012; \beta \gamma \alpha]$, 
$C_3 = [013; \alpha \alpha \gamma]$, 
$C_4 = [013; \gamma \alpha \gamma]$, 
$C_5 = [023; \gamma \alpha \alpha]$, 
$C_6 = [023; \gamma \alpha \beta]$, 
$C_7 = [123; \alpha \gamma \alpha]$, 
$C_8 = [123; \alpha \gamma \gamma]$,

$D_1 = [012; \alpha \gamma \gamma]$, 
$D_2 = [012; \beta \gamma \gamma]$, 
$D_3 = [013; \alpha \gamma \gamma]$, 
$D_4 = [013; \gamma \gamma \gamma]$, 
$D_5 = [023; \gamma \gamma \alpha]$, 
$D_6 = [023; \gamma \gamma \beta]$, 
$D_7 = [123; \gamma \gamma \alpha]$, 
$D_8 = [123; \gamma \gamma \gamma]$.

Let ${\cal F}_3 = \{A_i : 1 \le i \le 8 \} \cup \{B_i : 1 \le i \le 8 \} \cup \{C_i : 1 \le i \le 8 \} \cup \{D_i : 1 \le i \le 8 \}$. Then $Forb({\cal F}_3)$ is an amalgamation class (and as in the previous example, it is maximal), giving homogeneous 4-generic $G_3$ with $O(G_3) = {\cal F}_3$.

Observe that ${\cal F}_3$ only contains $C_{01}$-cover sets and $C_{23}$-cover sets. It
contains many overlapping pairs of $C_{01}^{23}$-omission sets and corresponding $C_{23}^{01}$-omission sets. 
For example, if $X \in \{ A,B,C,D \}$, then $\{X_1, X_2, X_3, X_4\}$ and $\{X_5, X_6, X_7, X_8\}$ are corresponding omission sets. 
Also, $\{A_1, A_2, B_3, B_4\}$ and $\{B_5, B_6, A_7, A_8\}$ are corresponding omission sets; as are $\{B_1, B_2, C_3, C_4\}$ and $\{A_5, A_6, D_7, D_8\}$. There are 16 such pairs in all, since in this example there are 16 possible omission set codes $(0,1,2,3; c_{02}, c_{03}, c_{12}, c_{13})$ for $c_{02}, c_{03}, c_{12}, c_{13} \in \{\alpha, \gamma\}$.
\end{eg}

\section{Non-complication for $m$-generic graphs}

We work towards showing that as in the monochromatic case in \cite{jenkinson2}, even for multipartite graphs on more than four parts, omission sets 
are the key to determining amalgamation classes. That is, to characterize the homogeneous graphs, things do not get more complicated than 
corresponding omission sets---to verify the amalgamation property for a given class $Forb({\cal F})$ we do not have to worry about any more 
complicated configurations of omitted monics. Formally, we aim to show that Lemma~\ref{4partite} generalizes to $m$-partite graphs with $m \ge 4$. The 
`non-complication theorem' is the following:

\begin{thm} \label{mgen}
Let $G$ be a homogeneous $m$-generic graph, and let ${\cal A}$ be a $C_{ij}$-cover set in $O(G)$. Then there is a $C_{ij}^{kl}$-omission set in $O(G)$ based on ${\cal A}$ (for some distinct $k,l \in \{0, \ldots, m-1 \} - \{i,j\}$).
\end{thm}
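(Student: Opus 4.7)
The plan is to prove Theorem~\ref{mgen} by induction on $m$, with the base case $m = 4$ being Lemma~\ref{4partite}. Let $\mathcal{A} = \{A^c : c \in C_{ij}\}$ be a $C_{ij}$-cover set in $O(G)$ of minimum size, so $|\mathcal{A}| = |C_{ij}|$; after relabelling, assume $\{i, j\} = \{0, 1\}$. Write $I^c$ for the set of parts on which $A^c$ is defined, so $\{0, 1\} \subseteq I^c$ and $|I^c| \geq 3$ by Remark~\ref{A>2}.

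First I would dispose of the case where some part $V_{i^*}$, $i^* \in \{2, \ldots, m-1\}$, is not involved in any $A^c$ (i.e.\ $i^* \notin I^c$ for all $c$). Then $\mathcal{A}$ lives in the restriction $G'$ of $G$ to the remaining $m-1$ parts; by Lemma~\ref{restriction}, $G'$ is homogeneous and $(m-1)$-generic, and $\mathcal{A} \subseteq O(G')$ because minimal omission is preserved under restriction (a proper subgraph of $A^c$ defined on these parts is realised in $G'$ iff in $G$). The inductive hypothesis applied to $G'$ yields a $C_{01}^{kl}$-omission set in $O(G') \subseteq O(G)$ based on $\mathcal{A}$, as required.

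Otherwise every part $V_p$ for $p \in \{2, \ldots, m-1\}$ is involved in some $A^c$, and I would mimic the construction in the proof of Lemma~\ref{4partite}. Build an auxiliary graph $H$ with vertices $v_0 \in V_0$, $v_1 \in V_1$, and $a_p^c \in V_p$ for each $c \in C_{01}$ and $p \in I^c \setminus \{0, 1\}$. Colour the edges within $\{v_0, v_1\} \cup \{a_p^c : p \in I^c \setminus \{0,1\}\}$ as in $A^c$ (so this subgraph equals $A^c$ except for the undetermined $v_0 v_1$). For each ``cross-edge'' $a_p^c a_q^{c'}$ with $c \neq c'$, attempt to choose a colour $d \in C_{pq}$ such that both triangles $\langle v_0, a_p^c, a_q^{c'} \rangle$ and $\langle v_1, a_p^c, a_q^{c'} \rangle$ (with $d$ on the cross-edge and the other colours as already specified) are realised in $G$. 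If a coherent such choice exists, every monic subgraph of $H - \{v_0\}$ and $H - \{v_1\}$ is realised in $G$: monochromatic-in-$c$ monics are proper subgraphs of $A^c$, while hybrid triangles are realised by construction. Corollary~\ref{monic} then gives that both $H - \{v_0\}$ and $H - \{v_1\}$ embed in $G$; homogeneity amalgamates these into an embedding of $H$ with some colour $c$ on $v_0 v_1$, realising $A^c$ and contradicting $A^c \in O(G)$. If instead some cross-edge $a_k^\alpha a_l^\beta$ admits no valid colour, then for every $d \in C_{kl}$ at least one of $\langle v_0, a_k^\alpha, a_l^\beta \rangle$, $\langle v_1, a_k^\alpha, a_l^\beta \rangle$ (with $d$ on the cross-edge) is in $O(G)$; these omitted triangles all share the colours inherited from $A^\alpha$ (on $E_{0k}, E_{1k}$) and from $A^\beta$ (on $E_{0l}, E_{1l}$), so they form a $C_{kl}^{01}$-omission set in $O(G)$. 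Lemma~\ref{corros} then produces a corresponding $C_{01}^{kl}$-omission set in $O(G)$ whose code is witnessed by $A^\alpha, A^\beta \in \mathcal{A}$, so it is based on $\mathcal{A}$.

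The main obstacle is that for $m \geq 5$, monic subgraphs of $H - \{v_0\}$ may span more than three parts and so involve several cross-edges at once; triangle-by-triangle choices of cross-edge colours do not immediately realise every such hybrid monic, yet Corollary~\ref{monic} demands that \emph{all} monic subgraphs be realised. Handling this rigorously requires extending the obstruction analysis to higher-arity hybrid monics: any such obstructing monic is either dealt with by passing to the restriction on the smaller set of parts it involves and invoking the inductive hypothesis there, or absorbed by the cover-set-improvement step at the end of the proof of Lemma~\ref{4partite}, which replaces an $A^c \in \mathcal{A}$ by a triangle drawn from an auxiliary omission set and thereby strictly decreases $\sum_c |I^c|$. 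Well-foundedness of this quantity ensures the recursion terminates, eventually forcing the situation back into the first case and closing the induction.
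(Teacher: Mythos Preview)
Your overall plan (induction on $m$, base case Lemma~\ref{4partite}) matches the paper's, and your first reduction (if some part is unused, restrict and apply the inductive hypothesis) is correct. You have also correctly identified the central obstacle: for $m \ge 5$ the naive $H$-construction fails because hybrid monic subgraphs of $H - \{v_0\}$ can involve several cross-edges at once, and triangle-by-triangle colour choices need not realise them. However, your final paragraph does not close this gap.

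Neither of your two proposed fixes works as stated. First, ``pass to the restriction on the smaller set of parts and invoke the inductive hypothesis there'': an obstructing hybrid monic $M$ is a single omitted graph, not a cover set, so the inductive hypothesis says nothing about it; and even when the obstruction does manufacture a cover set in a restriction, there is no reason that cover set is $\mathcal{A}$, or that the resulting omission set has a code in $\Omega_{01}(\mathcal{A})$. Second, ``replace $A^c$ by a triangle drawn from an auxiliary omission set, decreasing $\sum_c |I^c|$'': this is circular, since the auxiliary $C_{01}$-omission set you would draw the triangle from is exactly what you are trying to produce. In Lemma~\ref{4partite} that replacement step is invoked only \emph{after} the omission set $S_{01}$ has already been found, to establish the ``furthermore'' clause; it is not available as a bootstrap.

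The paper resolves the obstacle by a quite different mechanism. Rather than colouring all cross-edges of a single large $H$, it first replaces $\mathcal{A}$ by a \emph{good} (in fact \emph{star}) cover set $\mathcal{A}' \le \mathcal{A}$, found via an explicit search algorithm: goodness means there is a key monic $A^0$ such that each $A^{c/0}$ is either a subgraph of $A^0$ or is realised in $G$. With this control in hand, the paper builds for each maximal $A^c$ an amalgam $B^{c*}$ of $A^{c/0}$ with a two-vertex-in-$V_2$ perturbation $A^*$ of $A^0$, adding one part at a time. At each step where the construction might fail, the failure is shown to produce a new $C_{01}$-cover set which is either defined on fewer than $m$ parts or is $(i,j)$-free, contradicting the minimal choice of $m$ or of $\mathcal{A}$. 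The hybrid-monic problem is thus sidestepped, not by analysing hybrids directly, but by arranging via goodness that the pieces $A^{c/0}$ are already realised, so the only genuinely new edges at each stage are the two $E_{2n}$-edges, and a failure there yields a cover set of the controlled form.
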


Furthermore, as in the quadripartite case, such an omission set must in fact be based on triangles from the cover set. Before embarking on the proof of the main non-complication theorem, we shall prove this strengthening result which restricts the kinds of cover sets that can arise.

\begin{lem} \label{triangleomset}
If $G$ is a homogeneous $m$-generic graph, ${\cal A}$ is a $C_{ij}$-cover set in $O(G)$, and there is a $C_{ij}^{kl}$-omission set in $O(G)$ based on ${\cal A}$, then there is a $C_{ij}^{kl}$-omission set in $O(G)$ based on triangles from ${\cal A}$ (so in particular ${\cal A}$ must contain some $ijk$-triangles and some $ijl$-triangles for some distinct $k,l \in \{0, \ldots, m-1 \} - \{i,j\}$).
\end{lem}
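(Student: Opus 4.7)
The plan is to adapt the closing paragraph of the proof of Lemma~\ref{4partite} to the $m$-partite setting. Fix a $C_{ij}^{kl}$-omission set $S_{ij} \subseteq O(G)$ based on ${\cal A}$, with code $\Omega = (i,j,k,l; c_{ik}, c_{il}, c_{jk}, c_{jl})$ and base monics $A^{c_0}, A^{d_0} \in {\cal A}$ providing $\Omega$'s $E_{ik}, E_{jk}$- and $E_{il}, E_{jl}$-colours respectively. If $A^{c_0}$ is an $ijk$-triangle and $A^{d_0}$ is an $ijl$-triangle, then $S_{ij}$ itself is based on triangles from ${\cal A}$ and there is nothing to prove. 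Otherwise I argue by contradiction and, without loss of generality, suppose that ${\cal A}$ contains no $ijl$-triangle whose $E_{il}, E_{jl}$-colours match $\Omega$ (so in particular $A^{d_0}$ is not an $ijl$-triangle).

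The heart of the argument is an iterative replacement inside ${\cal A}$. For each $A^c \in {\cal A}$ which is defined on $V_l$ and provides $\Omega$'s $E_{il}, E_{jl}$-colours, the contradiction hypothesis forces $A^c$ to be defined on strictly more than $\{i,j,l\}$ parts. Its $ijl$-subgraph is precisely the $\Omega$-matching triangle $T_l^c$, and as a proper subgraph of the minimally omitted $A^c$ it is realized in $G$. Since $S_{ij}$ contains, for each $c \in C_{ij}$, at least one of $T_k^c$ or $T_l^c$, and $T_l^c$ is realized, the companion $ijk$-triangle $T_k^c$ must lie in $S_{ij} \subseteq O(G)$. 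Replace $A^c$ by this triangle $T_k^c$ in ${\cal A}$. After finitely many such replacements I obtain a $C_{ij}$-cover set ${\cal A}_*$ in $O(G)$ no member of which provides $\Omega$'s $E_{il}, E_{jl}$-colours.

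The contradiction is then obtained by passing to the restriction $G'$ of $G$ to the four parts $V_i, V_j, V_k, V_l$, which by Lemma~\ref{restriction} is homogeneous, and in which $S_{ij} \subseteq O(G')$ since the triangle members of $O(G)$ supported on these four parts remain minimally omitted in $G'$. Continuing the elimination scheme inside $G'$ in the style of the closing paragraph of Lemma~\ref{4partite} --- namely replacing each remaining non-$ijk$-triangle member of ${\cal A}_*$ (restricted to parts $\subseteq \{i,j,k,l\}$) whose $ijk$-subgraph matches $\Omega$ by the forced triangle from $S_{ij}$ --- eventually collapses the cover to a $C_{ij}$-cover set in $O(G')$ consisting only of $ijk$-triangles, i.e.~defined on just the three parts $V_i, V_j, V_k$, contradicting Remark~\ref{same3}. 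The main obstacle is the bookkeeping required to ensure that the iteration in $G'$ truly collapses to an all-$ijk$-triangle cover; this is controlled by combining Lemma~\ref{restriction} with the fact that $S_{ij}$ itself is unchanged throughout the process, so that its omission property can be re-invoked at every step to force the next replacement triangle into $O(G)$.
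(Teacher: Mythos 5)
Your opening is sound: if $A^{c_0}, A^{d_0}$ are already triangles there is nothing to prove, and otherwise one may assume (WLOG, swapping $k$ and $l$) that no $ijl$-triangle of ${\cal A}$ matches $\Omega$ on the $l$-side. Your first round of replacements is also a legitimate move and is genuinely in the spirit of the final paragraph of Lemma~\ref{4partite}: for each $A^c \in {\cal A}$ matching $\Omega$'s $E_{il}, E_{jl}$-colours, the proper subgraph $T^c_l$ is realized, forcing $T^c_k \in S_{ij} \subseteq O(G)$, and you may replace $A^c$ by $T^c_k$ to get a cover set ${\cal A}_*$ with no member matching $\Omega$ on the $l$-side.

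The gap is in the finale. After that first round, the remaining members of ${\cal A}_*$ which are not $T^c_k$'s are the original $A^c$'s that did \emph{not} match $\Omega$'s $l$-colours --- and for those there is no lever at all: the ``forced triangle'' trick only applies to monics whose $ijl$- (or $ijk$-) subgraph literally equals $T^c_l$ (or $T^c_k$), i.e.\ to monics that match $\Omega$ on one side. A member whose colours on $V_l$ (or $V_k$) differ from $\Omega$'s, or which is not even defined on $V_k$ or $V_l$, is untouched. Worse, for the members you \emph{can} still reach (those whose $ijk$-subgraph matches $\Omega$ and is a proper subgraph), the forcing goes the wrong way: $T^c_k$ realized implies $T^c_l \in S_{ij}$, so your ``forced triangle from $S_{ij}$'' is an $ijl$-triangle, not an $ijk$-triangle. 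Thus the process does not converge to an all-$ijk$-triangle cover set and cannot contradict Remark~\ref{same3}. The passage to the restriction $G'$ does not repair this: ${\cal A}_*$ is not contained in $O(G')$ (restricting a minimally omitted monic that uses a fifth part produces a proper subgraph, which is realized), so there is no cover set in $O(G')$ on which to run the iteration.

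The paper's proof circumvents this by a descent that is genuinely different in structure. It fixes $S_{01} = {\cal T}_k \cup {\cal T}_l$ \emph{optimal} (maximal $|{\cal T}_l|$), observes $\alpha \in \chi_l$ since $T^\alpha_k \subset A^\alpha$ is realized, disposes of the base case $|\chi_l| = 1$ via Lemma~\ref{agreeing}, and otherwise forms ${\cal A}' = {\cal T}_l \cup \{A^c : c \in \chi_k\}$ --- replacing with the $l$-side triangles \emph{of the omission set}, not with the $k$-side ones as you do --- and then shows $\Omega \notin \Omega_{01}({\cal A}')$, so ${\cal A}' < {\cal A}$ in the finite ``based on'' order, and iterates until the finiteness of $\Omega_{01}({\cal A})$ gives the contradiction. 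You have no analogue of this terminating quantity: your $\Omega \notin \Omega_{ij}({\cal A}_*)$ is implicitly established, but there is no subsequent re-application of the hypothesis to ${\cal A}_*$ (you would need to produce an omission set based on ${\cal A}_*$), and no guarantee the iteration progresses. So the proposal, as written, is missing the key idea that makes the argument close.
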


Our goal is to formulate explicit conditions which will determine whether a given family ${\cal F}$ of monic $L$-graphs equals $O(G)$ for some homogeneous $L$-graph $G$. Lemma~\ref{triangleomset} says that if we can find a cover set in ${\cal F}$ which does not contain two types of triangles, then ${\cal F}$ does not have the form $O(G)$. 

Let us further develop the terminology and notation which will be used in the proofs of these results. 
If ${\cal A}$ is a $C_{ij}$-cover set in $O(G)$, we let $\Omega_{ij}({\cal A}) := \{ (i,j,k,l; c_{ik},c_{il},c_{jk},c_{jl}) :$ there are 
$A^c \in {\cal A}$ such that $F_{A^c}(E_{ij})=c, ~F_{A^c}(E_{ik})=c_{ik}, ~F_{A^c}(E_{jk})=c_{jk}$, and $A^d \in {\cal A}$ such that 
$F_{A^d}(E_{ij})=d, ~F_{A^d}(E_{il})=c_{il}, ~F_{A^d}(E_{jl})=c_{jl} \}$. Thus $\Omega_{ij}({\cal A})$ denotes the set of all `possible' codes for 
$C_{ij}^{kl}$-omission sets based on ${\cal A}$. That is, if there is a $C_{ij}^{kl}$-omission set in $O(G)$ based on ${\cal A}$, then it must have 
some code in $\Omega_{ij}({\cal A})$. 
But of course the converse does not necessarily hold. In particular, note that in the definition of $\Omega_{ij}({\cal A})$, the monics $A^c, A^d$ from which we build a code are not necessarily distinct (so such codes built from a single monic are contained in $\Omega_{ij}({\cal A})$); however if there is a $C_{ij}^{kl}$-omission set in $O(G)$ with code $\Omega \in \Omega_{ij}({\cal A})$, then it is clear that there is no minimally omitted monic which agrees with the code on all four of the relevant edge-types, as we now show.

\begin{lem} \label{agreeing}
Let $G$ be a homogeneous $m$-generic graph, and let $S_{ij} \subseteq O(G)$ be a $C_{ij}^{kl}$-omission set with code $\Omega$. 
Then no $A \in O(G)$ is defined on all four parts $V_i, V_j, V_k, V_l$ and agrees with $\Omega$ on the $E_{ik}, E_{il}, E_{jk}, E_{jl}$-edges. 
\end{lem}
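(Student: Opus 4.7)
The plan is to argue by contradiction. Suppose some $A \in O(G)$ is defined on the four parts $V_i, V_j, V_k, V_l$ and agrees with $\Omega = (i,j,k,l; c_{ik}, c_{il}, c_{jk}, c_{jl})$ on the four edge-types $E_{ik}, E_{il}, E_{jk}, E_{jl}$. By Theorem~\ref{nonmonic}, $A$ is monic, so it has exactly one vertex in each part on which it is defined; in particular its unique $E_{ij}$-edge has a well-defined colour, which I shall call $c \in C_{ij}$. Since $A$ spans at least four parts, it has at least four vertices, and hence any triangle subgraph of $A$ is a proper induced subgraph of $A$.

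Next I would identify inside $A$ its $ijk$-subtriangle and its $ijl$-subtriangle. Using the notation $T_k^c, T_l^c$ introduced just before Remark~\ref{same3}, the agreement hypothesis together with $F_A(E_{ij}) = c$ forces the $ijk$-subtriangle of $A$ to be a copy of $T_k^c$ (it has $E_{ij}$-edge of colour $c$, $E_{ik}$-edge of colour $c_{ik}$, and $E_{jk}$-edge of colour $c_{jk}$), and symmetrically the $ijl$-subtriangle of $A$ to be a copy of $T_l^c$. Because $A$ is minimally omitted, both of these proper subgraphs must be realized in $G$.

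On the other hand, $S_{ij} \subseteq O(G)$ is a $C_{ij}^{kl}$-omission set with code $\Omega$, so by the defining property of such a set, for this particular $c \in C_{ij}$ at least one of $T_k^c, T_l^c$ lies in $S_{ij}$ and is therefore omitted from $G$. This contradicts the previous paragraph, completing the argument. The proof is essentially an unpacking of definitions combined with a single appeal to Theorem~\ref{nonmonic}, so I do not anticipate any real obstacle; the only subtlety worth flagging is that monicity of $A$ is what guarantees the two triangle subgraphs are \emph{proper} (hence realized by minimal omission), and this in turn needs $A$ to span at least the four distinct parts $V_i, V_j, V_k, V_l$, which is exactly the hypothesis of the lemma.
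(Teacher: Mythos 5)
Your proof is correct and follows essentially the same route as the paper's: identify the colour $c$ of the $E_{ij}$-edge of $A$, observe that the $ijk$- and $ijl$-subtriangles of $A$ are exactly $T_k^c$ and $T_l^c$, note that both are proper subgraphs of the minimally omitted $A$ and hence realized, and contradict the defining property of the omission set $S_{ij}$ that at least one of them must lie in $O(G)$. The only cosmetic difference is that you make the appeal to Theorem~\ref{nonmonic} (monicity of members of $O(G)$) explicit, which the paper leaves tacit; your observation that monicity plus spanning four parts is what makes the subtriangles \emph{proper} is exactly the right point to flag.
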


\begin{proof}
Let $\Omega = (i,j,k,l; c_{ik},c_{il},c_{jk},c_{jl})$, and suppose that $A \in O(G)$ agrees with $\Omega$ on the relevant edge-types, and 
$F_A(E_{ij})=\alpha$. As previously, for $c \in C_{ij}$, define $T_k^c, T_l^c$ to be the $ijk$-triangle and $ijl$-triangle respectively agreeing with 
$\Omega$ and with $c$-coloured $E_{ij}$-edges. Then one of $T_k^\alpha$ or $T_l^\alpha$ is included in the omission set $S_{ij}$, so at least one of 
them is minimally omitted from $G$. But both of these triangles are proper subgraphs of $A$, so $A$ itself cannot be minimally omitted, a 
contradiction. 
\end{proof}

If $\Omega \in \Omega_{ij}({\cal A})$ then we shall say that the code $\Omega$ is \emph{based on} ${\cal A}$. Then Theorem~\ref{mgen} says that for 
each $C_{ij}$-cover set ${\cal A}$ in $O(G)$, there is some code $\Omega$ based on ${\cal A}$ such that there is a $C_{ij}^{kl}$-omission set in 
$O(G)$ with code $\Omega$. 

We further extend the `based on' terminology to introduce an ordering on $C_{ij}$-cover sets as follows. For ${\cal A}, {\cal B}$ both $C_{ij}$-cover
sets in $O(G)$, we write ${\cal A} \le {\cal B}$, and say that ${\cal A}$ is \emph{based on} ${\cal B}$, if 
$\Omega_{ij}({\cal A}) \subseteq \Omega_{ij}({\cal B})$. That is, ${\cal A} \le {\cal B}$ if each code based on ${\cal A}$ is also a code based on 
${\cal B}$. Furthermore write ${\cal A} < {\cal B}$ (and say that ${\cal A}$ is \emph{strictly} based on ${\cal B}$) if 
$\Omega_{ij}({\cal A}) \subset \Omega_{ij}({\cal B})$ (roughly speaking, ${\cal B}$ has strictly more codes based on it than ${\cal A}$).

To prove Lemma~\ref{triangleomset}, we suppose for a contradiction that there is a cover set ${\cal A}$ in $O(G)$ such that there is a corresponding omission set in $O(G)$ based on ${\cal A}$, but none based on triangles from ${\cal A}$. 
By previous results, we know that certain `bad' cover sets cannot exist in a valid $O(G)$. 
The notion `bad' is informal (it will not be explicitly defined) but by it we understand some non-valid cover set, whose existence contradicts a previous result. 
For instance, note that by Lemma~\ref{agreeing}, if ${\cal A} \subseteq O(G)$ is a cover set, then a corresponding omission set based on ${\cal A}$ cannot be based on just one member of ${\cal A}$. So if ${\cal A}$ is a $C_{ij}$-cover set in $O(G)$ such that for each code in $\Omega_{ij}({\cal A})$ there is some monic in ${\cal A}$ which agrees with the whole of the code, then ${\cal A}$ is bad. Similarly, a cover set defined on only three parts is bad since it contradicts Remark~\ref{same3}; and a cover set defined on only four parts which contains only one kind of triangle is bad since it contradicts Lemma~\ref{4partite}. 
Under the given conditions, the non-existence of such `bad' cover sets can then be used to obtain the desired contradiction. 

\begin{proof}[Proof of Lemma \ref{triangleomset}]
Suppose for a contradiction that ${\cal A}$ is a $C_{01}$-cover set in $O(G)$, such that there is a $C_{01}$-omission set in $O(G)$ based on 
${\cal A}$, but none based on triangles from ${\cal A}$. We may suppose that $|{\cal A}|=|C_{01}|$, and ${\cal A} = \{ A^c : c \in C_{01} \}$, where 
each $A^c$ is a monic with $c$-coloured $E_{01}$-edge and all other edges $c'$-coloured. Let $S_{01}$ be a $C_{01}^{kl}$-omission set (for distinct 
$k,l \in \{2,\ldots,m-1\}$) based on $A^\alpha, A^\beta \in {\cal A}$, with code 
$\Omega = (0,1,k,l; \alpha' \in C_{0k}, ~\beta' \in C_{0l}, ~\alpha' \in C_{1k}, ~\beta' \in C_{1l})$. So $A^\alpha$ is an $I^\alpha$-monic with 
$0, 1, k \in I^\alpha$, and $A^\beta$ is an $I^\beta$-monic with $0, 1, l \in I^\beta$. Since $S_{01}$ is not based on triangles, without loss of 
generality assume that there are no $01k$-triangles in ${\cal A}$ agreeing with $\Omega$, so in particular $|I^\alpha| >3$.

As previously, for $c \in C_{01}$, let $T_k^c, T_l^c$ be the $01k$-triangle and $01l$-triangle respectively agreeing with $\Omega$ and with 
$c$-coloured $E_{01}$-edges. Then $S_{01} = {\cal T}_k \cup {\cal T}_l$ where ${\cal T}_k = \{ T_k^c : c \in \chi_k \}$ and \mbox{${\cal T}_l = \{ T_l^c : c \in \chi_l \}$} for some $\chi_k, \chi_l \subset C_{01}$ with $\chi_k \cup \chi_l = C_{01}$. 

We may assume that $S_{01}$ is optimal in the following sense: $|S_{01}|=|C_{01}|$ (so $\chi_k \cap \chi_l = \emptyset$), and we maximize 
$|{\cal T}_l|$ (equivalently maximize $|\chi_l|$). That is, if for some $c \in C_{01}$ both $T_k^c$ and $T_l^c$ are minimally omitted, then we choose 
to put $T_l^c$ and not $T_k^c$ in $S_{01}$. 
Note that since $A^\alpha$ is minimally omitted (but is not a triangle), $T_k^\alpha \subset A^\alpha$ is realized in $G$, so we must have $T_l^\alpha$ in $S_{01}$ (that is, $\alpha \in \chi_l$). 

If $|\chi_l|=1$, then $\chi_l = \{\alpha\}$ and $\chi_k = C_{01} - \{\alpha\}$ (so note that $\beta \notin \chi_l$, so $A^\beta$ is not a triangle). 
Consider ${\cal T}_k \cup \{A^\alpha\}$, which is a $C_{01}$-cover set in $O(G)$. This is a bad cover set: it consists of one non-triangle monic and all the other monics are $01k$-triangles, but this contradicts Lemma~\ref{4partite}. In fact since $A^\alpha$ and all of the $01k$-triangles agree on their $E_{0k}, E_{1k}$-edges (each has $\alpha'$-coloured $E_{0k}$-edges and $E_{1k}$-edges), $A^\alpha$ must agree with any code based on this cover set, so this contradicts Lemma~\ref{agreeing}.

Thus we may assume that $|\chi_l|>1$. Now consider ${\cal A'} = {\cal T}_l \cup \{ A^c : c \in \chi_k \}$. This is a $C_{01}$-cover set in $O(G)$, and 
clearly $\Omega_{01}({\cal A'}) \subseteq \Omega_{01}({\cal A})$ so ${\cal A'} \le {\cal A}$. 
We show that actually ${\cal A'} < {\cal A}$, since there is no monic in ${\cal A'}$ agreeing with $A^\alpha \in {\cal A}$ on the $E_{0k}, E_{1k}$-edges, and so $\Omega \notin \Omega_{01}({\cal A'})$. 
Otherwise, suppose we have such a monic. Then since this monic is not a $01l$-triangle, it must be $A^\gamma$ for some $\gamma \in \chi_k$. 
Clearly $A^\gamma$ is not a triangle, since by our initial assumptions there is no $01k$-triangle in ${\cal A}$ agreeing with $\Omega$. 
But then $T_k^\gamma \subset A^\gamma$ is realized in $G$, which contradicts the fact that $\gamma \in \chi_k$. 
So indeed, ${\cal A'} < {\cal A}$. 

Now we run through the argument again, this time with an omission set $S'_{01}$ with code $\Omega'$ based on ${\cal A'}$ (but not based on triangles). By induction, we find a chain of $C_{01}$-cover sets ${\cal A} > {\cal A'} > {\cal A''} > \ldots$ such that there is no corresponding omission set based on triangles from any of them. But since we are working in a finite language, $\Omega_{01}({\cal A})$ is finite, so this chain cannot go on indefinitely, and we have a contradiction. 
\end{proof}

\subsection{Proving the non-complication theorem} \label{sectionm}

We now turn to the proof of the full non-complication theorem (Theorem~\ref{mgen}). The idea of the proof is to assume that there is some homogeneous 
$m$-generic graph $G$ with a cover set in $O(G)$ (which we may assume covers $C_{01}$) but no corresponding omission set based on that cover set, and 
aim for a contradiction. So assuming such homogeneous graphs exist, we consider a minimal case---that is, with $m$ as small as possible such that 
$O(G)$ contains a $C_{01}$-cover set which does not have an omission set based on it, and ${\cal A}$ is a `minimal' such $C_{01}$-cover set. First 
note that by Lemma~\ref{4partite}, certainly $m \ge 5$; and ${\cal A}$ is defined on all $m$ parts of $G$. To be `minimal', we require that ${\cal A}$ 
satisfies a number of further conditions. Thus justification for assuming these conditions will be given in the results that follow. At this stage we 
just describe what these `minimality' conditions are.

Firstly, we may assume that ${\cal A}$ contains the minimum number of monics for a $C_{01}$-cover set, that is $|{\cal A}| = |C_{01}|$. Next 
${\cal A}$ should be minimal with respect to the `based on' ordering $<$ (that is, there is no $C_{01}$-cover set ${\cal B}$ in $O(G)$ with 
$\Omega_{01}({\cal B}) \subset \Omega_{01}({\cal A})$). In fact, we may assume that ${\cal A}$ is a special kind of such a cover set, which we shall call 
\emph{good}. A $C_{01}$-cover set is \emph{good} if there is a particular colour $\alpha \in C_{01}$ (called the \emph{key colour}) such that for each monic in the cover set, if we recolour its
$E_{01}$-edge by $\alpha$, then the resulting monic is either a subgraph of $A^\alpha$ (the monic in the cover set with a $\alpha$-coloured $E_{01}$-edge, called the \emph{key monic}) or is 
realized in $G$.

Furthermore, we may assume that ${\cal A}$ is a good cover set such that the key monic is either defined on at least four parts, or is defined on exactly three parts and there are no other monics in ${\cal A}$ defined on all of the remaining parts. We call such a good cover set a \emph{star} cover set.

Finally, if possible, we choose ${\cal A}$ which is $(i,j)$-free for some distinct $i,j \in \{2, \ldots, m-1\}$; where a cover set ${\cal A}$ is 
\emph{$(i,j)$-free} if no members of ${\cal A}$ are defined on both $V_i$ and $V_j$. 

\subsubsection{`Minimal' cover sets: minimum number of members}

Before embarking on the proof of the theorem, we fully describe our set-up, introduce the necessary new notions that will be used, and cover some preliminary results involving these. 

First note that each $C_{01}$-cover set in $O(G)$ is a subset of the set of all monics in $O(G)$ which are defined on $V_0$ and $V_1$. We wish to consider $C_{01}$-cover sets with exactly $|C_{01}|$ members (that is, $C_{01}$-cover sets that contain exactly one monic with $c$-coloured $E_{01}$-edge for each $c \in C_{01}$). So let $r := |C_{01}|$, $C_{01} = \{ 0,1, \ldots, r-1 \}$, and let $\Lambda$ be the set of all $C_{01}$-cover sets in $O(G)$ of size $r$.
It is straightforward to see that we may assume that `minimal' ${\cal A}$ is a $C_{01}$-cover set in $\Lambda$, since by the following, if there is a $C_{01}$-cover set in $O(G)$ without an omission set based on it, then there is such a $C_{01}$-cover set of size $r$.

\begin{lem} \label{lambda}
If for each ${\cal A} \in \Lambda$, there is an $C_{01}$-omission set in $O(G)$ based on ${\cal A}$, then in fact this holds for each $C_{01}$-cover set in $O(G)$.
\end{lem}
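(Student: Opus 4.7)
The plan is to reduce an arbitrary $C_{01}$-cover set in $O(G)$ to one of minimum size (that is, one in $\Lambda$) by selecting a single representative for each colour, and then to transfer the omission set guaranteed by the hypothesis. The main observation is that the ``based on'' relation is monotone under extension of the cover set: if ${\cal A} \subseteq {\cal B}$ are both $C_{01}$-cover sets in $O(G)$, then any $C_{01}^{kl}$-omission set based on ${\cal A}$ is automatically based on ${\cal B}$, since the two witnessing monics $A^c, A^d \in {\cal A}$ required by the definition of ``based on'' also lie in ${\cal B}$.

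First I would take an arbitrary $C_{01}$-cover set ${\cal B} \subseteq O(G)$. By the definition of a cover set, for each colour $c \in C_{01}$ there is at least one monic in ${\cal B}$ whose $E_{01}$-edge is $c$-coloured; pick any such monic and call it $A^c$. Setting ${\cal A} := \{A^c : c \in C_{01}\}$, we obtain a subset of ${\cal B}$ (hence of $O(G)$) which is itself a $C_{01}$-cover set and has exactly $r = |C_{01}|$ members, so ${\cal A} \in \Lambda$.

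By the hypothesis of the lemma, there is a $C_{01}^{kl}$-omission set $S_{01}$ in $O(G)$ based on ${\cal A}$ for some distinct $k, l \in \{2, \ldots, m-1\}$. Unpacking the definition, this means there exist $A^c, A^d \in {\cal A}$ whose edge-colours on $E_{0k}, E_{1k}$ and on $E_{0l}, E_{1l}$ respectively match the code of $S_{01}$. Since ${\cal A} \subseteq {\cal B}$, these two witnessing monics also lie in ${\cal B}$, so $S_{01}$ is a $C_{01}^{kl}$-omission set in $O(G)$ based on ${\cal B}$, as required. I expect no substantive obstacle here; the content of the lemma is really just the monotonicity of $\Omega_{01}(-)$ with respect to inclusion, which justifies restricting attention to cover sets of minimum size in the subsequent analysis.
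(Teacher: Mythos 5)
Your proof is correct and is essentially identical to the paper's: take a size-$r$ subset ${\cal A} \subseteq {\cal B}$ that is still a $C_{01}$-cover set, apply the hypothesis to get an omission set based on ${\cal A}$, and observe that the two witnessing monics already lie in ${\cal B}$, so the same omission set is based on ${\cal B}$. The only difference is that you spell out the construction of ${\cal A}$ by picking one representative per colour, which the paper leaves implicit.
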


\begin{proof}
Let ${\cal B}$ be a $C_{01}$-cover set in $O(G)$. Consider any subset ${\cal A}$ of ${\cal B}$ of size $r$ such that ${\cal A}$ is a $C_{01}$-cover set. Then ${\cal A} \in \Lambda$, and so by the hypothesis, there is a $C_{01}$-omission set ${\cal S}$ based on ${\cal A}$, say based on monics $A^c, A^d \in {\cal A}$. Now $A^c, A^d \in {\cal B}$ since ${\cal A} \subseteq {\cal B}$, so ${\cal S}$ is also based on ${\cal B}$.
\end{proof}

We use the following standard notation for a cover set ${\cal A} \in \Lambda$: let ${\cal A} = \{ A^c : c \in C_{01} \}$, where for each 
$c \in C_{01}$, $A^c$ is an $I^c$-monic with $\{0,1\} \subset I^c \subseteq \{ 0,1,\ldots, m-1\}$, with vertices $v^c_i$ for each $i \in I^c$, with 
$c$-coloured $E_{01}$-edge and all other edges $c'$-coloured. 

\subsubsection{`Minimal' cover sets: minimal in `based on' ordering}

Next we stated that `minimal' ${\cal A}$ should be minimal with respect to the `based on' ordering $<$. Roughly speaking, this means that the monics 
in the cover set have as much agreement as possible. In fact, we claimed that we may assume that this `minimal' ${\cal A}$ is a good cover 
set---which tells us that certain other monics are realized. In the following, we formalize these ideas (which involves quite a bit of ground work), with the ultimate goal of proving that we can indeed assume that `minimal' ${\cal A}$ is good by showing that given any cover set we can always find a good cover set based on it.

First we set up some further terminology and notation which will be useful. So consider some ${\cal A} \in \Lambda$. The set $\Omega_{01}({\cal A})$ of all codes based on ${\cal A}$ tells us something about the amount of agreement between the monics in the cover set; but we wish to further quantify this. Since all members of ${\cal A}$ differ on their $E_{01}$-edges, clearly none is a subgraph of any other. However we can consider `almost subgraphs'; that is, when one graph is a subgraph of another except for their differing $E_{01}$-edges. 
For $c,d \in C_{01}$, define $A^{d/c}$ to be an isomorphic copy of $A^d$ except with $c$-coloured $E_{01}$-edge (rather than $d$-coloured). 
Now if $A^c$ is a subgraph of $A^{d/c}$, then we say that $A^c$ is an \emph{off-$E_{01}$-subgraph} of $A^d$, and write $A^c \le_{01} A^d$.
Furthermore, write $A^c <_{01} A^d$ if $A^c$ is a proper off-$E_{01}$-subgraph of $A^d$.

The relation $\le_{01}$ is a quasi-order on each $C_{01}$-cover set ${\cal A} \in \Lambda$. But note that it need not be a partial order because 
$\le_{01}$ may not be antisymmetric: $A^c \le_{01} A^d$ and $A^d \le_{01} A^c$  means that $A^c, A^d$ are isomorphic except for their differently 
coloured $E_{01}$-edges. In this case we shall say that $A^c, A^d$ are \emph{off-$E_{01}$-isomorphic}, and write $A^c =_{01} A^d$. For a cover set 
${\cal A} \in \Lambda$, we are particularly interested in the maximal elements of the structure $({\cal A}, \le_{01})$; in the following these may 
simply be referred to as \emph{maximal elements of ${\cal A}$}.

Recall the ordering on $C_{ij}$-cover sets ${\cal A}, {\cal B}$ determined by the codes based on the cover sets: ${\cal B} \le {\cal A}$ if and only 
if $\Omega_{ij}({\cal B}) \subseteq \Omega_{ij}({\cal A})$ (and then we say that ${\cal B}$ is based on ${\cal A}$), and let 
$\Lambda_{\cal A} := \{ {\cal B} \in \Lambda : {\cal B} \le {\cal A} \}$, the set of all cover sets in $\Lambda$ based on ${\cal A} \in \Lambda$, and
 for each ${\cal B} \in \Lambda_{\cal A}$, define $t_{\cal B}$ to be the number of maximal elements of $({\cal B}, \le_{01})$, up to 
off-$E_{01}$-isomorphism. Let $t({\cal A}) := \underset{{\cal B} \in \Lambda_{\cal A}} \min t_{\cal B}$; and we say that 
${\cal B} \in \Lambda_{\cal A}$ is \emph{${\cal A}$-minimal} if $t_{\cal B} = t({\cal A})$; that is, ${\cal B}$ has the least possible number of maximal elements for a cover set based on ${\cal A}$.
We first observe that $t_{\cal A} \ge 2$ for each ${\cal A} \in \Lambda$; that is, all $C_{01}$-cover sets in $\Lambda$ have at least two non-off-$E_{01}$-isomorphic maximal elements. Then clearly $t({\cal A}) \ge 2$ for each ${\cal A} \in \Lambda$ too. (In fact, by the same argument this is true for all $C_{01}$-cover sets, not just those in $\Lambda$, but we would need to extend the terminology used and change the notation a little in the proof.)

\begin{lem} \label{mincoverset}
If $G$ is a homogeneous $m$-generic graph which has a $C_{01}$-cover set in $O(G)$, then $t_{\cal A} \ge 2$ for each ${\cal A} \in \Lambda$. 
\end{lem}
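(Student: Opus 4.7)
The plan is to argue by contradiction: suppose that $t_{\cal A} = 1$ for some ${\cal A} \in \Lambda$, and aim to realize one of the supposedly omitted monics of ${\cal A}$ in $G$. Fix a maximal element $A^m \in ({\cal A}, \le_{01})$; by assumption every maximal element of ${\cal A}$ is off-$E_{01}$-isomorphic to $A^m$.

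The first step is to record that every $A^c \in {\cal A}$ satisfies $A^c \le_{01} A^m$. Indeed, each $A^c$ lies $\le_{01}$-below some maximal element $A^d$, and since $A^d =_{01} A^m$ the graphs $A^{d/c}$ and $A^{m/c}$ are isomorphic, so the embedding $A^c \subseteq A^{d/c}$ translates into an embedding $A^c \subseteq A^{m/c}$, i.e.\ $A^c \le_{01} A^m$.

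Next I would mimic the construction from the proof of Lemma~\ref{monicrealized}. Let $H$ be the partial $L$-graph on vertices $v_0 \in V_0$, $v_1 \in V_1$, and $v^m_i \in V_i$ for each $i \in I^m - \{0,1\}$, with every edge coloured as in the corresponding edge of $A^m$, except that the edge $v_0 v_1$ is left undetermined. Since $A^m$ is minimally omitted, both $H - \{v_0\}$ and $H - \{v_1\}$ are proper subgraphs of $A^m$ and hence realized in $G$. By homogeneity of $G$, these two realizations may be chosen to agree on the common substructure $\{v^m_i : i \in I^m - \{0,1\}\}$, producing a realization of $H$ in $G$ in which the edge $v_0 v_1$ receives some colour $c \in C_{01}$; the resulting substructure is an isomorphic copy of $A^{m/c}$.

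The contradiction is then immediate: $A^c \le_{01} A^m$ means precisely that $A^c$ embeds into $A^{m/c}$, hence $A^c$ is realized in $G$, contradicting $A^c \in O(G)$. The step that needs the most care is the first one, namely that $t_{\cal A} = 1$ up to $=_{01}$ really forces $A^c \le_{01} A^m$ for every $c \in C_{01}$ (including for $c = m$, which is trivial). One should also note that $|I^m| \ge 3$ by Remark~\ref{A>2}, so that $H$ has at least one vertex outside $V_0 \cup V_1$ and the homogeneity argument has a nontrivial common part on which to act.
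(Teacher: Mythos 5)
Your proof is correct and follows essentially the same route as the paper's: suppose $t_{\cal A}=1$, deduce that all members of ${\cal A}$ are off-$E_{01}$-subgraphs of a single maximal monic, realize that monic minus each of its $V_0$ and $V_1$ vertices, amalgamate over their common part, and observe that whatever colour the new $E_{01}$-edge receives realizes some member of ${\cal A}$. The one point you spell out in more detail than the paper --- that $t_{\cal A}=1$ (maximal elements unique up to $=_{01}$) really does give $A^c \le_{01} A^m$ for every $c$ --- is a correct and worthwhile elaboration of a step the paper asserts without comment.
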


\begin{proof}
Otherwise, suppose that ${\cal A} \in \Lambda$ with $t_{\cal A} = 1$, and we aim for a contradiction. Then ${\cal A} \in \Lambda$ is a 
$C_{01}$-cover set with a unique (up to off-$E_{01}$-isomorphism) maximal element $A^\alpha$, so for each $c \in C_{01}$, $A^c \le_{01} A^\alpha$. 

The graphs $A^\alpha - \{v^\alpha_0\}, ~A^\alpha - \{v^\alpha_1\}, ~A^\alpha - \{v^\alpha_0, v^\alpha_1 \}$ are all realized in $G$, since they are 
all proper subgraphs of $A^\alpha$ (which is minimally omitted). So we can amalgamate $A^\alpha - \{v^\alpha_0\}$ and 
$A^\alpha - \{v^\alpha_1\}$ over $A^\alpha - \{v^\alpha_0, v^\alpha_1 \}$ to get a graph realized in $G$; call this amalgam $H$. The only new edge of 
$H$ is $v^\alpha_0 v^\alpha_1 \in E_{01}$, and we let $F(v^\alpha_0 v^\alpha_1) = \beta \in C_{01}$. But now $A^\beta \le_{01} H$, since 
$A^\beta \le_{01} A^\alpha$ and $H =_{01} A^\alpha$. Since the $E_{01}$-edge of $H$ is $\beta$-coloured, in fact $A^\beta \subseteq H$, so we have 
realized $A^\beta$ in $G$, contradicting the fact that it is minimally omitted. 
\end{proof}

We introduce a new order on cover sets in $\Lambda$, using the corresponding  $\le_{01}$ orders and maximal elements. For distinct 
${\cal A}, {\cal B} \in \Lambda$, we write ${\cal A} \preceq {\cal B}$ if each maximal element of $({\cal A}, \le_{01})$ is an off-$E_{01}$-subgraph 
of a maximal element of $({\cal B}, \le_{01})$. Note that the order $\le$ (determined by the sets of codes based on the cover sets) extends the order $\preceq$. 

\begin{lem} \label{orders}
Let ${\cal A}, {\cal B}$ be $C_{01}$-cover sets in $\Lambda $.
If ${\cal A} \preceq {\cal B}$, then ${\cal A} \le {\cal B}$.
\end{lem}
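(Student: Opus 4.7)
The plan is to show the inclusion $\Omega_{01}({\cal A}) \subseteq \Omega_{01}({\cal B})$ directly, by following an arbitrary code $\Omega \in \Omega_{01}({\cal A})$ through the chain of off-$E_{01}$-subgraph relations guaranteed by the hypothesis.

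First I would unpack the definition. Fix $\Omega = (0,1,k,l; c_{0k}, c_{0l}, c_{1k}, c_{1l}) \in \Omega_{01}({\cal A})$. By definition there exist $A^c, A^d \in {\cal A}$ such that $A^c$ is defined on $V_0, V_1, V_k$ with $F_{A^c}(E_{0k})=c_{0k}$, $F_{A^c}(E_{1k})=c_{1k}$, and $A^d$ is defined on $V_0, V_1, V_l$ with $F_{A^d}(E_{0l})=c_{0l}$, $F_{A^d}(E_{1l})=c_{1l}$. The task is to exhibit two members of ${\cal B}$ playing the same roles.

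Next I would record two trivial but essential observations. The relation $\le_{01}$ is transitive: if $X \le_{01} Y \le_{01} Z$ then the set of parts on which $X$ is defined is contained in that of $Y$, hence of $Z$, and the non-$E_{01}$ edges of $X$ agree with those of $Y$, hence with those of $Z$. Moreover, because each cover set in $\Lambda$ is finite, every element of $({\cal A}, \le_{01})$ lies below some maximal element. Consequently there are maximal elements $M^c, M^d \in {\cal A}$ with $A^c \le_{01} M^c$ and $A^d \le_{01} M^d$.

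Now I would invoke ${\cal A} \preceq {\cal B}$: there exist maximal elements $B^e, B^f \in {\cal B}$ with $M^c \le_{01} B^e$ and $M^d \le_{01} B^f$. Transitivity gives $A^c \le_{01} B^e$ and $A^d \le_{01} B^f$. Crucially, $A^c \le_{01} B^e$ means that $B^e$ is defined on every part on which $A^c$ is, and their non-$E_{01}$ edge-colours coincide on shared bipartite restrictions; thus $B^e$ is defined on $V_k$ with $F_{B^e}(E_{0k})=c_{0k}$ and $F_{B^e}(E_{1k})=c_{1k}$, and similarly $B^f$ is defined on $V_l$ with $F_{B^f}(E_{0l})=c_{0l}$ and $F_{B^f}(E_{1l})=c_{1l}$. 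Hence $B^e, B^f$ witness $\Omega \in \Omega_{01}({\cal B})$, which is the required inclusion.

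There is no real obstacle here: the statement reduces to transitivity of $\le_{01}$ together with the observation that the off-$E_{01}$-subgraph relation preserves exactly the data (parts and non-$E_{01}$ edge-colours) that a code records. The only thing one needs to be a little careful about is to cite the finiteness of ${\cal A}$ (so that maximal elements exist) and the fact that the $E_{01}$-colour of the elements of ${\cal B}$ is irrelevant to whether a code lies in $\Omega_{01}({\cal B})$.
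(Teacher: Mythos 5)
Your proof is correct and takes essentially the same approach as the paper: lift the witnessing pair $A^c, A^d$ to maximal elements of $({\cal A},\le_{01})$, push those through $\preceq$ to maximal elements of $({\cal B},\le_{01})$, and observe that $\le_{01}$ preserves the parts and non-$E_{01}$ edge-colours that a code records. The explicit appeal to transitivity and finiteness is left implicit in the paper but is exactly the content being used there.
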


\begin{proof}
Suppose ${\cal A} \preceq {\cal B}$, and consider $\Omega = (0,1,k,l; c_{0k}, c_{0l}, c_{1k}, c_{1l}) \in \Omega_{01}({\cal A})$. Say $\Omega$ is based on the monics $A^c, A^d \in {\cal A}$. That is, the $E_{0k}, E_{1k}$-edges in $A^c$ have colours $c_{0k}, c_{1k}$ respectively, and the $E_{0l}, E_{1l}$-edges in $A^d$ have colours $c_{0l}, c_{1l}$ respectively. In fact there are maximal elements $A^\gamma, A^\delta$ of $({\cal A}, \le_{01})$ which $\Omega$ is based on (for instance, say $A^\gamma, A^\delta$ are maximal elements with $A^c \le_{01} A^\gamma, ~A^d \le_{01} A^\delta$). 
Then since ${\cal A} \preceq {\cal B}$, there are maximal elements $B^\alpha, B^\beta$ of $({\cal B}, \le_{01})$ with $A^\gamma \le_{01} B^\alpha, ~A^\delta \le_{01} B^\beta$. Now the $E_{0k}, E_{1k}$-edges in $B^\alpha$ have the same colours as the corresponding edges in $A^\gamma$ (that is, colours $c_{0k}, c_{1k}$ respectively); and the $E_{0l}, E_{1l}$-edges in $B^\beta$ have the same colours as the corresponding edges in $A^\delta$ (that is, colours $c_{0l}, c_{1l}$ respectively). So $\Omega \in \Omega_{01}({\cal B})$, and hence $\Omega_{01}({\cal A}) \subseteq \Omega_{01}({\cal B})$.
\end{proof}

However we may note that the orders are different, since there can exist cover sets ${\cal A}, {\cal B} \in \Lambda$ with $\Omega_{01}({\cal A}) = \Omega_{01}({\cal B})$, but ${\cal A} \npreceq {\cal B}$ and ${\cal B} \npreceq {\cal A}$.
Also observe that there can exist ${\cal A}, {\cal B} \in \Lambda$ with ${\cal A} \prec {\cal B}$ (that is, ${\cal A} \preceq {\cal B}$ but ${\cal B} \npreceq {\cal A}$, so there is some maximal monic in ${\cal B}$ which is not an off-$E_{01}$-subgraph of any monic in ${\cal A}$) and $\Omega_{01}({\cal A}) = \Omega_{01}({\cal B})$, so the strict relations are not necessarily preserved in the extension from $\preceq$ to $\le$.

\subsubsection{`Minimal' cover sets: good cover sets}

We now formally describe good cover sets, and show how to find a good cover set based on any cover set. Recall the definition, which can now be written as follows: a cover set ${\cal A} = \{ A^c : c \in C_{01} \}$ in $\Lambda$ is \emph{good} if there is some colour $\alpha \in C_{01}$ such that for all $c \in C_{01} - \{ \alpha \}$, either $A^c \le_{01} A^\alpha$, or $A^{c/\alpha}$ is realized in $G$. In this case we call $\alpha$ a \emph{key colour}, and $A^\alpha$ a \emph{key monic}.

\begin{rem} \label{key}
If ${\cal A} \in \Lambda$ is good and $A^\alpha$ is a key monic, then $A^\alpha$ is a maximal element of ${\cal A}$. Otherwise suppose that there is some $\beta \in C_{01}$ such that $A^\alpha <_{01} A^\beta$; then certainly $A^\beta \not\le_{01} A^\alpha$, and $A^\alpha \subset A^{\beta/\alpha}$ so $A^{\beta/\alpha}$ is not realized in $G$; so $A^\alpha$ cannot be a key monic.

By contrast, note that if ${\cal A} \in \Lambda$ is not good, and $A^\alpha$ is a maximal element of ${\cal A}$, then there must be some $\beta \in C_{01}$ with $A^\beta \not\le_{01} A^\alpha$ such that $A^{\beta/\alpha}$ is not realized in $G$, since $\alpha$ is not a key colour. 
Supposing that $A^{\beta/\alpha}$ is not realized in $G$, some monic subgraph of it is minimally omitted. Note that all subgraphs not defined on both $V_0$ and $V_1$ are realized in $G$ since these are proper subgraphs of $A^{\beta}$ which is minimally omitted. So there is some minimally omitted monic subgraph of $A^{\beta/\alpha}$ which is defined on $V_0$ and $V_1$; that is, there is a monic $D^{\alpha} \subseteq A^{\beta/\alpha}$ with $\alpha$-coloured $E_{01}$-edge with $D^{\alpha} \in O(G)$.
This observation plays an important role in the following.
\end{rem}

We aim towards proving that if ${\cal A}$ is a $C_{01}$-cover set in $\Lambda$ (that is, if $C_{01}$-cover sets exist in $O(G)$), then there is a good 
$C_{01}$-cover set ${\cal B} \in \Lambda$ with ${\cal B} \preceq {\cal A}$ (so in particular, by Lemma~\ref{orders}, ${\cal B}$ is based on 
${\cal A}$). We write an algorithm which, given a homogeneous $m$-generic graph $G$ and a $C_{01}$-cover set ${\cal B}_0 \in \Lambda$ in $O(G)$, 
either finds a good $C_{01}$-cover set ${\cal B} \in \Lambda$ with ${\cal B} \preceq {\cal B}_0$, or finds a $C_{01}$-cover set ${\cal D} \in \Lambda$ 
with ${\cal D} \prec {\cal B}_0$ and with one fewer maximal element (up to off-$E_{01}$-isomorphism) than ${\cal B}_0$ (that is, $t_{\cal D} = t_{{\cal B}_0} - 1$). 

We fully describe, formally define, and verify, this \emph{good cover set search algorithm} (or GCSSA) in the next subsection (\ref{sectionGCSSA}), but for now let us just 
briefly explain what the algorithm does, and then show how it allows us to complete the proof of the non-complication theorem (Theorem~\ref{mgen}). The input for the algorithm is 
any cover set ${\cal B}_0 \in \Lambda$. At each step $i$, we consider a particular maximal member $B^{\gamma_i}$ of the cover set 
${\cal B}_i \in \Lambda$. If $B^{\gamma_i}$ is a key monic, then ${\cal B}_i$ is good, and we stop. Otherwise, by Remark~\ref{key}, we find another 
minimally omitted monic $D^{\gamma_i} \in O(G)$ such that $D^{\gamma_i} \subseteq B^{\delta_i / \gamma_i}$ for some suitable $\delta_i \in C_{01}$, 
$\delta_i \ne \gamma_i$, with $B^{\delta_i} \in {\cal B}_i$, and replace $B^{\gamma_i}$ by $D^{\gamma_i}$ in the next step to obtain 
${\cal B}_{i+1} \in \Lambda$ (so note that ${\cal B}_{i+1} \preceq {\cal B}_i$). 
By the careful choice of the monic $B^{\gamma_i}$ at each step, if the algorithm does not find a good cover set, then we shall show 
that when it stops (after a finite number of steps) the cover set ${\cal D}$ at that step has one fewer maximal element (up to 
off-$E_{01}$-isomorphism) than ${\cal B}_0$.

Firstly, we use the GCSSA to prove that if ${\cal A}$ is a $C_{01}$-cover set in $\Lambda$, then there is a good $C_{01}$-cover set ${\cal B}$ in $\Lambda$ with ${\cal B} \preceq {\cal A}$ (and so by Lemma~\ref{orders}, ${\cal B}$ is based on ${\cal A}$).

\begin{lem} \label{goodcoverset}
If $G$ is a homogeneous $m$-generic graph which has a $C_{01}$-cover set ${\cal A}$ in $O(G)$, then there is a good $C_{01}$-cover set in $\Lambda$ 
based on ${\cal A}$. 
\end{lem}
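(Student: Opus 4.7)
The plan is to iterate the good cover set search algorithm (GCSSA, described in Subsection~\ref{sectionGCSSA}), using Lemma~\ref{mincoverset} to force termination at a good cover set. The lemma itself, once GCSSA is in hand, amounts to a short meta-argument; the substantive work is hidden inside the algorithm.

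First I would reduce to the case ${\cal A} \in \Lambda$. If $|{\cal A}| > |C_{01}|$, thin ${\cal A}$ by choosing, for each $c \in C_{01}$, a single member of ${\cal A}$ with $c$-coloured $E_{01}$-edge; the resulting subfamily ${\cal A}_0 \in \Lambda$ automatically satisfies $\Omega_{01}({\cal A}_0) \subseteq \Omega_{01}({\cal A})$, i.e.\ ${\cal A}_0 \le {\cal A}$, since any code built from monics of ${\cal A}_0$ is a fortiori built from monics of ${\cal A}$. By transitivity of $\le$, it is enough to produce a good cover set in $\Lambda$ based on ${\cal A}_0$.

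Next I would apply the GCSSA iteratively, setting ${\cal B}_0 := {\cal A}_0$ and, at each step $i \geq 0$, feeding ${\cal B}_i$ into the algorithm. By the algorithm's specification, either the algorithm returns a good cover set ${\cal B} \in \Lambda$ with ${\cal B} \preceq {\cal B}_i$ (and we stop), or it returns ${\cal B}_{i+1} \in \Lambda$ with ${\cal B}_{i+1} \prec {\cal B}_i$ and $t_{{\cal B}_{i+1}} = t_{{\cal B}_i} - 1$ (and we continue). If the algorithm ever succeeds, transitivity of $\preceq$ together with Lemma~\ref{orders} gives ${\cal B} \preceq {\cal B}_i \preceq \cdots \preceq {\cal B}_0$, hence ${\cal B} \le {\cal A}_0 \le {\cal A}$, so ${\cal B}$ is a good cover set in $\Lambda$ based on ${\cal A}$, as required.

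Termination is the easy part: the sequence $t_{{\cal B}_0} > t_{{\cal B}_1} > \cdots$ is a strictly decreasing sequence of positive integers, but every ${\cal B}_i \in \Lambda$ satisfies $t_{{\cal B}_i} \ge 2$ by Lemma~\ref{mincoverset}. Hence the algorithm cannot continue for more than $t_{{\cal B}_0} - 2$ iterations before it must return a good cover set. The main obstacle is therefore not in the present lemma but in the GCSSA itself: one must, at each step, exploit Remark~\ref{key} to locate a non-key maximal $B^{\gamma_i}$ of ${\cal B}_i$ and a minimally omitted monic $D^{\gamma_i} \subseteq B^{\delta_i/\gamma_i}$ with which to replace it, and then verify that the updated family stays in $\Lambda$, lies $\preceq$-below ${\cal B}_i$, and either certifies goodness or removes exactly one off-$E_{01}$-isomorphism class of maximal elements. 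These verifications are what I would defer to Subsection~\ref{sectionGCSSA}.
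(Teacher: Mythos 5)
Your proposal is correct and matches the paper's argument in substance: both reduce to ${\cal A} \in \Lambda$, invoke the GCSSA as a black box together with Lemma~\ref{mincoverset}'s bound $t_{\cal B} \ge 2$, and defer all substantive verification to Subsection~\ref{sectionGCSSA}. The only cosmetic difference is that you iterate the GCSSA until it must terminate at a good cover set, whereas the paper applies it once to an ${\cal A}$-minimal ${\cal B}_0 \in \Lambda_{\cal A}$ and derives a contradiction to that minimality; these are two presentations of the same extremal/descent argument.
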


\begin{proof}
We may assume that ${\cal A} \in \Lambda$. Otherwise consider some subset of the cover set ${\cal A}' \subset {\cal A}$ such that ${\cal A}' \in \Lambda$. Any good $C_{01}$-cover set based on ${\cal A}'$ is also based on ${\cal A}$.

Suppose that there are no good cover sets in $\Lambda$ based on ${\cal A} \in \Lambda$, and aim for a contradiction.
Consider the set $\Lambda_{\cal A}$ of all cover sets in $\Lambda$ based on ${\cal A}$. Then by the assumption, there are no good cover sets in $\Lambda_{\cal A}$.
Let ${\cal B}_0 \in \Lambda_{\cal A}$ be an ${\cal A}$-minimal cover set (that is, it has the least possible number of maximal elements (up to off-$E_{01}$-isomorphism) for a cover set based on $\cal A$), and apply the GCSSA. At each step, the cover set considered is a member of $\Lambda_{\cal A}$ (since each is based on the previous one), and it is not good. So the algorithm stops by finding a new cover set ${\cal D} \in \Lambda_{\cal A}$ with ${\cal D} \preceq {\cal B}_0$ such that $t_{\cal D} = t_{{\cal B}_0} - 1$. But now ${\cal D}$ has strictly fewer maximal elements (up to off-$E_{01}$-isomorphism) than ${\cal B}_0$, which clearly contradicts the fact that ${\cal B}_0$ was ${\cal A}$-minimal. 
\end{proof}

Furthermore, we use the GCSSA to show that for any $C_{01}$-cover set ${\cal A}$, we may find such a good $C_{01}$-cover set ${\cal B}$ based on ${\cal A}$ which is a \emph{star} $C_{01}$-cover set. That is, ${\cal B}$ either has a key monic $B^\alpha$ defined on at least 4 parts, or if the key monic $B^\alpha$ is defined on exactly 3 parts (say $B^\alpha$ is a $012$-monic) then there are no other monics in ${\cal B}$ defined on all of the remaining parts (in this instance, $V_3, \ldots, V_{m-1}$; that is, there are no $0134\ldots(m-1)$-monics nor $01\ldots(m-1)$-monics in ${\cal B}$).

\begin{lem} \label{starcoverset}
If $G$ is a homogeneous $m$-generic graph which has a $C_{01}$-cover set ${\cal A}$ in $O(G)$, then there is a star $C_{01}$-cover set in $\Lambda$ based on ${\cal A}$. 
\end{lem}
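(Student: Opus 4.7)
The plan is to start from the good cover set ${\cal B}$ based on ${\cal A}$ provided by Lemma~\ref{goodcoverset} and then refine it to a star by a second round of GCSSA-style surgery. Among all good cover sets based on ${\cal A}$ I would pick ${\cal B}$ lexicographically: first maximise $|I^\alpha|$, the number of parts on which the key monic $B^\alpha$ is defined, and then minimise the number of other monics in ${\cal B}$ that are defined on all of the parts $\{0,\dots,m-1\}\setminus I^\alpha$. By the definition of a star cover set, such a ${\cal B}$ is automatically a star unless $|I^\alpha|=3$ (without loss of generality $I^\alpha=\{0,1,2\}$) and some $B^\gamma\in{\cal B}$ is defined on all of $V_3,\dots,V_{m-1}$; I would then aim for a contradiction.

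In this bad case, goodness of ${\cal B}$ at $\alpha$ together with $B^\gamma\not\le_{01}B^\alpha$ gives that $B^{\gamma/\alpha}$ is realised in $G$. The idea is to replace $B^\gamma$ in ${\cal B}$ by a different minimally omitted monic $D^\gamma\in O(G)$ with $\gamma$-coloured $E_{01}$-edge, chosen so that $D^\gamma$ drops at least one of the parts $V_3,\dots,V_{m-1}$, and so that the resulting cover set ${\cal B}':=({\cal B}\setminus\{B^\gamma\})\cup\{D^\gamma\}$ is still good with $\alpha$ as key colour and $B^\alpha$ as key monic. The existence of $D^\gamma$ would be obtained by running an inner GCSSA: starting from a proper off-$E_{01}$-subgraph of $B^\gamma$ (which is realised in $G$ since $B^\gamma$ is minimally omitted) and amalgamating it with the realised $B^{\gamma/\alpha}$, one obtains a finite configuration whose every colouring of the $V_0V_1$-edge produces some forbidden monic; since the configuration already omits $B^\gamma$, the minimally omitted monic extracted must differ from $B^\gamma$ and, by a suitable choice of starting subgraph, can be made to live on strictly fewer of the parts $V_3,\dots,V_{m-1}$. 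Goodness of ${\cal B}'$ at $\alpha$ is then preserved automatically: either $D^\gamma\le_{01}B^\alpha$ (if $D^\gamma$ is pulled down into $I^\alpha$), or $D^{\gamma/\alpha}$ is realised by construction. This single replacement strictly decreases the secondary complexity measure, giving the contradiction.

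The hard part will be carrying out this inner GCSSA so that $D^\gamma$ can be guaranteed to both drop a part and carry the correct $E_{01}$-colour. The naive amalgamation tends to reproduce $B^\gamma$ as the sole obstruction, and the resolution requires tracking which sub-parts of $B^\gamma$ are compatible with the realised copy of $B^{\gamma/\alpha}$, analogous to the careful choice of monic $B^{\gamma_i}$ at each step of the original GCSSA in Lemma~\ref{goodcoverset}. Finiteness of the language and of the number of parts ensures the whole process terminates, yielding a star cover set based on ${\cal A}$.
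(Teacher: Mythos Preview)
Your overall extremal setup (pick a good ${\cal B}$ optimising some complexity measure, then contradict minimality) is reasonable, but the mechanism you propose for producing the replacement $D^\gamma$ does not work as stated. The amalgamation you describe --- a proper off-$E_{01}$-subgraph of $B^\gamma$ together with the realised $B^{\gamma/\alpha}$ --- does not yield a configuration in which \emph{every} colouring of an undetermined $E_{01}$-edge is forbidden; these two pieces carry $E_{01}$-edges of colours $\gamma$ and $\alpha$ respectively, and there is no obvious way to amalgamate them so as to leave a single $E_{01}$-edge free. More fundamentally, the tool you are reaching for (Remark~\ref{key}) produces a new minimally omitted monic with $\alpha$-coloured $E_{01}$-edge precisely when $\alpha$ is \emph{not} a key colour; here $\alpha$ \emph{is} the key colour and you want a new monic with $\gamma$-coloured edge, so you are working against the grain of the available machinery. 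Nothing you have written forces such a $D^\gamma$ to exist, let alone to drop a part from $\{3,\dots,m-1\}$ and to keep $D^{\gamma/\alpha}$ realised.

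The paper's argument avoids this by abandoning the attempt to keep $\alpha$ as key colour. Instead it simply re-runs the GCSSA on ${\cal B}$, but now with the large monic $B^\beta$ (taken maximal) as the initial test monic $\gamma_0$. The existing GCSSA machinery then gives a clean dichotomy: either it halts with a good cover set whose key monic is $=_{01} B^\beta$, hence defined on at least $m-1\ge 4$ parts and therefore star; or it first replaces every monic $=_{01} B^\beta$, yielding a cover set based on ${\cal A}$ with strictly fewer members of that large shape. Iterating, one eventually reaches a cover set all of whose members are defined on at most $m-2$ parts, and any good cover set based on that is automatically star. The point is to feed the offending large monic into the algorithm as the \emph{test} monic, not to try to excise it while preserving the small key.
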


The proof of this lemma involves knowing more about how the algorithm works, so we leave the proof until later (see subsection \ref{sectionGCSSA}).
However using this result, we now prove Theorem~\ref{mgen}, by showing that if there exist $C_{01}$-cover sets which do not have corresponding 
$C_{01}$-omission sets based on them, then we can always find a minimal such $C_{01}$-cover set which is a star cover set; 
but we also show that if such a star cover set exists which does not have any corresponding 
$C_{01}$-omission sets based on it, then it is possible to construct a graph realized in $G$ which must realize some member of the star cover set, giving the required contradiction.

\subsubsection{The proof of the non-complication theorem}

\begin{proof}[Proof of Theorem~\ref{mgen}]
Assuming for a contradiction that there exist homogeneous $n$-generic graphs with a cover set (in the set of all minimally omitted monics), but no corresponding omission set based on that cover set, we may consider a minimal case.
So suppose $G$ is a homogeneous $m$-generic graph with $m \ge 1$ as small as possible such that $O(G)$ contains a cover set (which we may assume covers $C_{01}$), but no corresponding omission set based on that cover set; and aim for a contradiction. First observe that such a cover set must be defined on all $m$ parts of $G$, and by Lemma~\ref{4partite}, certainly $m \ge 5$.

Now, by the preceding results, we may assume that ${\cal A}$ is a particular kind of `minimal' such $C_{01}$-cover set. That is,
${\cal A} \in \Lambda$ (so $|{\cal A}| = |C_{01}|$) by Lemma~\ref{lambda};
${\cal A}$ is minimal with respect to the `based on' ordering $<$ (that is, there is no $C_{01}$-cover set ${\cal B}$ in $O(G)$ with $\Omega_{01}({\cal B}) \subset \Omega_{01}({\cal A})$);
and ${\cal A}$ is a star cover set by Lemmas~\ref{goodcoverset} and \ref{starcoverset}. 
Also, if possible, find such a cover set ${\cal A}$ which is $(i,j)$-free for some distinct $i,j \in \{2, \ldots, m-1\}$.
However, in the rest of the proof, we show that the assumption that there are no corresponding $C_{01}$-omission sets in $O(G)$ based on this minimal star cover set ${\cal A}$ leads to a contradiction, by showing how to construct a graph realized in $G$ which must realize some member of the star cover set.

So let ${\cal A} \in \Lambda$ be a star $C_{01}$-cover set. Without loss of generality, we may assume that the key colour is $0$, and so ${\cal A}$ has key monic $A^0$. 
We show how to construct certain graphs which are closely related to $A^0$, but which are realized in $G$. 

To construct the graphs, we `work over' a fixed part on which $A^0$ is defined (other than $V_0, V_1$). 
If $A^0$ is a triangle, then we may assume that $I^0 = \{0,1,2\}$, and so we work over $V_2$.
Otherwise, if $A^0$ is defined on at least 4 parts, then we consider two cases. If ${\cal A}$ is $(i,j)$-free for some $i,j \in \{2, \ldots, m-1\}$, then note that $|I^0 - \{0,1,i,j\}| \ge 1$, so we may assume that $2 \in I^0 - \{0,1,i,j\}$ (that is, $i \ne 2 \ne j$ and $2 \in I^0$); so again we may work over $V_2$.
Finally, if ${\cal A}$ is not $(i,j)$-free for any choice of $i,j \in \{2, \ldots, m-1\}$ (and so by the choice of ${\cal A}$, this is the case for all such good cover sets without corresponding omission sets), then we just assume that $2 \in I^0$ and work over $V_2$ (but end up finding a contradiction to this choice of ${\cal A}$). 
So in all cases we work over $V_2$, having carefully chosen this part.

\begin{claim} 
There are colours $\alpha \in C_{02} - \{0'\}$, $\beta \in C_{12} - \{0'\}$ such that the three monics $A^0(02;\alpha)$, $A^0(12;\beta)$, and $T^{0 \alpha \beta}$ are all realized in $G$: 
where $A^0(02;\alpha)$ is a copy of $A^0$ except with $\alpha$-coloured $E_{02}$-edge; $A^0(12;\beta)$ is a copy of $A^0$ except with $\beta$-coloured $E_{12}$-edge; and $T^{0 \alpha \beta}$ is a triangle (a $012$-monic) with $0$-coloured $E_{01}$-edge, $\alpha$-coloured $E_{02}$-edge, and $\beta$-coloured $E_{12}$-edge. 
\end{claim}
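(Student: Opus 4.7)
My plan is to establish each of the three realizations in turn: the first two by the standard undetermined-edge amalgamation argument (as in the proof of Lemma~\ref{monicrealized}), and the third by a more delicate construction that uses both.

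For $A^0(02;\alpha)$, the graphs $A^0 - \{v^0_0\}$ and $A^0 - \{v^0_2\}$ are realized in $G$ as proper subgraphs of the minimally omitted monic $A^0$. Homogeneity amalgamates them over $A^0 - \{v^0_0, v^0_2\}$ to produce a realized graph which agrees with $A^0$ on every edge except $E_{02}$, which now carries some colour $\alpha \in C_{02}$. If $\alpha = 0'$ the amalgam is $A^0$ itself, contradicting $A^0 \in O(G)$, so $\alpha \ne 0'$ and $A^0(02;\alpha)$ is realized. A symmetric argument on the $E_{12}$-edge yields $\beta \in C_{12} \setminus \{0'\}$ with $A^0(12;\beta)$ realized.

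For the triangle, I plan to adjoin a fresh vertex $u_1 \in V_1$ and consider the graph $K$ on $V(A^0) \cup \{u_1\}$, coloured so that the restriction to $V(A^0)$ is $A^0(02;\alpha)$, $F(u_1, v^0_2) = \beta$, $F(u_1, v^0_i) = 0'$ for $i \in I^0 \setminus \{0,1,2\}$, and the edge $F(u_1, v^0_0)$ is left to be filled in by amalgamation. Corollary~\ref{monic} gives that both $K - \{u_1\} = A^0(02;\alpha)$ and $K - \{v^0_0\}$ (whose monic subgraphs are subgraphs of $A^0 - \{v^0_0\}$ or of $A^0(12;\beta) - \{v^0_0\}$, all realized) are realized in $G$. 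Amalgamating them by homogeneity produces $K$ with the $E_{01}$-edge $u_1 v^0_0$ taking some colour $c \in C_{01}$, and the induced triangle $\{v^0_0, u_1, v^0_2\}$ in $K$ realizes $T^{c\alpha\beta}$.

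The main obstacle is to force $c = 0$. If $c \ne 0$, the large monic on $V(A^0) \setminus \{v^0_1\} \cup \{u_1\}$ (with $E_{01}=c$, $E_{02}=\alpha$, $E_{12}=\beta$, and all other edges $0'$-coloured) must itself be realized in $G$ for the amalgamation to succeed. I expect to rule this out using the goodness of the star cover set $\cal A$: for each $c \ne 0$, either $A^c \le_{01} A^0$ (in which case, provided $2 \notin I^c$, $A^c$ embeds as a subgraph of this large monic, contradicting $A^c \in O(G)$) or $A^{c/0}$ is already realized, and the star and $(i,j)$-free conditions on $\cal A$ should allow the remaining cases to be exhausted, possibly by iterating the construction with the new vertex adjoined in a different part or by adjusting $\alpha, \beta$ within the flexibility afforded by the first two parts. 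The bulk of the work lies in this final case analysis, which crucially exploits the specific "minimal star" structure of $\cal A$ set up earlier.
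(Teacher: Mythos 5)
Your first step (realizing $A^0(02;\alpha)$ for some $\alpha \ne 0'$ by amalgamating $A^0 - \{v^0_0\}$ and $A^0 - \{v^0_2\}$) matches the paper. But the remainder has a genuine gap, and it is not a gap that can be closed by ``exhausting cases'' as you suggest.

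The problem is structural: by adjoining a single fresh vertex $u_1 \in V_1$ to $A^0(02;\alpha)$ and amalgamating $K - \{u_1\}$ with $K - \{v^0_0\}$, you introduce a new $E_{01}$-edge $u_1 v^0_0$ whose colour $c$ you do not control. Your sketched fix covers only a sliver of the cases. If $c \ne 0$ and $A^c \le_{01} A^0$ with $2 \notin I^c$, then indeed $A^c$ embeds in your large monic and you get a contradiction. But if $A^c \le_{01} A^0$ and $2 \in I^c$, then $A^c$ has $0'$-coloured $E_{02}$- and $E_{12}$-edges whereas your large monic has $\alpha$- and $\beta$-coloured ones, so $A^c$ does not embed and no contradiction arises; and if instead $A^{c/0}$ is realized (the other branch in the definition of a good cover set), that fact says nothing at all about whether your large monic with $E_{01}=c$, $E_{02}=\alpha$, $E_{12}=\beta$ is realized. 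Goodness of $\cal A$ constrains off-$E_{01}$-isomorphs of $A^c$, not graphs whose $E_{02}$, $E_{12}$-edges have been altered to $\alpha$, $\beta$. There is no apparent route to finish from here; ``adjusting $\alpha, \beta$'' or ``iterating with the new vertex in a different part'' is not an argument.

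The paper sidesteps the issue entirely by never creating an undetermined $E_{01}$-edge. Rather than adjoin a vertex in $V_1$, one duplicates the $V_0$-vertex: take $a_0, b_0 \in V_0$ both joined to $v_1$ by $0$-coloured $E_{01}$-edges (prescribed from the start), with $a_0$ carrying the $\alpha$-coloured $E_{02}$-edge and $b_0$ the $0'$-coloured one, and everything else $0'$-coloured. Build $D_1$ on $\{a_0,b_0\} \cup \{v_i : i \in I^0 - \{0,2\}\}$ and $D_2$ on $\{a_0,b_0\} \cup \{v_i : i \in I^0 - \{0,1\}\}$, check each is realized via Corollary~\ref{monic}, and amalgamate over their common part $\{a_0,b_0\} \cup \{v_i : i \in I^0 - \{0,1,2\}\}$. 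The unique new edge is $v_1 v_2 \in E_{12}$, of some colour $\beta$. Then $\langle b_0, v_1, v_2, \ldots\rangle$ is a copy of $A^0(12;\beta)$ (forcing $\beta \ne 0'$) and $\langle a_0, v_1, v_2\rangle$ is $T^{0\alpha\beta}$, both realized at once and for the same pair $(\alpha, \beta)$. The key idea you are missing is that $\beta$ must be produced by the construction, not chosen independently beforehand, and the duplicated $V_0$-vertex is what ties $\alpha$, $\beta$, and the triangle together in a single amalgamation with no rogue $E_{01}$-edge.
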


\begin{proof}[Proof of claim]
First note that since $A^0$ is minimally omitted, $A^0 - \{ v^0_0 \}$ and $A^0 - \{ v^0_2 \}$ are both realized in $G$. We may amalgamate these over 
$A^0 - \{ v^0_0, v^0_2 \}$ to find some monic $A^0(02;\alpha)$ with $\alpha \in C_{02}$ which is realized in $G$. Note that $\alpha \ne 0'$ since 
$A^0(02;0') = A^0$ which is minimally omitted, so $\alpha \in C_{02} - \{0'\}$.

Now consider the following two graphs: 
$D_1$ has vertices $a_0, b_0 \in V_0$, and $v_i \in V_i$ for each $i \in I^0 - \{0,2\}$, with $F(a_0 v_1) = F(b_0 v_1) = 0$ and all other edges $0'$-coloured; and $D_2$ has vertices $a_0, b_0 \in V_0$, and $v_i \in V_i$ for each $i \in I^0 - \{0,1\}$, with $F(a_0 v_2) = \alpha$ and all other edges $0'$-coloured (including $F(b_0 v_2) = 0'$).

The two maximal monic subgraphs $\langle a_0, v_1, v_3, \ldots, v_{|I^0|-1} \rangle$ and $\langle b_0, v_1, v_3, \ldots, v_{|I^0|-1} \rangle$ of $D_1$ 
are both copies of $A^0 - \{v^0_2\}$, and are realized since $A^0$ is minimally omitted. Thus by Corollary~\ref{monic}, $D_1$ is realized in $G$.

Meanwhile, $D_2$ also has two maximal monic subgraphs. Firstly $\langle a_0, v_2, \ldots, v_{|I^0|-1} \rangle \subset D_2$ is a copy of $A^0(02;\alpha) - \{v_1\}$, which is realized since $\alpha$ was chosen so that $A^0(02;\alpha)$ is realized. And secondly 
$\langle b_0, v_2, \ldots, v_{|I^0|-1} \rangle$ is a copy of $A^0 - \{v^0_1\}$, which is realized since $A^0$ is minimally omitted. Thus by 
Corollary~\ref{monic}, $D_2$ is realized in $G$.

We may now amalgamate $D_1$ and $D_2$ over $D_1 - \{v_1\} = D_2 - \{v_2\}$ to obtain a graph $H$ realized in $G$. The only new edge of $H$ is 
$v_1 v_2$, and we let $F_H(v_1 v_2) = \beta \in C_{12}$. Then $\langle b_0, v_1, \ldots, v_{|I^0|-1} \rangle \subset H$ is a copy of $A^0(12;\beta)$ 
realized in $G$, and since $A^0(12;0') = A^0$, which is minimally omitted, certainly $\beta \ne 0'$ so $\beta \in C_{12} - \{0'\}$.

We also note that $\langle a_0, v_1, v_2 \rangle \subset H$ is a copy of $T^{0 \alpha \beta}$. So now we have found 
$\alpha \in C_{02} - \{0'\}$, $\beta \in C_{12} - \{0'\}$ such that the monics $A^0(02;\alpha), ~A^0(12;\beta)$, and $T^{0 \alpha \beta}$ are all 
realized in $G$, as required.
\end{proof}

Now let $A^*$ be an $I^0$-graph with vertices $v_i \in V_i$ for $i \in \{0,1\}$, $u^0_2, w^0_2 \in V_2$, and $v^0_i$ for each $i \in I^0 - \{0,1,2\}$, such that $F(v_0, v_1) = 0$, $F(v_0, u^0_2) = \alpha$, $F(v_1, w^0_2) = \beta$ and all other edges are $0'$-coloured. 
The two maximal monic subgraphs of $A^*$ are $\langle v_0, v_1, u^0_2, v^0_3, \ldots, v^0_{|I^0|-1} \rangle$ which is a copy of $A^0(02;\alpha)$, and $\langle v_0, v_1, w^0_2, v^0_3, \ldots, v^0_{|I^0|-1} \rangle$ which is a copy of $A^0(12;\beta)$, which are both realized by the choice of $\alpha, \beta$, and so by Corollary~\ref{monic}, $A^*$ is realized in $G$.

Next let $t := t_{\cal A} = t({\cal A}) \le r$, and let ${\cal A}^M$ be a set of $t$ pairwise non-off-$E_{01}$-isomorphic maximal elements of ${\cal A}$, such that $A^0 \in {\cal A}^M$. 
Let $M := \{ c \in C_{01} : A^c \in {\cal A}^M \}$; without loss of generality we may assume that $M = \{ 0, 1, \ldots, t-1 \}$.
For each $c \in M - \{0\} = \{ 1, \ldots, t-1 \}$, observe that $A^{c/0}$ is realized in $G$, since ${\cal A}$ is a good cover set and $0$ is a key colour. 
We now wish to show that for each $c \in \{ 1, \ldots, t-1 \}$, there are colours $\delta^{0c}_{ij}  \in C_{ij}$ for each $i \in I^0 - \{0,1,2\}$, $j \in I^c - \{0,1\}$ with $i<j$; 
$\delta^{c0}_{ij} \in C_{ij}$ for each $i \in I^c - \{0,1\}$, $j \in I^0 - \{0,1,2\}$ with $i<j$;  
and $\gamma^{0c}_{2j} \in C_{2j}$ for each $j \in I^c - \{0,1,2\}$, 
such that we can realize the following graph $B^{c*}$. 

Let $I^{c*}:= I^c \cup I^0 \subseteq \{ 0,1,\ldots, m-1\}$, and let $B^{c*}$ be an $I^{c*}$-graph with vertices $v_0 \in V_0$, $v_1 \in V_1$, $u^0_2, w^0_2 \in V_2$, $v^0_i \in V_i$ for each $i \in I^0 - \{ 0,1,2 \}$, and $v^c_i \in V_i$ for each $i \in I^c - \{ 0,1 \}$. 
Edge colours agree with $A^{c/0}$ and $A^*$ where defined: 
that is, $\langle v_0, v_1, u^0_2, w^0_2, v^0_i : i \in I^0 - \{ 0,1,2 \} \rangle$ is a copy of $A^*$; and $\langle v_0, v_1, v^c_i : i \in I^c - \{ 0,1 \} \rangle$ is a copy of $A^{c/0}$. 
For the new edges not involving $u^0_2$ or $w^0_2$, let the colours be the following:
for each $i \in I^0 - \{0,1,2\}$, $j \in I^c - \{0,1\}$ with $i<j$, let $F(v^0_i v^c_j) = \delta^{0c}_{ij} \in C_{ij}$; and 
for each $i \in I^c - \{0,1\}$, $j \in I^0 - \{0,1,2\}$ with $i<j$, let $F(v^c_i v^0_j) = \delta^{c0}_{ij} \in C_{ij}$; note that these colours do not really play a role. 
Finally, for each $j \in I^c - \{0,1,2\}$, let $F(u^0_2 v^c_j) = F(w^0_2 v^c_j) = \gamma^{0c}_{2j}$.

\begin{claim} \label{claimBc*}
For each $c \in \{ 1, \ldots, t-1 \}$, there are colours $\delta^{0c}_{ij}  \in C_{ij}$ for each $i \in I^0 - \{0,1,2\}$, $j \in I^c - \{0,1\}$ with $i<j$; 
$\delta^{c0}_{ij} \in C_{ij}$ for each $i \in I^c - \{0,1\}$, $j \in I^0 - \{0,1,2\}$ with $i<j$;  
and $\gamma^{0c}_{2j} \in C_{2j}$ for each $j \in I^c - \{0,1,2\}$, 
such that $B^{c*}$ is realized in $G$. 
\end{claim}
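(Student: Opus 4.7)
The plan is to invoke Corollary~\ref{monic} and reduce realizing $B^{c*}$ in $G$ to realizing every monic subgraph of $B^{c*}$ for an appropriate choice of the colours $\gamma^{0c}_{2j}, \delta^{0c}_{ij}, \delta^{c0}_{ij}$. The key observation is that any monic subgraph contains at most one of $u^0_2, w^0_2$, so it is a sub-monic of either
$M_u := \langle v_0, v_1, u^0_2, v^0_i, v^c_j \rangle$ or
$M_w := \langle v_0, v_1, w^0_2, v^0_i, v^c_j \rangle$, with the two classes overlapping on sub-monics of $N := \langle v_0, v_1, v^0_i, v^c_j \rangle$. Hence it suffices to realize $M_u$ and $M_w$ in $G$ under the same $\gamma, \delta$.

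I would first realize $N$ in $G$ by homogeneity: both $A^0 - \{v^0_2\}$ (a proper subgraph of the minimally omitted $A^0$, hence realized) and $A^{c/0}$ (realized because ${\cal A}$ is a good cover set with key colour $0$) embed in $G$ and share $\langle v_0, v_1 \rangle$, and any common embedding of $\langle v_0, v_1 \rangle$ extends to an embedding of the amalgam $N$. This fixes the colours $\delta^{0c}_{ij}, \delta^{c0}_{ij}$. Next, amalgamating the realized $A^0(02;\alpha)$ (from the previous claim) with this realization of $N$ over $\langle v_0, v_1, v^0_i \rangle$ via homogeneity produces a realization of $M_u$ in $G$, and I would define $\gamma^{0c}_{2j} := F(u^0_2, v^c_j)$ from the resulting embedding.

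The main obstacle is then showing that $M_w$ is also realized in $G$ with these same $\gamma, \delta$. Applying Corollary~\ref{monic} to $M_w$ reduces this to verifying realization of its monic subgraphs: those avoiding $w^0_2$ are sub-monics of $N$ (already realized); those containing $w^0_2$ reduce by further applications of Corollary~\ref{monic} to realizing specific triangles $\langle v_0, w^0_2, v^c_j \rangle$ and $\langle v_1, w^0_2, v^c_j \rangle$ with $E_{2j}$-edge coloured $\gamma^{0c}_{2j}$, together with compatible larger sub-configurations. To establish realization of these triangles, I would argue by the minimality of ${\cal A}$: were every candidate colour $\gamma \in C_{2j}$ to produce some minimally omitted triangle among the four relevant $\{0,1,2,j\}$-triangle types, the resulting collection would constitute a $C_{2j}$-cover set in $O(G)$ inside the restriction to the four parts $V_0, V_1, V_2, V_j$. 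By Lemma~\ref{restriction} this restriction is homogeneous and $4$-generic, so Lemma~\ref{4partite} then yields a corresponding $C_{01}^{2j}$-omission set in $O(G)$, producing a triangle-based $C_{01}$-cover set strictly below ${\cal A}$ in the ``based on'' ordering, contradicting the minimality of ${\cal A}$. The existence of compatible $\gamma^{0c}_{2j}$ follows, and with them Corollary~\ref{monic} delivers realization of $M_w$, hence of $B^{c*}$.
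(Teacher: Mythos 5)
The approach departs substantially from the paper's, which builds $B^{c*}$ part-by-part via a delicate induction on initial segments of $I^{c*}$, deferring the choice of each $\gamma^{0c}_{2n}$ to a dedicated sub-claim (Claim~\ref{claimBc*n+1}) that constructs auxiliary graphs $D$, $D_2$, $D_n$ with \emph{two} vertices in each of $V_0$ and $V_1$, and derives a contradiction from the $(2,n)$-freeness and size of a putative cover set ${\cal D}$ against the minimal choice of $m$, the case $(\dag)$, and the star/$(i,j)$-free structure of ${\cal A}$. Your one-shot attempt skips all of this scaffolding, and the two key steps where you shortcut it both have real gaps.

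First, the reduction ``those containing $w^0_2$ reduce by further applications of Corollary~\ref{monic} to realizing specific triangles'' is not a valid use of that corollary. Corollary~\ref{monic} lets you pass from a non-monic graph to its monic subgraphs, but a monic subgraph of $M_w$ containing $w^0_2$ together with, say, several $v^0_i$ and $v^c_j$ is already monic, and may itself lie in $O(G)$; knowing its sub-triangles are realized tells you nothing. Since $|I^0|$, $|I^c|$ can be large, you cannot avoid arguing about realization of monics on arbitrarily many parts, which is exactly why the paper works inductively one part at a time.

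Second, the minimality contradiction does not close. The would-be $C_{2j}$-cover set you assemble consists of $02j$- and $12j$-triangles whose $E_{02}$-, $E_{12}$-, $E_{0j}$-, $E_{1j}$-edges carry the colours $\alpha$, $0'$, $\beta$, $c'$ coming from $A^*$ and $A^{c/0}$, not the colours carried by the members of ${\cal A}$. The $C_{01}^{2j}$-omission set extracted via Lemma~\ref{4partite} and Lemma~\ref{corros} therefore has a code whose $E_{02}$-, $E_{12}$-slots are $\alpha$ or $\beta$ rather than $0'$, and there is no reason for that code to lie in $\Omega_{01}({\cal A})$. So the omission set you produce is not, in general, based on ${\cal A}$, and ``a $C_{01}$-cover set strictly below ${\cal A}$'' is not established: recall ${\cal A}$ was chosen minimal among cover sets admitting \emph{no} omission set based on them, and your new cover set, being itself an omission set, is not of that type. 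You have a new omission set floating free, not a contradiction. There is also a circularity in defining $\gamma^{0c}_{2j}$ from a fixed embedding of $M_u$ and then quantifying over all $\gamma \in C_{2j}$ in the contradiction argument; the paper avoids this by seeking a single colour $\gamma^{0c}_{2n}$ serving both $u^0_2$ and $w^0_2$ at each inductive step, and by using the four-vertex graphs $D$, $D_2$, $D_n$ precisely so that the forced cover set ${\cal D}$ is defined on a controlled number of parts and is provably $(2,n)$-free, which is what actually collides with the choice of $m$ minimal and with case $(\dag)$.
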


\begin{proof}
For each $c \in \{ 1, \ldots, t-1 \}$ the idea is to construct the graph $B^{c*}$ by amalgamating $A^{c/0}$ and $A^*$ over their shared 0-coloured $E_{01}$-edge in such a way that for each $j \in I^c - \{0,1,2\}$, we have $F(u^0_2 v^c_j) = F(w^0_2 v^c_j) \in C_{2j}$. 
We always use free amalgamation (that is, other than the vertices of the $E_{01}$-edge, no other vertices are identified in the amalgamation).
We construct $B^{c*}$ in steps by showing that $B^{c*}_{|J|} := B^{c*}|J$ (the restriction of $B^{c*}$ to the parts $\underset{j \in J} \bigcup V_j$) is realized in $G$ for each initial segment $J$ of $I^{c*}$, by induction on the size of $J$. 

Note that while the parts $V_0, V_1, V_2$ are fixed across all of these graphs that we construct (the $B^{c*}$ for each $c \in \{1, \ldots, t-1\}$), for the construction of each one individually, we may change the enumeration of the other parts $V_3, \ldots, V_{m-1}$ so that this step by step process is as `nice' as possible. For instance, take some $c \in \{1, \ldots, t-1\}$, then without loss of generality, but for ease of notation, we may assume that $I^{c*}= \{0,1, \ldots, |I^{c*}|-1 \}$.

The ability to change the enumeration of the parts $V_3, \ldots, V_{m-1}$ for different $c \in \{ 1, \ldots, t-1 \}$ is important in particular when $|I^{c*}|=m$. If possible, we would like to ensure that $m-1 \notin I^c$, and in most cases this can be done immediately. For instance, if $|I^c|<m-1$, then there is certainly at least one part $V_{i_c}$ with $i_c \in \{3, \ldots, m-1\}$ on which $A^c$ is not defined, and so we may assume that $m-1 \notin I^c$. In general, we have two cases to consider. If $A^0$ is a triangle, then since ${\cal A}$ is a star cover set, there is no monic in ${\cal A}$ defined on all of the parts $V_3, \ldots, V_{m-1}$. So for each $c \in \{ 1, \ldots, t-1 \}$, again there is at least one part $V_{i_c}$ with $i_c \in \{3, \ldots, m-1\}$ on which $A^c$ is not defined, and so we may assume that $m-1 \notin I^c$. 

Otherwise, if $A^0$ is defined on at least 4 parts, then we have two further subcases to consider. 
Firstly, if ${\cal A}$ is $(i,j)$-free for some $i,j \in \{2, \ldots, m-1\}$, then recall that we chose part $V_2$ to work over such that $i \ne 2 \ne j$ (and $2 \in I^0$). So ${\cal A}$ is $(i,j)$-free for some $i,j \in \{3, \ldots, m-1\}$, that is, no monic in ${\cal A}$ is defined on both of the parts $V_i$ and $V_j$. Thus again for each $c \in \{ 1, \ldots, t-1 \}$ there is at least one part (other than $V_2$) on which $A^c$ is not defined, and so we may assume that $m-1 \notin I^c$. 

Finally, we are left with the case $(\dag)$ where $A^0$ is defined on at least 4 parts but ${\cal A}$ is not $(i,j)$-free for any choice of $i,j \in \{2, \ldots, m-1\}$. In this case, we cannot yet ensure that there is no monic in ${\cal A}$ defined on all of the parts $V_3, \ldots, V_{m-1}$ (so we may have some $A^c$ with $m-1 \in I^c$); but we shall see that if there is such a monic, this contradicts the minimality of ${\cal A}$.

We now proceed with the induction, which starts with the initial case $|J|=3$, $J = \{ 0,1,2 \}$.
In this case there are no new edges formed in the amalgam $B^{c*}_3$ of $A^*|012$ and $A^c|012$ (the restriction of these graphs to the parts $V_0, V_1, V_2$) over their shared $0$-coloured $E_{01}$-edge.
In particular, if $2 \notin I^c$, then $B^{c*}_3$ is actually just $A^*|012$.

Now for the induction step, suppose we have realized $B^{c*}_n$ in $G$ for some $n$ with $3 \le n < |I^{c*}|$, and we show that we can realize $B^{c*}_{n+1}$ in $G$. 
As described above, without loss of generality, we may assume that $J = \{ 0,1,2,3, \ldots, n \}$.
We amalgamate $A^*|J$ and $A^{c/0}|J$ over $B^{c*}_n$, to obtain $B^{c*}_{n+1}$. 
Note that $B^{c*}_{n+1}$ has at most two more vertices than $B^{c*}_n$: $v^0_{n} \in A^*|J$ (if $n \in I^0$) and $v^c_{n} \in A^{c/0}|J$ (if $n \in I^c$). 
We just need to decide the colours of the new edges incident to these vertices.

Firstly, if $n \in I^0$, then consider the new edges incident to $v^0_n$. These are the edges $v^c_iv^0_n$, for each $i \in I^c$ with $2 \le i < n$. 
To determine the colours of these edges, consider amalgamating $B^{c*}_n$ and $A^*|J$ (which are certainly both realized, by the induction hypothesis, and our earlier construction of $A^*$ which is realized in $G$, respectively) over $A^*|(J-\{n\})$, to obtain some graph $X_{n+1}:= B^{c*}_{n+1} - \{v^c_n\}$, with new edges $v^c_iv^0_n$, for each $i \in I^c$ with $2 \le i < n$.
Then let $\delta^{c0}_{in}:= F_{X_{n+1}}(v^c_iv^0_n) \in C_{in}$ for each $i \in I^c$ with $2 \le i < n$.
Then we shall colour the corresponding edges in $B^{c*}_{n+1}$ as in this amalgam $X_{n+1}$: that is, for each $i \in I^c$ with $2 \le i < n$, let $F_{B^{c*}_{n+1}}(v^c_i v^0_n) = \delta^{c0}_{in} := F_{X_{n+1}}(v^c_i v^0_n)$.
Otherwise, if $n \notin I^0$, then let $X_{n+1}:=B^{c*}_n$.

Next if $n \in I^c$, then first consider the new edges incident to $v^c_n$, but not incident to $u^0_2$ or $w^0_2$. These are the edges $v^0_jv^c_n$, for each $j \in I^0$ with $3 \le j < n$.
To determine the colours of these edges, consider amalgamating $B^{c*}_{n+1} - \{u^0_2, w^0_2, v^c_n\} = X_{n+1} - \{u^0_2, w^0_2\}$ and $A^{c/0}|J$ (which are certainly both realized, by the previous paragraph, and the fact that ${\cal A}$ is good with key colour 0, respectively) over $A^*|(J-\{n\})$, to obtain some graph $Y_{n+1}:= B^{c*}_{n+1} - \{u^0_2, w^0_2\}$, with new edges $v^0_jv^c_n$, for each $j \in I^0$ with $3 \le j < n$.
Then let $\delta^{0c}_{jn}:= F_{Y_{n+1}}(v^0_jv^c_n) \in C_{jn}$ for each $j \in I^0$ with $3 \le j < n$.
Then we shall colour the corresponding edges in $B^{c*}_{n+1}$ as in this amalgam $Y_{n+1}$: that is, for each $j \in I^0$ with $3 \le j < n$, let $F_{B^{c*}_{n+1}}(v^0_jv^c_n) = \delta^{0c}_{jn} := F_{Y_{n+1}}(v^0_jv^c_n)$.

Finally, if $n \in I^c$, then we are just left with determining the colours of the new edges $u^0_2 v^c_n$ and $w^0_2 v^c_n$.

\begin{claim} \label{claimBc*n+1}
For some $\gamma^{0c}_{2n} \in C_{2n}$, the graph $B^{c*}_{n+1}$ with $F(u^0_2 v^c_n) = F(w^0_2 v^c_n) = \gamma^{0c}_{2n}$ (and all other edges coloured as described above) is realized in $G$.
\end{claim}

\begin{proof}[Proof of claim]
Suppose otherwise, and aim for a contradiction.
For each $e \in C_{2n}$, let $Z^e$ denote the graph $B^{c*}_{n+1}$ with $F(u^0_2 v^c_n) = F(w^0_2 v^c_n) = e$ (and all other edges coloured as described above). 
So we assume that for each $e \in C_{2n}$, the graph $Z^e$ is omitted from $G$.
That is, for each $e \in C_{2n}$ some monic subgraph $Y^e$ of $Z^e$ is minimally omitted.

Note that for each $e \in C_{2n}$, all monic subgraphs of $Z^e$ that do not include $v^c_n$ and either $u^0_2$ or $w^0_2$, are certainly realized in $G$ because they must be monic subgraphs of either $Z^e - \{v^c_n\} = X_{n+1}$ or $Z^e - \{u^0_2, w^0_2\} = Y_{n+1}$. So for each $e \in C_{2n}$, the minimally omitted monic $Y^e$ must contain $v^c_n$ and either $u^0_2$ or $w^0_2$. 
That is, for each $e \in C_{2n}$, some monic $Y^e$ with an $e$-coloured $E_{2n}$-edge, is minimally omitted.
Thus ${\cal Y} := \{ Y^e : e \in C_{2n} \}$ is a $C_{2n}$-cover set. 

We show that it is not possible to find such a $C_{2n}$-cover set.
Consider the following graphs: 

$D$ is a $(J- \{2,n\})$-graph with vertices $u_0, w_0 \in V_0$; $u_1, w_1 \in V_1$; 
$v^0_i \in V_i$ for each $i \in I^0$ with $3 \le i \le n$; 
$v^c_i \in V_i$ for each $i \in I^c$ with $3 \le i \le n$; 
such that $D- \{w_0, w_1\}$ and $D- \{u_0, u_1\}$ are both copies of $B^{c*}_n - V_2 = B^{c*}_n - \{u^0_2, w^0_2, v^c_2\}$, and $F(u_0 u_1) = F(w_0 w_1) = 0, ~F(u_0 w_1) = F(w_0 u_1) = d \in C_{01}$ (for some $d \in C_{01}$ to be determined).

$D_2 = D \cup \{v^0_2\}$ (where $v^0_2 \in V_2$) is a $(J- \{n\})$-graph which agrees with $D$ where defined, and such that $D_2 - \{w_0, w_1\}$ is a copy of $B^{c*}_n - \{w^0_2, v^c_2\}$, and $D_2 - \{u_0, u_1\}$ is a copy of $B^{c*}_n - \{u^0_2, v^c_2\}$. 
That is, the new edges are coloured as follows: 
$F(u_0 v^0_2) = \alpha, ~F(w_0 v^0_2) = 0' \in C_{02}$;
$F(u_1 v^0_2) = 0', ~F(w_1 v^0_2) = \beta \in C_{12}$;
$F(v^0_2 v^0_i) = 0' \in C_{2i}$ for each $i \in I^0$ with $3 \le i \le n$;
$F(v^0_2 v^c_i) = \gamma^{0c}_{2i} \in C_{2i}$ for each $i \in I^c$ with $3 \le i \le n$. 

$D_n = D \cup \{v^0_n\}$ (where $v^0_n \in V_n$) is a $(J- \{2\})$-graph which agrees with $D$ where defined, and such that $D_n - \{w_0, w_1\}$ and $D_n - \{w_0, w_1\}$ are both copies of $Y_{n+1} - \{v^c_2, v^0_n\}$. 
That is, the new edges are coloured as follows: 
$F(u_0 v^c_n) = F(w_0 v^c_n) = c' \in C_{0n}$;
$F(u_1 v^c_n) = F(w_1 v^c_n) = c' \in C_{1n}$;
$F(v^0_i v^c_n) = \delta^{0c}_{in} \in C_{in}$ for $i \in I^0$ with $3 \le i \le n$; 
$F(v^c_i v^c_n) = c' \in C_{in}$ for $i \in I^c$ with $3 \le i \le n$. 

\begin{claim} \label{claimD}
For some $\zeta \in C_{01}$, the graphs $D, D_2, D_n$ with $F(u_0 w_1) = F(w_0 u_1) = \zeta$ (and all other edges coloured as described above) are all realized in $G$.
\end{claim}

\begin{proof}[Proof of claim]
Recall that by Corollary~\ref{monic}, a graph is realized in $G$ if and only if all its monic subgraphs are realized in $G$. So we aim to show that for some $\zeta \in C_{01}$ all monic subgraphs of $D, D_2, D_n$ are realized in $G$.

First observe that all monic subgraphs which do not contain both $u_0, w_1$, or both $w_0, u_1$, are certainly realized, since such monics are also (copies of) subgraphs of $B^{c*}_n - \{w^0_2, v^c_2\}$, $B^{c*}_n - \{u^0_2, v^c_2\}$, or  $Y_{n+1} - \{v^c_2, v^0_n\}$, and we know that $B^{c*}_n, Y_{n+1}$ are certainly both realized (by the induction hypothesis, and the construction preceding Claim~\ref{claimBc*n+1}).

So if $D, D_2, D_n$ are not all realized for any $d \in C_{01}$, then we can find a $C_{01}$-cover set ${\cal D} = \{D^d: d \in C_{01} \}$ such that each $D^d$ is a monic subgraph of $D_2$ or $D_n$ containing either $u_0, w_1$ or $w_0, u_1$ (so $D^d$ has a $d$-coloured $E_{01}$-edge).
However we shall show that no such $C_{01}$-cover set can exist. 

Suppose that there is a $C_{01}$-omission set ${\cal D}^*$ based on ${\cal D}$. Observe that it must have a code $(0,1,2,l; \alpha, c_{0l}, \beta, c_{1l})$ for some $l$ with $3 \le l \le n$, otherwise ${\cal D}^*$ is also a $C_{01}$-omission set based on ${\cal A}$ (contradicting the fact that ${\cal A}$ was chosen such that no such omission set exists). 
But note that $T^{0 \alpha \beta}$ (the $012$-triangle with $0$-coloured $E_{01}$-edge, $\alpha$-coloured $E_{02}$-edge, and $\beta$-coloured $E_{12}$-edge) is realized in $G$; 
for each $i \in I^0$ with $3 \le i \le n$ the $01i$-triangle $T^0_i$ with $0$-coloured $E_{01}$-edge, $0'$-coloured $E_{0i}$-edge, and $0'$-coloured $E_{1i}$-edge is a proper subgraph of minimally omitted $A^0$ and so is realized in $G$; 
and for each $i \in I^c$ with $3 \le i \le n$ the $01i$-triangle $T^c_i$ with $0$-coloured $E_{01}$-edge, $c'$-coloured $E_{0i}$-edge, and $c'$-coloured $E_{1i}$-edge is a subgraph of $A^{c/0}$ which is realized in $G$ since ${\cal A}$ is a good cover set. So no such omission set ${\cal D}^*$ exists.

Thus ${\cal D}$ is a $C_{01}$-cover set with no omission set based on it, but we shall see that this contradicts the choice of ${\cal A}$ as a minimal such $C_{01}$-cover set.
Observe that ${\cal D}$ is a $C_{01}$-cover set in $\Lambda$ defined on $n+1$ parts, containing no monics defined on both $V_2$ and $V_n$, that is, ${\cal D}$ is $(2,n)$-free. 
Note that if $n+1 < m$, this clearly contradicts the choice of $G$ and $m$ (recall that $m$ was taken to be the minimum size possible such that there is a homogeneous $m$-generic graph $G$ such that $O(G)$ contains a $C_{01}$-cover set, but no omission set based on it; and then certainly such a cover set is defined on all $m$ parts). 

Otherwise, we are in the case that $n+1 = m$ and $n = m-1 \in I^c$. Now we must be in the case $(\dag)$, that is, where ${\cal A}$ is not $(i,j)$-free for any choice of $i,j \in \{2, \ldots, m-1\}$ (and $A^0$ is defined on at least 4 parts)---recall the discussion at the beginning of the proof of Claim~\ref{claimBc*}. But now ${\cal D}$ is a $C_{01}$-cover set in $\Lambda$ with no omission set based on it, which is defined on $m$ parts and is $(2,m-1)$-free. This contradicts the initial choice of ${\cal A}$ as minimal, so in fact this case cannot arise.

Hence there is no such $C_{01}$-cover set ${\cal D}$ in $O(G)$, and so there is some $\zeta \in C_{01}$ such that all monic subgraphs of $D, D_2, D_n$ with $F(u_0 w_1) = F(w_0 u_1) = \zeta$ are realized in $G$. Thus by Corollary~\ref{monic}, $D, D_2, D_n$ with $F(u_0 w_1) = F(w_0 u_1) = \zeta$ are realized in $G$. So Claim~\ref{claimD} is proved.
\end{proof}

Now amalgamate $D_2$ and $D_n$ over $D$ to obtain $D^c$. The only new edge is $v^0_2 v^c_n$, and suppose that $F_{D^c}(v^0_2 v^c_n) = \varepsilon \in C_{2n}$. 

Recall that $Z^\varepsilon$ is the graph $B^{c*}_{n+1}$ with $F(u^0_2 v^c_n) = F(w^0_2 v^c_n) = \varepsilon$. Observe that $D^c - \{ w_0, w_1\}$ is isomorphic to $Z^\varepsilon - \{w^0_2, v^c_2, v^0_n \}$, and $D^c - \{ u_0, u_1\}$ is isomorphic to $Z^\varepsilon - \{u^0_2, v^c_2, v^0_n \}$.
But now note that $Y^\varepsilon$ is a monic subgraph (containing $v^c_n$ and either $u^0_2$ or $w^0_2$) of one of these subgraphs of $Z^\varepsilon$.
Then $Y^\varepsilon$ is realized in $D^c$ (which we have realized in $G$), which contradicts the assumption that we could find the $C_{2n}$-cover set ${\cal Y}$.

Thus there is no such $C_{2n}$-cover set, and so we can find $\gamma^{0c}_{2n} \in C_{2n}$ such that $B^{c*}_{n+1}$ with $F(u^0_2 v^c_n) = F(w^0_2 v^c_n) = \gamma^{0c}_{2n}$ (and all other edges coloured as previously described) is realized in $G$ (for instance let $\gamma^{0c}_{2n}:= \varepsilon$), as required. So Claim~\ref{claimBc*n+1} is proved.
\end{proof}

Thus by induction we construct the graph $B^{c*}$ which is realized in $G$, which is an amalgam of $A^{c/0}$ and $A^*$ over their shared 0-coloured $E_{01}$-edge such that for each $j \in I^c - \{0,1,2\}$, we have $F(u^0_2 v^c_j) = F(w^0_2 v^c_j) \in C_{2j}$. So Claim~\ref{claimBc*} is proved.
\end{proof}

We now form an $m$-partite graph which is realized in $G$ by amalgamating all of the graphs $B^{c*}$, for each $c \in \{1, \ldots, t-1\}$, over $A^*$. Note that at this stage we need to fix the enumeration of all of the parts, because we consider the set of all of the $B^{c*}$. We may perform the amalgamations one by one: first amalgamate $B^{1*}$ and $B^{2*}$, then amalgamate the result with $B^{3*}$, and so on up to $B^{t-1*}$. Each time we may use free amalgamation (that is, we do not identify vertices other than those in $A^*$) and this is straightforward. We call the resulting $m$-partite graph $B^*$.

Now consider $H_0 = B^* - \{v_1, u^0_2 \}$ (an $I_0$-graph with $I_0 = \{0,2,3,\dots,m-1\}$) and $H_1 = B^* - \{v_0, w^0_2 \}$ (an $I_1$-graph with $I_1 = \{1,2,3,\dots,m-1\}$). 
Observe that $H_0 - \{v_0\}$ and $H_1 - \{v_1\}$ (both $I_{2}$-graphs with $I_2 = \{2,3,\dots,m-1\}$) are isomorphic: consider the map which sends $u^0_2$ to $w^0_2$ and fixes all other vertices; this is an isomorphism since for any $x \in B^*|I_3$ with $I_3 = \{3,4,\ldots, m-1\}$, by our construction $F_{B^*}(u^0_2 x) = F_{B^*}(w^0_2 x)$.

Finally amalgamate $H_0$ and $H_1$ over this common substructure to form $H$ which is realized in $G$. In this amalgamation $u^0_2 \in H_1$ and 
$w^0_2 \in H_0$ are identified, and we relabel this single vertex $v^0_2$. The only new edge of $H$ is $v_0 v_1 \in E_{01}$, say 
$F_H(v_0 v_1) = \gamma \in C_{01}$. Let $\delta \in M$ be such that $A^\gamma \le_{01} A^\delta$. But now consider the monic subgraph 
$\langle v_0, v_1, v^\delta_i :i \in I^\gamma - \{0,1\} \rangle$ of $H$. This is a realization of $A^\gamma$ in $G$---which contradicts the assumption 
that $A^c$ was minimally omitted for each $c \in C_{01}$.

This final contradiction means that our initial assumption that there was no $C_{01}$-omission set based on the minimal star $C_{01}$-cover set 
${\cal A}$ must be wrong, and hence we have finished. 
\end{proof}

Thus we have proved the non-complication theorem for $m$-generic graphs. It remains to fully define and verify the GCSSA, and show that it can be used to find star cover sets as claimed (Lemma~\ref{starcoverset}).

\subsection{The good cover set search algorithm} \label{sectionGCSSA}

Recall the outline of the algorithm as described before Lemma~\ref{goodcoverset}, which we now expand. The input for the algorithm is any cover set 
${\cal B}_0 \in \Lambda$. At each step $i$, a particular maximal member $B^{\gamma_i}$ of the cover set ${\cal B}_i \in \Lambda$ is considered (that is, a maximal member of the quasi-order $({\cal B}_i, \le_{01})$). If $B^{\gamma_i}$ (which is called the \emph{test monic} at this step) is a key monic, then ${\cal B}_i$ is good, and the algorithm stops. Otherwise another minimally omitted monic $D^{\gamma_i} \in O(G)$ is found such 
that $D^{\gamma_i} \subseteq B^{\delta_i / \gamma_i}$ for some suitable $\delta_i \in C_{01}$, $\delta_i \ne \gamma_i$, with 
$B^{\delta_i} \in {\cal B}_i$, and $B^{\gamma_i}$ is replaced by $D^{\gamma_i}$ in the next step to obtain ${\cal B}_{i+1} \in \Lambda$ (note that 
${\cal B}_{i+1} \preceq {\cal B}_i$). 
By the careful choice of the test monic $B^{\gamma_i}$ at each step, if the algorithm does not find a good cover set, then we shall show that it still stops after a finite number of 
steps, and when it does, the cover set ${\cal D}$ at that step has one fewer maximal element (up to off-$E_{01}$-isomorphism) than ${\cal B}_0$.

In the algorithm, the most important characteristic of the cover sets ${\cal B}_i$ is what they look like as quasi-orders. To see the action of the algorithm on an initial cover set ${\cal B}_0$, we picture the sequence of quasi-orders $({\cal B}_0, \le_{01}), ({\cal B}_1, \le_{01}), \ldots$ that are produced at subsequent steps. At each step, there are exactly $|C_{01}|$ elements in the quasi-order (for each $c \in C_{01}$, exactly one element has a $c$-coloured $E_{01}$-edge). From step $i$ to step $i+1$, exactly one element `moves' in the quasi-order, as $B^{\gamma_i}$ is replaced by $D^{\gamma_i}$ (note that these monics have the same colour $E_{01}$-edge). That is, if the maximal element $B^{\gamma_i}$ is not a key monic, then it is replaced by another minimally omitted monic $D^{\gamma_i}$ which is an off-$E_{01}$-subgraph of some other maximal element $B^{\delta_i}$ of the cover set.

To ensure that the algorithm works as asserted, it is important that at each step $i$ the test monic $B^{\gamma_i}$ is carefully chosen. Thus the test monics are always taken from the same `part' of the quasi-order, which is determined by the choice of the first test monic $B^{\gamma_0}$. Specifically, the test monics are the members of the cover set which are off-$E_{01}$-subgraphs of maximal $B^{\gamma_0}$ but not off-$E_{01}$-subgraphs of any other maximal element of the cover set (we call this `part' the \emph{test set}). 
We make replacements using the property that at that step the cover set is not good with the test monic as a key monic. By Remark~\ref{key}, at each step the test monic must be a maximal element of the cover set at that step, and the minimally omitted monic which we replace it by will be an off-$E_{01}$-subgraph of some other maximal element of the cover set at that step. 
By working through the members of the test set in a prescribed order (namely where possible choosing $B^{\gamma_{i+1}} \le_{01} B^{\gamma_i}$), we shall see that the same monic is never tested twice, so since there are only finitely many monics in $O(G)$, the algorithm must stop. In particular, if a good cover set is not found, then in a finite number of steps all members of the test set will have been moved to a different part of the quasi-order, and so the algorithm stops with final cover set ${\cal D} \in \Lambda$ which indeed has one fewer maximal element (up to off-$E_{01}$-isomorphism) than the initial cover set ${\cal B}_0$.

With this intuition in mind, let us now formally describe the algorithm. 

\begin{alg} 
The good cover set search algorithm (GCSSA).

\noindent \textbf{Initial step $0$:}
Let ${\cal B}_0$ be a cover set in $\Lambda$. 
Pick a colour $\gamma_0 \in C_{01}$ such that $B^{\gamma_0}_0$ is a maximal element of ${\cal B}_0$. 
Let $\Delta_0 := \{ c \in C_{01} : \textrm{ for all } d \in C_{01}, \textrm{ if } B^c_0 \le_{01} B^d_0, \textrm{ then } B^d_0 \le_{01} B^{\gamma_0}_0 \}$. 

If ${\cal B}_0$ is good with $B^{\gamma_0}_0$ as a key monic, let ${\cal B} := {\cal B}_0$ and stop. 

Otherwise, $\gamma_0$ is not a key colour for the cover set ${\cal B}_0$, so there is some colour $\delta_0 \in C_{01}$ with $B^{\delta_0}_0 \not\le_{01} B^{\gamma_0}_0$ such that $B^{\delta_0/\gamma_0}_0$ is not realized in $G$ (note in particular that $\delta_0 \in C_{01} - \Delta_0$). 
Let $D^{\gamma_0}_0 \in O(G)$ be a minimally omitted subgraph of $B^{\delta_0/\gamma_0}_0$. 
Note that $D^{\gamma_0}_0$ is certainly defined on $V_0$ and $V_1$, since $D^{\gamma_0}_0 \subseteq B^{\delta_0/\gamma_0}_0$ and $B^{\delta_0}_0$ is minimally omitted. Go to step $1$.

\sk
\noindent \textbf{Iterative step:}
We now describe the process for going from step $i$ to step $i+1$ in the algorithm. So first of all, we state what we have at the end of step $i$. 

After step $i$ we have the following: a cover set ${\cal B}_i = \{ B^c_i : c \in C_{01} \} \in \Lambda$; a subset of the colours $\Delta_i \subset C_{01}$;
a distinguished colour $\gamma_i \in \Delta_i$ such that $B^{\gamma_i}_i$ is a maximal element of ${\cal B}_i$; 
another distinguished colour $\delta_i \in C_{01}$ (such that $B^{\delta_i}_i \not\le_{01} B^{\gamma_i}_i$), and a monic $D^{\gamma_i}_i \in O(G)$ (with $\gamma_i$-coloured $E_{01}$-edge) which is a minimally omitted subgraph of $B^{\delta_i/\gamma_i}_i$.

\sk
\noindent \textbf{Step $i+1$:} 
Construct a new cover set ${\cal B}_{i+1} := \{ B^c_{i+1} : c \in C_{01} \}$, where $B^{\gamma_i}_{i+1} := D^{\gamma_i}_i$, and $B^c_{i+1} := B^c_i$ for each $c \in C_{01} - \{\gamma_i\}$. 
Note that ${\cal B}_{i+1} \in \Lambda$, $B^{\gamma_i}_{i+1} \le_{01} B^{\delta_i}_{i+1}$, and ${\cal B}_{i+1} \preceq {\cal B}_i$.

Now let 
\[ \Delta_{i+1} := \left\{ \begin{array}{ll}
\Delta_i & \textrm{if $\delta_i \in \Delta_i$}\\
\Delta_i - \{ \gamma_i \} & \textrm{if $\delta_i \not\in \Delta_i.$}
\end{array} \right.
\] 

If $\Delta_{i+1} = \emptyset$, then let ${\cal D} := {\cal B}_{i+1}$ and stop.
Otherwise, choose $\gamma_{i+1} \in \Delta_{i+1}$ such that $B^{\gamma_{i+1}}_{i+1}$ is a maximal element of $({\cal B}_{i+1}, \le_{01})$, and if possible $B^{\gamma_{i+1}}_{i+1} = B^{\gamma_{i+1}}_i \le_{01} B^{\gamma_i}_i$.

If ${\cal B}_{i+1}$ is good with $B^{\gamma_{i+1}}_{i+1}$ as a key monic, then let ${\cal B} := {\cal B}_{i+1}$ and stop. 

Otherwise, $\gamma_{i+1}$ is not a key colour for the cover set ${\cal B}_{i+1}$, so there is some colour $\delta_{i+1} \in C_{01}$ with $B^{\delta_{i+1}}_{i+1} \not\le_{01} B^{\gamma_{i+1}}_{i+1}$ such that $B^{\delta_{i+1}/\gamma_{i+1}}_{i+1}$ is not realized in $G$. 
If possible, find such $\delta_{i+1} \in C_{01} - \Delta_{i+1}$ (this will be a `good replacement'). 
Let $D^{\gamma_{i+1}}_{i+1} \in O(G)$ be a minimally omitted subgraph of $B^{\delta_{i+1}/\gamma_{i+1}}_{i+1}$. 
(Note that as in step $0$, the monic $D^{\gamma_{i+1}}_{i+1}$ is certainly defined on $V_0$ and $V_1$, since $D^{\gamma_{i+1}}_{i+1} \subseteq B^{\delta_{i+1}/\gamma_{i+1}}_{i+1}$ and $B^{\delta_{i+1}}_{i+1}$ is minimally omitted.)
Go to step $i+2$.
\end{alg}

\sk
We verify that the good cover set search algorithm works as stated. 

\begin{lem}
Given a homogeneous $m$-generic graph $G$ and a $C_{01}$-cover set ${\cal B}_0 \in \Lambda$ in $O(G)$, the good cover set search algorithm either stops when it finds a good $C_{01}$-cover set ${\cal B} \in \Lambda$ with ${\cal B} \preceq {\cal B}_0$, or when it finds a $C_{01}$-cover set ${\cal D} \in \Lambda$ with ${\cal D} \prec {\cal B}_0$ and with one fewer maximal element (up to off-$E_{01}$-isomorphism) than ${\cal B}_0$ (that is, $t_{\cal D} = t_{{\cal B}_0} - 1$). 
\end{lem}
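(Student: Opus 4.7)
The plan is to verify the three assertions of the lemma: that the GCSSA terminates, that a ``good'' output is a good cover set $\preceq {\cal B}_0$, and that a ``$\Delta_{i+1}=\emptyset$'' output is a cover set in $\Lambda$ strictly below ${\cal B}_0$ in $\prec$ with exactly one fewer $=_{01}$-class of maximal elements.

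First, I would check by induction that each ${\cal B}_i$ produced is a $C_{01}$-cover set in $\Lambda$: the replacement $D^{\gamma_i}_i$ lies in $O(G)$ and has a $\gamma_i$-coloured $E_{01}$-edge, so substituting it for $B^{\gamma_i}_i$ preserves the property of having exactly one minimally omitted monic with each colour on $E_{01}$. For termination, I would argue that no monic is ever chosen as a test monic at two different steps. The key observations are (i) $D^{\gamma_i}_i \neq B^{\gamma_i}_i$ because $B^{\gamma_i}_i \not\le_{01} B^{\delta_i}_i$ while by construction $D^{\gamma_i}_i \le_{01} B^{\delta_i}_{i+1}$, and (ii) the preferential rule ``choose $B^{\gamma_{i+1}}_{i+1} \le_{01} B^{\gamma_i}_i$ when possible'' together with the weakly decreasing nature of the sets $\Delta_i \subseteq \Delta_0$ forces the test monics to walk downward within the $\le_{01}$-downset of the original class $[B^{\gamma_0}_0]$. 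Since the language is finite, only finitely many monics are available in $O(G)$, and no monic is ever revisited as a test monic, so the procedure must halt.

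If the algorithm stops at step $i$ with ${\cal B}_i$ good, set ${\cal B} := {\cal B}_i$. By induction on $j$, each iterative step satisfies ${\cal B}_{j+1} \preceq {\cal B}_j$: the only member that has changed, namely $B^{\gamma_j}_{j+1} = D^{\gamma_j}_j$, is itself an off-$E_{01}$-subgraph of $B^{\delta_j}_{j+1}$; so any maximal element of $({\cal B}_{j+1},\le_{01})$ is either unchanged from ${\cal B}_j$ (and so is an off-$E_{01}$-subgraph of some maximal element of ${\cal B}_j$ trivially) or is one of the remaining maximal elements already present. Transitivity of $\preceq$ then gives ${\cal B} \preceq {\cal B}_0$, as required.

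The main obstacle is the case where the algorithm stops with $\Delta_{i+1}=\emptyset$ and ${\cal D}:={\cal B}_{i+1}$; here we must establish $t_{\cal D} = t_{{\cal B}_0}-1$. The strategy is to show (a) the $=_{01}$-class of $B^{\gamma_0}_0$ retains no representative among the maximal elements of ${\cal D}$, and (b) every other class of maximal elements of ${\cal B}_0$ has exactly one representative, and no new classes of maximal elements appear. For (a), I would first show that if $B^c_0 =_{01} B^{\gamma_0}_0$ then $c \in \Delta_0$ (elements of $[B^{\gamma_0}_0]$ sit in the exclusive downset defining $\Delta_0$). Hence each such colour is removed from $\Delta$ at some step $j \le i$, at which point $B^c_{j+1} = D^c_j \le_{01} B^{\delta_j}_{j+1}$ with $\delta_j \notin \Delta_j$; by an inner induction one argues that $B^{\delta_j}_{j+1}$ (and therefore every later version of $B^c$) lies $\le_{01}$-below some maximal element of ${\cal D}$ that is \emph{not} in $[B^{\gamma_0}_0]$, so $B^c_{i+1}$ is not itself a maximal element in that class. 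For (b), any maximal element $B^e_0$ whose class is not $[B^{\gamma_0}_0]$ has $e \notin \Delta_0$ (by maximality, $B^e_0 \not\le_{01} B^{\gamma_0}_0$), hence $B^e$ is untouched by the algorithm and $B^e_{i+1} = B^e_0$; the delicate part is verifying that no new $\le_{01}$-relation $B^e_{i+1} \le_{01} B^d_{i+1}$ is created for some $d$ making $B^e$ non-maximal, which is ruled out because each $D^c_j$ is itself minimally omitted and has a distinct $E_{01}$-colour from $B^e_0$, and the priority clause in the algorithm (``if possible, find $\delta_{i+1} \in C_{01} - \Delta_{i+1}$'') ensures replacements proceed to genuinely external parts of the quasi-order. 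This non-interference claim is the technical crux of the argument.
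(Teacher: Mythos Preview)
Your overall strategy matches the paper's, and your argument for ${\cal B} \preceq {\cal B}_0$ in the ``good'' case is fine. However, there is a genuine gap in both your termination argument and your treatment of the $\Delta_{i+1}=\emptyset$ case, and it is the same missing observation in each.

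The key fact you never state or use is that \emph{the quasi-order cannot grow upwards}: every replacement $D^{\gamma_j}_j$ satisfies $D^{\gamma_j}_j \le_{01} B^{\delta_j}_j$, so inductively every member of ${\cal B}_i$ is an off-$E_{01}$-subgraph of some member of ${\cal B}_0$ (indeed of some maximal element of ${\cal B}_0$). This is what the paper uses, and it is precisely what you need in two places. First, for termination: your claim that ``no monic is tested twice'' requires showing that a previously tested monic $B^{\alpha}_k$ can never reappear as a replacement at a later step $l$. For that you must rule out the existence of any $B^c_l$ with $B^{\alpha}_k \le_{01} B^c_l$; the strict case is excluded because $B^{\alpha}_k$ was maximal at step $k$ and nothing strictly above it can be introduced later (the quasi-order cannot grow upwards), while the $=_{01}$ case is excluded by combining this with the preferential downward selection rule. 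Your sketch gestures at the preferential rule but does not supply the ``cannot grow upwards'' half, without which the argument does not go through.

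Second, your justification for part (b) is incorrect. You write that no new relation $B^e_{i+1} \le_{01} B^d_{i+1}$ is created ``because each $D^c_j$ is itself minimally omitted and has a distinct $E_{01}$-colour from $B^e_0$, and the priority clause \ldots\ ensures replacements proceed to genuinely external parts of the quasi-order.'' Neither of these facts prevents $B^e_0 \le_{01} D^c_j$. The correct argument is again the upward-growth restriction: if $B^e_0 <_{01} B^d_{i+1}$ for some $d$, then since $B^d_{i+1}$ is $\le_{01}$ some member of ${\cal B}_0$, we would have $B^e_0$ strictly below a member of ${\cal B}_0$, contradicting its maximality there. The same principle shows no new $=_{01}$-classes of maximal elements appear. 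The priority clause ``if possible, find $\delta_{i+1} \in C_{01} - \Delta_{i+1}$'' is relevant only to making the $\Delta_i$ shrink (so that $\Delta_{i+1}=\emptyset$ is eventually reached in the non-good case), not to the non-interference claim.
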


\begin{proof}
We consider applying the GCSSA to the $C_{01}$-cover set ${\cal B}_0 = \{ B^c_0 : c \in C_{01} \} \in \Lambda$. 
We aim to show that the formal algorithm works as asserted, by explaining the different parts of the steps, and verifying the action of these, as 
necessary.

In the initial step, an arbitrary maximal element $B^{\gamma_0}_0$ of ${\cal B}_0$ is chosen, and then the set $\Delta_0 \subset C_{01}$ is defined 
using $\gamma_0$. The set $\Delta_0$ is used to define the `part' of the quasi-order $({\cal B}_0, \le_{01})$ from which all test monics (for 
subsequent steps) will be taken (we shall see that the test monic at each step has a $c$-coloured $E_{01}$-edge for some $c \in \Delta_0$). This `part' is the subset ${\cal B}^{\Delta}_0 := \{B^c \in {\cal B}_0 : c \in \Delta_0 \}$ of ${\cal B}_0$, which (by the definition of $\Delta_0$) consists of all members of ${\cal B}_0$ which are off-$E_{01}$-subgraphs of $B^{\gamma_0}_0$ but not off-$E_{01}$-subgraphs of any other maximal element of ${\cal B}_0$ (we note that ${\cal B}^{\Delta}_0$ includes $B^{\gamma_0}_0$ itself, the first test monic).
(We may note that by Lemma~\ref{mincoverset}, ${\cal B}_0$ has at least two non-off-$E_{01}$-isomorphic maximal elements. So $\Delta_0$ is a proper non-empty subset of $C_{01}$.)

The rest of the inital step, which consists of testing whether $B^{\gamma_0}_0$ is a key monic, and if it is not, the definition of the colour $\delta_0$ and the monic $D^{\gamma_0}_0 \in O(G)$, should be straightforward, by Remark~\ref{key}.

We now consider the iterative step. 
Firstly a new cover set ${\cal B}_{i+1}$ is constructed from the previous one by replacing the monic $B^{\gamma_i}_i$ (which was not a key monic of ${\cal B}_i$) by the monic $D^{\gamma_i}_i \in O(G)$ (which is renamed $B^{\gamma_i}_{i+1}$). All other members of the cover set remain the same, except they are relabelled by the new subscript. It is clear that ${\cal B}_{i+1} \in \Lambda$, and ${\cal B}_{i+1} \preceq {\cal B}_i$ since $B^{\gamma_i}_{i+1} \le_{01} B^{\delta_i}_{i+1}$ (that is, $D^{\gamma_i}_i \le_{01} B^{\delta_i}_i$, which is obvious since $D^{\gamma_i}$ was chosen to be a (minimally omitted) subgraph of $B^{\delta_i/\gamma_i}_i$) and $B^c_{i+1} = B^c_i$ for all $c \in C_{01} - \{\gamma_i\}$.

The purpose of the rest of the iterative step is to first choose the next test monic $B^{\gamma_{i+1}}_{i+1}$ (which must be done carefully to ensure that we never test the same monic twice), and second, to test whether this is a key monic of the cover set ${\cal B}_{i+1}$, and if not to choose the colour $\delta_{i+1}$ and define $D^{\gamma_{i+1}}_{i+1}$. The second part (testing $B^{\gamma_{i+1}}_{i+1}$, defining $\delta_{i+1}$ and $D^{\gamma_{i+1}}_{i+1}$) is straightforward by Remark~\ref{key}, and works exactly as in the initial step. It remains to explain and verify the first part (choosing the next test monic).

Ideally, we would simply work through the subset ${\cal B}^{\Delta}_0$ (which we call the \emph{test set}) considering members one by one, testing whether any is a key monic, stopping if it is, and if it is not simply removing it from the set and moving on to the next test monic (choosing one which is maximal in the remaining subset). However, as we work though the algorithm, monics may be added to this test set as well as removed. This occurs precisely when it is not possible to make a `good replacement', and the test monic $B^{\gamma_i}_i$ must be replaced by $D^{\gamma_i}_i$ in the test set. Thus the test set at step $i$ for $i > 0$ is not necessarily simply a subset of the initial test set ${\cal B}^{\Delta}_0$. However, we do get a non-increasing sequence (which is preferable) when we only consider the sets of $E_{01}$-edge colours of the members of the test sets (rather than the test sets themselves). Thus we define the set $\Delta_{i+1}$ from $\Delta_i$, by either leaving it unchanged, or removing exactly one colour; and these colour sets determine the test sets.

In more detail, if $B^{\gamma_i}_i$ is not a key monic at step $i$,  then we make a replacement, aiming (if possible) to remove the colour $\gamma_i$ from the set $\Delta_i$, so that as often as possible this set decreases. At each step $i+1$ we either remove $\gamma_i$ from $\Delta_i$ to obtain $\Delta_{i+1}$, or leave the set unchanged (so $\Delta_{i+1} = \Delta_i$). This depends on whether or not we could make a `good replacement'---that is, whether we could find $\delta_i \in C_{01} - \Delta_i$ such that $B^{\delta_i/\gamma_i}_i$ is not realized in $G$, or if there was only such a $\delta_i \in \Delta_i$. It is easy to see that $\Delta_{i+1} \subseteq \Delta_i$ for all $i \ge 0$, and this is a proper subset precisely when we could make a good replacement. Then $\Delta_i \subseteq \Delta_0$ for all $i \ge 0$, and so the test monic at each step indeed has a $c$-coloured $E_{01}$-edge for some $c \in \Delta_0$.

We keep track of the part of the quasi-order from which the test monics are always chosen by using the $\Delta_i$ sets. At each step we must certainly choose a test monic from the test set which is maximal in the cover set, and we now explain how this is done in a specified way to prevent the algorithm from testing the same monic twice. 
At each step $i$, the test monic $B^{\gamma_i}_i$ is either a member of ${\cal B}^\Delta_0$ from the original cover set, or it is a replacement monic that was added at some previous step when no good replacement was possible.
Whenever possible we choose $\gamma_{i+1} \in \Delta_{i+1}$ such that $B^{\gamma_{i+1}}_i \le_{01} B^{\gamma_i}_i$, and $B^{\gamma_{i+1}}_{i+1} = B^{\gamma_{i+1}}_i$ is maximal in $({\cal B}_{i+1}, \le_{01})$. In the next paragraph we explain how by doing this, we remove the possibility of ever using a test monic from a previous step as a replacement monic. In particular, this means that a monic is never tested twice. When there is no such member of the test set below $B^{\gamma_i}_i$ in the quasi-order $({\cal B}_i, \le_{01})$, we simply choose any maximal element of $({\cal B}_{i+1}, \le_{01})$ as the next test monic.

Suppose that the test monic at step $k$ was $B^{\alpha}_k$, and we are now at step $l>k$ with test monic $B^{\beta}_l$ which is not a key monic. Suppose that $B^{\alpha/\beta}_k \in O(G)$ so if there exists $B^c_l$ with $B^{\alpha}_k \le_{01} B^c_l$ then we may consider $B^{\alpha/\beta}_k$ as a replacement monic at step $l$. But since $B^{\alpha}_k$ was the test monic at step $k$ it must have been maximal, so there are certainly no such $B^c_k$ with $B^{\alpha}_k <_{01} B^c_k$, and no such monics are introduced at later steps (a replacement monic is never strictly above any existing maximal element in the quasi-order, because it must be an off-$E_{01}$-subgraph of some element; so the quasi-order cannot `grow upwards'). Also, there are no such $B^c_l$ with $B^{\alpha}_k =_{01} B^c_l$, because such a monic must have been present in the cover set ${\cal B}_k$ (since the quasi-order cannot `grow upwards'), that is there existed $B^c_k = B^c_l =_{01} B^{\alpha}_k$; but then since we choose test monics which are maximal such that $B^{\gamma_{i+1}}_i \le_{01} B^{\gamma_i}_i$ if possible, $B^c_k$ would have been chosen as the test monic at a step after $k$, before step $l$. Having reached step $l$, we did not stop at such a step, so at that step the monic $B^c_k$ was removed and replaced; so it can't still be in the cover set at step $l$.

Since there are only finitely many monics in $O(G)$ and the algorithm never tests a monic twice, it must stop after a finite number of steps.
So either the algorithm stops by finding a good cover set ${\cal B}$, or after a finite number of steps we reach $\Delta_{i+1} = \emptyset$ and the algorithm stops with cover set $\cal D$. In the latter case, we have run out of test monics without finding a key monic for a good cover set, and the members of the initial test set ${\cal B}^{\Delta}_0$ have all been replaced by good replacements. That is, the maximal element $B^{\gamma_0}_0$ and all elements below  (or off-$E_{01}$-isomorphic to) $B^{\gamma_0}_0$ in the quasi-order $({\cal B}_0, \le_{01})$ but not below any other maximal element, have been removed and replaced by monics below (or off-$E_{01}$-isomorphic to) some other maximal element. Thus the final cover set $\cal D$ has one fewer maximal element (up to off-$E_{01}$-isomorphism) than the initial cover set (that is, $t_{\cal D} = t_{{\cal B}_0} - 1$), as asserted. (We may note that if the algorithm does not find a good cover set, then at each step we could find a good replacement for the test monic, and so the algorithm stops at step $|\Delta_0|$.) 
\end{proof}

Finally we can finish things off by showing how to use the good cover set search algorithm to find a star cover set based on any cover set.

\begin{proof}[Proof of Lemma~\ref{starcoverset}]
Let $G$ be a homogeneous $m$-generic graph which has a $C_{01}$-cover set ${\cal A}$ in $O(G)$. We aim to show that there is a star $C_{01}$-cover set in $\Lambda$ based on ${\cal A}$. 

By Lemma~\ref{goodcoverset}, we may certainly find a good cover set ${\cal B}$ with ${\cal B} \preceq {\cal A}$ using the GCSSA. If ${\cal B}$ is not a star cover set, then it must have a key monic $B^\alpha$ defined on exactly 3 parts with at least one other monic defined on all of the remaining parts, let $B^\beta \in {\cal B}$ be such a monic. When we have good cover sets which are not star cover sets, the problem is these `large' monics. However we may use the GCSSA to get rid of them. 

By applying the GCSSA to ${\cal B}$ starting with $\gamma_0 := \beta$, we find a new cover set ${\cal C}$ based on ${\cal A}$. Either ${\cal C}$ is a good cover set with key monic off-$E_{01}$-isomorphic to $B^\beta$ (possibly $B^\beta$ itself), or ${\cal C}$ has no members off-$E_{01}$-isomorphic to $B^\beta$ (observe that the algorithm works by testing and replacing any such monics first). In the former case, since $B^\beta$ has size at least 4 (since $m \ge 5$), then ${\cal C}$ is in fact a star cover set. Otherwise in the latter case, note that ${\cal C}$ is not necessarily good, but what is important is that ${\cal C}$ is based on ${\cal A}$ and in a suitable sense has fewer large monics. If ${\cal C}$ is a good but not a star cover set, then we may repeat the process. Otherwise, if ${\cal C}$ is a star cover set then we can stop; while if ${\cal C}$ is not good, then by Lemma~\ref{goodcoverset}, we may find a good cover set based on ${\cal C}$ using the GCSSA, and proceed using this cover set. If necessary, this process of using the GCSSA to get rid of large monics can be repeated as many times as is needed to obtain a cover set ${\cal E} \in \Lambda$ based on $\cal A$ such that each member is defined on at most $m-2$ parts. Now any good cover set based on $\cal E$ (which may be found using the GCSSA) must be a star cover set.
\end{proof}

\section{Characterization of $m$-generic graphs}

The conditions from the previous section are enough to characterize all the homogeneous coloured $m$-generic graphs. 

\begin{thm} \label{mgeneric}
Let $L$ be a language for coloured $m$-partite graphs, and let ${\cal F}$ be a family of monic $L$-graphs. Then there is a (unique) countable homogeneous $m$-generic $L$-graph $G$ with $Age(G) = Forb({\cal F})$ if and only if ${\cal F}$ satisfies the following: 
\begin{enumerate}
\item[(i)] if ${\cal A} \subseteq {\cal F}$ is a $C_{ij}^{kl}$-omission set, then there is a corresponding $C_{kl}^{ij}$-omission set  ${\cal B}$ in ${\cal F}$;
\item[(ii)] if ${\cal A} \subseteq {\cal F}$ is a $C_{ij}$-cover set, then for some distinct $k,l$ there is a $C_{ij}^{kl}$-omission set in ${\cal F}$ based on triangles from ${\cal A}$.
\end{enumerate}
\end{thm}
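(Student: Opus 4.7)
The forward direction follows directly from results already established: clause (i) is Lemma~\ref{corros}, and clause (ii) comes from combining Theorem~\ref{mgen} (which produces \emph{some} $C_{ij}^{kl}$-omission set based on any $C_{ij}$-cover set ${\cal A} \subseteq O(G)$) with Lemma~\ref{triangleomset} (which strengthens ``based on ${\cal A}$'' to ``based on triangles from ${\cal A}$''). All the substantive work is therefore in the converse, which I would prove by a direct generalization of the quadripartite argument in Theorem~\ref{4generic}.

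To show that $Forb({\cal F})$ is an amalgamation class it suffices to check two-point amalgamations. Let $A, B_1, B_2 \in Forb({\cal F})$ with $B_1 = A \cup \{x\}$, $B_2 = A \cup \{y\}$, and put $C = A \cup \{x, y\}$. If $x, y$ lie in the same part then no new edge is required and $C \in Forb({\cal F})$ automatically, since any forbidden monic realized in $C$ would already sit inside $B_1$ or $B_2$. Otherwise say $x \in V_i$, $y \in V_j$ with $i \ne j$, and suppose for contradiction that every choice of $c \in C_{ij}$ for $F(x,y)$ forces $C$ to realize some $A^c \in {\cal F}$. Since $A^c$ is monic and cannot be realized in either of $B_1, B_2$, the embedded copy of $A^c$ must use $x$ as its $V_i$-vertex and $y$ as its $V_j$-vertex; so $A^c$ has $c$-coloured $E_{ij}$-edge, and ${\cal A} := \{A^c : c \in C_{ij}\}$ is a $C_{ij}$-cover set contained in ${\cal F}$.

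Apply hypothesis (ii): there exist distinct $k, l$ and a $C_{ij}^{kl}$-omission set $S_{ij} \subseteq {\cal F}$ based on triangles from ${\cal A}$. Unpacking the definition, some $A^{c_0} \in {\cal A}$ is an $ijk$-triangle and some $A^{d_0} \in {\cal A}$ is an $ijl$-triangle. The realizations of these triangles in $C$ force their third vertices to lie in $A$, giving $v_k \in A \cap V_k$ and $v_l \in A \cap V_l$ for which $F(x, v_k), F(y, v_k), F(x, v_l), F(y, v_l)$ are precisely the four code colours of $S_{ij}$. By hypothesis (i), ${\cal F}$ contains a corresponding $C_{kl}^{ij}$-omission set $S_{kl}$ with the same code. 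Setting $e := F(v_k, v_l) \in C_{kl}$, the set $S_{kl}$ contains either the $ikl$-triangle with $e$-coloured $E_{kl}$-edge realized on $\langle x, v_k, v_l\rangle \subseteq B_1$ or the $jkl$-triangle realized on $\langle y, v_k, v_l\rangle \subseteq B_2$; either way a member of ${\cal F}$ is realized in $B_1$ or $B_2$, contradicting their membership in $Forb({\cal F})$.

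The real obstacle in proving Theorem~\ref{mgeneric} is not in this converse argument—which, as one sees, is essentially the 4-partite proof verbatim, using the strengthened clause (ii) to locate the necessary $v_k, v_l$ in $A$—but in assembling the forward direction, where the passage from a cover set to an omission set based on \emph{triangles} of that cover set requires the full machinery of Section~\ref{sectionm} (the non-complication theorem and the good cover set search algorithm used in Lemma~\ref{triangleomset}).
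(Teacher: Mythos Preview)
Your proof is correct and follows essentially the same route as the paper's: the forward direction cites the same three results, and the converse is the identical two-point amalgamation argument, with the ``based on triangles'' clause of (ii) used exactly as you do to locate the witnesses $v_k, v_l \in A$ from the realizations of the triangle members $A^{c_0}, A^{d_0}$ of the cover set. If anything, your exposition of how $v_k, v_l$ arise is slightly more explicit than the paper's, which compresses this step into the phrase ``we can in fact find such $A^c$ which make up a $C_{ij}^{kl}$-omission set.''
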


\begin{proof}
The forward direction is given by Lemma~\ref{corros}, Theorem~\ref{mgen} and Lemma~\ref{triangleomset}.

For the converse, we must show that if ${\cal F}$ satisfies conditions (i) and (ii), then $Forb({\cal F})$ is an amalgamation class. 

So take $A, B_1, B_2 \in Forb({\cal F})$ such that $A$ embeds in $B_1$ and $B_2$. We may assume that $B_1 = A \cup \{x\}, ~B_2 = A \cup \{y\}$. Now we just need to decide the colour of the edge $xy$ to form the amalgam $C = A \cup \{x,y\}$ so that no member of ${\cal F}$ is realized. As in the proof of Theorem~\ref{4generic}, the only difficult case is when $x \in V_i, y \in V_j$ and ${\cal F}$ contains a $C_{ij}$-cover set, so consider this case. 

If there is some colour $c \in C_{ij}$ such that $C$ with $F(x,y)=c$ omits all members of ${\cal F}$, then we have finished. 
Otherwise, for each $c \in C_{ij}$, some $A^c \in {\cal F}$ with $c$-coloured $E_{ij}$-edge would be realized in $C$. We aim to show that this will never happen.

Note that the set ${\cal A} := \{ A^c : c \in C_{ij} \}$ is a $C_{ij}$-cover set in ${\cal F}$. Then by (ii), for some distinct $k,l$ there is a 
$C_{ij}^{kl}$-omission set based on triangles from ${\cal A}$, so we can in fact find such $A^c$ which make up a $C_{ij}^{kl}$-omission set. 
That is, there are $v_k, v_l \in A$ such that there is a $C_{ij}^{kl}$-omission set $S_{ij} \subseteq {\cal F}$ with its 
$E_{ik}, E_{il}, E_{jk}, E_{jl}$-edges coloured as in $\langle x, y, v_k, v_l \rangle$. But then by (i), ${\cal F}$ contains a corresponding 
$C_{kl}^{ij}$-omission set $S_{kl}$. Consider the edge $v_kv_l \in A$, say $F(v_k, v_l) = d \in C_{kl}$. At least one of $\langle x, v_k, v_l \rangle$ 
or $\langle y, v_k, v_l \rangle$ with $F(v_k, v_l) = d$ is contained in $S_{kl}$. But $x, v_k, v_l \in B_1$ and $y, v_k, v_l \in B_2$, so then one of 
$B_1, B_2$ realizes a forbidden monic, which contradicts our initial assumptions. 
\end{proof}

The explicit description of the possible values of $O(G)$ given by Theorem~\ref{mgeneric} seems preferable to the general effectiveness argument 
following Theorem~\ref{nonmonic}, though in more complicated classifications, that may be all one can hope for. The classification of all the (not 
necessarily $m$-generic) homogeneous $m$-partite graphs is read off using Lemma~\ref{perfectm} as in section 2.2. It may also be possible to 
address issues of complexity; the complexity of the method presented here is rather crude (just list all possibilities, and check which of them are 
compatible with the non-complication theorem).


\begin{thebibliography}{99}
\bibitem{cherlin} 
G. Cherlin, 
The classification of countable homogeneous directed graphs and countable
$n$-tournaments, 
Mem. Amer. Math. Soc. 131 (1998), 621.

\bibitem{jenkinson1}
T.~Jenkinson, 
The construction and classification of homogeneous structures in model theory, 
PhD thesis, University of Leeds, 2006.

\bibitem{jenkinson2}
T.~Jenkinson, D.~Seidel and  J.~K.~Truss, 
Countable homogeneous multipartite graphs, 
European J.\ Combin.\ 33 (2012), 82--109. 

\bibitem{lachlan1} 
A. H. Lachlan and R. Woodrow, 
Countable ultrahomogeneous undirected graphs, Trans. Amer. Math.
Soc., 262(1): 51-94, 1980.

\bibitem{lachlan2} 
A. H. Lachlan, 
Countable homogeneous tournaments, 
Trans. Amer. Math. Soc. 284 (1984), 431-461.

\bibitem{rose} 
Simon Rose, 
Classification of countable homogeneous 2-graphs, 
PhD thesis, University of Leeds, 2011.

\bibitem{schmerl} J. H. Schmerl, 
Countable homogeneous partially ordered sets, 
Algebra Universalis, 9 (1979) 317-321.

\bibitem{seidel}
D.~Seidel, 
Classification of the countable homogeneous multipartite graphs, 
Diplomarbeit, Technische Universit\" at Freiburg, 2008.

\bibitem{torrezao} 
S. Torrez\~ao de Sousa and J. K. Truss, 
Countable homogeneous coloured partial orders,  
Dissertationes Mathematicae 455 (2008).

\end{thebibliography}
\end{document}